\newtheoremstyle{slthm}
{9pt}
{5pt}
{\slshape}
{}
{\bfseries}
{.}
{.5em}
{\thmname{#1}\thmnumber{ #2}\thmnote{ (#3)}}
\newtheoremstyle{prcl}
{9pt}
{5pt}
{\slshape}
{}
{\bfseries}
{.}
{.5em}
{\thmname{#3}\thmnumber{ #2}}
\newtheoremstyle{prblm}
{9pt}
{5pt}
{\rm}
{}
{\bfseries}
{.}
{.5em}
{\thmname{#3}\thmnumber{ #2}}
\theoremstyle{slthm}
\newtheorem{thm}{Theorem}[section]
\newtheorem{lemma}[thm]{Lemma}
\newtheorem{prop}[thm]{Proposition}
\newtheorem{cor}[thm]{Corollary}
\newtheorem{fact}[thm]{Fact}
\theoremstyle{definition}
\newtheorem{df}[thm]{Definition}
\newtheorem{rmkdf}[thm]{Remark and Definition}
\newtheorem{nrmk}[thm]{Remark}
\newtheorem{nrmks}[thm]{Remarks}
\newtheorem{expl}[thm]{Example}
\newtheorem{expls}[thm]{Examples}
\theoremstyle{remark}
\newtheorem*{rmk}{Remark}
\theoremstyle{prcl}
\theoremstyle{prblm}
\newenvironment{renumerate}
        {
         \begin{enumerate}}
        {\end{enumerate}}
\newcounter{flexnummark}
\DeclareMathOperator{\im}{Im}
\DeclareMathOperator{\re}{Re}
\DeclareMathOperator{\supp}{supp}
\DeclareMathOperator{\ord}{ord}
\DeclareMathOperator{\geom}{G_{\text{eom}}}
\DeclareMathOperator{\bexp}{\textbf{exp}}
\DeclareMathOperator{\blog}{\textbf{log}}
\newcommand{\into}{\longrightarrow}
\renewcommand{\bar}{\overline}
\def\Ind#1#2{#1\setbox0=\hbox{$#1x$}\kern\wd0\hbox to 0pt{\hss$#1\mid$\hss}
\lower.9\ht0\hbox to 0pt{\hss$#1\smile$\hss}\kern\wd0}
\def\Notind#1#2{#1\setbox0=\hbox{$#1x$}\kern\wd0\hbox to 0pt{\mathchardef
\nn=12854\hss$#1\nn$\kern1.4\wd0\hss}\hbox to
0pt{\hss$#1\mid$\hss}\lower.9\ht0 \hbox to
0pt{\hss$#1\smile$\hss}\kern\wd0}
\newcommand{\set}[1]{\left\{#1\right\}}
\newcommand{\NN}{\mathbb{N}}
\newcommand{\ZZ}{\mathbb{Z}}
\newcommand{\RR}{\mathbb{R}}
\newcommand{\CC}{\mathbb{C}}
\newcommand{\TT}{\mathbb{T}}
\newcommand{\curly}[1]{\mathcal{#1}}
\newcommand{\A}{\curly{A}}
\newcommand{\C}{\curly{C}}
\newcommand{\D}{\curly{D}}
\newcommand{\F}{\curly{F}}
\newcommand{\K}{\curly{K}}
\renewcommand{\aa}{\mathbf{a}}
\newcommand{\bb}{\mathbf{b}}
\newcommand{\cc}{\mathbf{c}}
\newcommand{\ee}{\mathbf{e}}
\newcommand{\ff}{\mathbf{f}}
\renewcommand{\gg}{\mathbf{g}}
\newcommand{\hh}{\mathbf h}
\renewcommand{\ll}{\mathbf l}
\newcommand{\mm}{\mathbf{m}}
\newcommand{\nn}{\mathbf{n}}
\newcommand{\pp}{\mathbf{p}}
\newcommand{\qq}{\mathbf{q}}
\newcommand{\xx}{\mathbf{x}}
\DeclareMathOperator{\Ranexp}{\RR_{an,exp}}
\newcommand{\Ps}[2]{\mathbb{#1}[\![#2]\!]}
\newcommand{\As}[2]{#1[\![#2]\!]}
\newcommand{\Gs}[2]{#1(\!(#2)\!)}
\numberwithin{equation}{section}
\title {Quasianalytic Ilyashenko algebras} 
\author {Patrick Speissegger}
\address {Department of Mathematics and Statistics, McMaster University, 1280
Main Street West, Hamilton, Ontario L8S 4K1, Canada}
\email {speisseg@math.mcmaster.ca}
\date{\today\ at \currenttime}
\subjclass {Primary 41A60, 30E15; Secondary 37D99, 03C99}
\keywords {Generalized series expansions, quasianalyticity, transition maps}
\thanks{Supported by NSERC Discovery Grant \#261961.  I thank Zeinab Galal, Tobias Kaiser,  Jean-Philippe Rolin and Tamara Servi for valuable discussions on these notes, and the referee for their careful reading and valuable feedback.}
\begin{document}

\begin{abstract}
I construct a quasianalytic field $\F$ of germs at $+\infty$ of real functions with logarithmic generalized power series as asymp\-totic expansions, such that $\F$ is closed under differentiation and $\log$-composition; in particular, $\F$ is a Hardy field.  Moreover, the field $\F \circ (-\log)$ of germs at $0^+$ contains all transition maps of hyperbolic saddles of planar real analytic vector fields.
\end{abstract}

\maketitle
\markboth{}{}

\section{Introduction}

In his solution of Dulac's problem, Ilyashenko \cite{Ilyashenko:1991fk} introduces the class $\A$ of germs at $+\infty$ of \textit{almost regular} functions, and he shows that this class is quasianalytic and \textbf{closed under $\log$-composition}, by which I mean the following: given $f,g \in \A$ such that $\lim_{x \to +\infty} \frac1g(x) = +\infty$, it follows that $f \circ (-\log) \circ g \in \A$.  As a consequence, $\A \circ (-\log)$ is a quasianalytic class of germs at $0^+$ that is closed under composition.  Ilyashenko also shows that if $f$ is the germ at $0^+$ of a transition map near a hyperbolic saddle of a planar real analytic vector field, then $f$ belongs to $\A \circ (-\log)$; from this, it follows that limit cycles of a planar real analytic vector field $\xi$ do not accumulate on a hyperbolic polycycle of $\xi$.  (For a discussion of Dulac's problem and related terminology used here, we refer the reader to Ilyashenko and Yakovenko \cite[Section 24]{Ilyashenko:2008fk}.  The class $\A$ also plays a role in the description of Riemann maps and solutions of Dirichlet's problem on semianalytic domains; see Kaiser \cite{MR2495077,MR2481954} for details.)  

That $\A$ is closed under $\log$-composition is due to a rather peculiar assumption built into the definition of ``almost regular'': by definition, a function $f:(a,+\infty) \into \RR$ is \textbf{almost regular} if there exist real numbers $0 \le \nu_0 < \nu_1 < \dots$ such that $\lim_i \nu_i = +\infty$, polynomials $p_i \in \RR[X]$ for each $i$ and a \textbf{standard quadratic domain} $$\Omega = \Omega_C := \set{z+C \sqrt{1+z}:\ \re z > 0} \subseteq \CC,\quad\text{with } C>0,$$ such that 
\begin{renumerate}
	\item $f$ has a bounded holomorphic extension $\ff:\Omega \into \CC$;
	\item $p_0$ is a nonzero constant and, for each $N \in \NN$, $$\ff(z) - \sum_{i=0}^N p_i(z) e^{-\nu_i z} = o\left(e^{-\nu_N z}\right)\quad\text{as } |z| \to +\infty \text{ in } \Omega.$$
\end{renumerate}

\begin{rmk}
	 For an almost regular $f$ as defined here, the function $\log \circ f$ is almost regular in the sense of \cite[Definition 24.27]{Ilyashenko:2008fk}.  
\end{rmk}

It is the assumption that $p_0$ be a nonzero constant that makes the class $\A$ closed under $\log$-composition.  However, one drawback of this assumption is that the class $\A$ is not closed under addition (because of possible cancellation of the leading terms), which makes it unamenable to study by many commonly used algebraic-geometric methods.

I show here that Ilyashenko's construction of $\A$ can be adapted, using his notion of \textit{superexact asymptotic expansion} \cite[Section 0.5]{Ilyashenko:1991fk}, to obtain a quasianalytic class $\F$ that is closed under addition and multiplication, contains $\exp$ and $\log$ and is closed under differentiation and $\log$-composition.  This construction comes at the cost of replacing the asymptotic expansions above by the following series: for $k \in \ZZ$, we denote by $\log_k$ the $k$th compositional iterate of $\log$.  Recall from van den Dries and Speissegger \cite{Dries:1998xr} that a \textbf{generalized power series} is a power series $F = \sum_{\alpha \in \RR^k} a_\alpha X^\alpha$, where $X = (X_1, \dots, X_k)$, each $a_\alpha \in \RR$ and the \textbf{support} of $F$,
\[ \supp(F):= \set{\alpha \in \RR^n:\ a_\alpha \ne 0}, \] is contained in a cartesian product of well-ordered subsets of $\RR$; the set of all generalized power series in $X$ is denoted by $\Ps{R}{X^*}$.  Moreover, we call the support of $F$ \textbf{natural} (see Kaiser et al. \cite{Kaiser:2009ud}) if, for every compact box $B \subseteq \RR^k$, the intersection of $B \cap \supp(F)$ is finite.

\begin{df}
	\label{log-power_series}
	A \textbf{logarithmic generalized power series} is a series of the form $F\left(\frac1{\log_{i_1}}, \dots, \frac1{\log_{i_k}}\right)$, where $i_1, \dots, i_k \ge -1$ and $F \in \Ps{R}{X^*}$ has natural support.
\end{df}
	
I denote by $L$ the divisible multiplicative group of all monomials of the form $(\log_{i_1})^{r_1} \cdots (\log_{i_k})^{r_k}$, with $-1 \le i_1 < \cdots < i_k$ and $r_1, \dots, r_k \in \RR$.  Note that $L$ is linearly ordered by setting $m \le n$ if and only if $\lim_{x \to +\infty} \frac{m(x)}{n(x)} \le 1$.  (In fact, $L$ is a multiplicative subgroup of the Hardy field of all germs at $+\infty$ of functions definable in the o-minimal structure $\RR_{\exp}$, see Wilkie \cite{Wilkie:1996ce}.)  Indeed, this ordering can be described as follows:  identify each $m \in L$ with a function $m:\{-1\} \cup \NN \into \RR$ in the obvious way.  Then for $m,n \in L$ we have $m<n$ in $L$ if and only if $m<n$ in $\RR^{\{-1\} \cup \NN}$ in the lexicographic ordering.

For a divisible subgroup $L'$ of $L$, I denote by $\Ps{R}{L'}$ the set of all logarithmic generalized power series with support contained in $L'$.  Note that, by definition, every series in $\Ps{R}{L'}$ has support contained in $$L ' \cap \set{(\log_{i_1})^{r_1} \cdots (\log_{i_k})^{r_k}:\ -1 \le i_1 < \cdots < i_k \text{ and } r_1, \dots, r_k \le 0}.$$  It is straightforward to see that $\Ps{R}{L'}$ is an $\RR$-algebra under the usual addition and multiplication of series, and I denote its fraction field by $\Gs{\RR}{L'}$.  (So the general series in $\Gs{\RR}{L'}$ is of the form $m F$, where $m \in L'$ and $F \in \Ps{R}{L'}$.)  This notation agrees with the usual notation for \textit{generalized series}, see for instance van den Dries et al \cite{Dries:2001kh}.  To simplify notations, I sometimes write $F \in \Gs{\RR}{L'}$ as $F = \sum_{m \in L'} a_m m$ as in \cite{Dries:2001kh}; in this situation, I call the set$$\supp(F) := \set{m \in L':\ a_m \ne 0}$$ the \textbf{support} of $F$.  Note that, under the ordering on $L'$, the set $\supp(F)$ is a reverse well-ordered subset of $L'$ of order-type at most $\omega^k$ for some $k \in \NN$.  I call $\supp(F)$ \textbf{$L'$-natural} if $\supp(F) \cap (m,+\infty)$ is finite for any $m \in L'$.

For $F = \sum_{m \in L'} a_m m \in \Gs{\RR}{L'}$ and $n \in L$, I denote by $$F_n:= \sum_{m \ge n} a_m m$$ the \textbf{truncation} of $F$ above $n$.  A subset $A \subseteq \Gs{\RR}{L'}$ is \textbf{truncation closed} if, for every $F \in A$ and $n \in L$, the truncation $F_n$ belongs to $A$.

Since the support of a logarithmic generalized power series can have order type $\omega^k$ for arbitrary $k \in \NN$, I need to make sense of what it means to have such a series as asymptotic expansion.  I do this in the context of an algebra of functions:

\begin{df}
  \label{relative_asymptotic}
  Let $\K$ be an $\RR$-algebra of germs at $+\infty$ of functions $f:(a,\infty) \into \RR$ (with $a$ depending on $f$), let $L'$ be a divisible subgroup of $L$, and let $T:\K \into \Gs{\RR}{L'}$ be an $\RR$-algebra homomorphism.  The triple $(\K,L',T)$ is a \textbf{quasianalytic asymptotic algebra} (or \textbf{qaa algebra} for short) if 
  \begin{renumerate}
    \item $T$ is injective;
    \item the image $T(\K)$ is truncation closed;
    \item for every $f \in \K$ and every $n \in L'$, we have
      \begin{equation*}
        f - T^{-1}((Tf)_n) = o(n).
      \end{equation*}
  \end{renumerate}
  In this situation, for $f \in \K$, I call $T(f)$ the \textbf{$\K$-asymptotic expansion} of $f$.
\end{df}

The result of this note can now be stated:

\begin{thm}
	\label{main_thm}
	\begin{enumerate}
	\item There exists a qaa field $(\F,L,T)$ that contains the class $\A$ as well as $\exp$ and $\log$.
	\item The field $\F$ is closed under differentiation and $\log$-composition.
	\end{enumerate}
\end{thm}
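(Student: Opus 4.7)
The approach is to adapt Ilyashenko's construction of $\A$ by replacing ordinary asymptotic expansions with \emph{superexact} ones in the sense of \cite[Section~0.5]{Ilyashenko:1991fk}, dropping the restrictive assumption that the leading term $p_0$ be a nonzero constant, and then building $\F$ in iterative stages through $\log$-composition. To begin, I would construct a base $\RR$-algebra $\H$ of germs at $+\infty$ admitting bounded holomorphic extensions to some standard quadratic domain $\Omega_C$, together with superexact asymptotic expansions $\sum_i p_i(z) e^{-\nu_i z}$ (with arbitrary polynomials $p_i \in \RR[X]$ and $0 \le \nu_0 < \nu_1 < \cdots$ tending to $+\infty$). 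Because there is no restriction on $p_0$, $\H$ is automatically closed under addition and multiplication; closure under differentiation follows by Cauchy's integral formula applied on a slightly narrower quadratic subdomain $\Omega_{C'}$ with $C' < C$, and the containment $\A \subseteq \H$ is immediate.

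Next, I would define classes $\H^{(k)}$ inductively: set $\H^{(0)} := \H$ and let $\H^{(k+1)}$ be the $\RR$-algebra generated by $\H^{(k)}$ together with all germs $f \circ (-\log) \circ g$ where $f, g \in \H^{(k)}$ and $g$ tends to $0^+$, further closed under inversion of germs whose asymptotic leading term does not vanish. The key geometric input is that $-\log$ maps a thin right-neighborhood of $0$ in $\CC$ holomorphically into a standard quadratic domain, so the composition makes sense on the holomorphic side. Asymptotic expansions in logarithmic generalized power series emerge naturally here: composing $\sum_i p_i(z) e^{-\nu_i z}$ with $-\log$ yields a series in $x^{\nu_i}$ with polynomial coefficients in $\log x$, and iterating introduces higher iterated logarithms $\log_k$. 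I would then take $\F := \bigcup_k \H^{(k)}$ and define $T:\F \into \Gs{\RR}{L}$ by reading off the asymptotic expansion; that $T$ is a well-defined $\RR$-algebra homomorphism with truncation-closed image satisfying condition (iii) of Definition \ref{relative_asymptotic} would be verified by induction along the construction, with the support of $Tf$ shown to have order type bounded by $\omega^k$ at stage $k$ and to be $L$-natural.

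The main obstacle is the injectivity of $T$, i.e.\ quasianalyticity. Suppose $f \in \F$ has $Tf = 0$; by unravelling the $\log$-composition history of $f$, this should reduce to showing that a bounded holomorphic function on some $\Omega_C$ whose superexact expansion $\sum_i p_i(z) e^{-\nu_i z}$ vanishes identically is itself zero. This is the Phragm\'en-Lindel\"of-type argument of Ilyashenko: vanishing of the expansion forces $|\ff(z)| = O(e^{-\nu |z|})$ for every $\nu \ge 0$ as $|z| \to \infty$ in $\Omega_C$, which combined with the parabolic geometry of the quadratic domain forces $\ff \equiv 0$. The genuine technical work lies in tracking, through each $\log$-composition stage, that superexact decay and the $L$-natural support condition are preserved under composition, multiplication, and inversion; in particular one must exhibit, for each stage, a new (typically shrunken) quadratic-type domain on which the composed function enjoys its own superexact expansion. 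Once injectivity is secured, closure of $\F$ under differentiation follows from the per-stage Cauchy-formula argument, closure under $\log$-composition is essentially built into the construction, and containment of $\exp$ and $\log$ is handled directly by exhibiting their superexact expansions on appropriate quadratic domains.
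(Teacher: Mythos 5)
Your strategy has the right ingredients --- strong (``superexact'') asymptotic expansions on standard quadratic domains, an iterative build-up, Phragm\'en--Lindel\"of for quasianalyticity (Fact \ref{ph-l}), and Cauchy's integral formula for closure under differentiation --- but your iteration scheme differs from the paper's in a way that leaves a real gap. The paper defines each stage $\A_k$ \emph{intrinsically}, as the set of germs with a bounded holomorphic extension and a strong asymptotic expansion $\sum_{r\ge 0} f_r \exp^{-r}$ whose coefficients $f_r$ lie in the already-built field $\F'_k := \F_{k-1}\circ\log$; because the class is characterized by the existence of such expansions, the qaa conditions (truncation closure, the $o$-estimates, injectivity of $T_k = \sigma\circ\tau_k$) either hold by definition or follow directly from Fact \ref{ph-l} together with the inductive hypothesis. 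Closure under \emph{general} $\log$-composition is then proved afterwards, as Theorem \ref{comp-closed_1}, for the already-constructed $\F$. Your $\H^{(k+1)}$ is instead defined as the algebra \emph{generated} by $\H^{(k)}$ and the germs $f\circ(-\log)\circ g$, so the analogous qaa properties --- that an arbitrary polynomial combination of these generators has a strong asymptotic expansion, that $T$ has truncation-closed image, that the $o$-estimates hold on a suitable quadratic domain --- must be \emph{proved} for the generated algebra, and you defer all of this to an unspecified ``induction along the construction.''

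That deferred verification is precisely the content of Theorem \ref{comp-closed_1}, and its hardest point is absent from your sketch. When $g\to 0$ subexponentially (equivalently $\ord(1/g)=0$, e.g.\ $1/g = \log x$), the expansion of $f\circ\log\circ(1/g)$ cannot be obtained by formal substitution of $(-\log)\circ g$ into $\sum_i p_i(z)e^{-\nu_i z}$. One has to split $\log\circ(1/g) = \log\circ g_0 + F_{\log}\circ\epsilon$ with $F_{\log}\circ\epsilon$ small, Taylor-expand $f$ at $\log\circ g_0$, regroup the resulting double series, and establish the $o$-estimates via explicit two-sided bounds for $\ff^{(n)}\circ\ll_{g_0}$ and $(F_{\log}\circ\ee)^n$ on the quadratic domain (Case 2 of Theorem \ref{comp-closed_1}); this in turn requires closure under differentiation and a formal Taylor theorem in the transseries field (Lemma \ref{formal_taylor}) to identify $T(f\circ\log\circ g)$ with $Tf\circ\log\circ Tg$. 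None of this appears in your proposal, and without it the induction does not close. A smaller but genuine problem: proving quasianalyticity ``by unravelling the $\log$-composition history of $f$'' does not work as stated, since elements of a generated algebra have non-unique representations and there is no canonical history to unravel; the paper instead gets injectivity of $T_k$ at each stage from injectivity of $\tau_k$ (via Fact \ref{ph-l} and Remark \ref{rel_growth_rmk}) combined with injectivity of $T'_k$ from the inductive hypothesis.
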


The remainder of this paper is divided into six sections: Section \ref{sqd} discusses some basic properties of standard quadratic domains, Section \ref{asymptotic_section} introduces strong asymptotic expansions, Section \ref{construction_section} contains the construction of $(\F,L,T)$, Section \ref{diff_section} contains the proof of closure under differentiation and Section \ref{comp_section} that of closure under $\log$-composition.  Finally, Section \ref{conclusion_section} contains some remarks putting this paper in a wider context.  

In Section \ref{comp_section}, I rely on the observation that $\Gs{\RR}{L}$ is a subset of the set $\TT$ of transseries as defined by van der Hoeven in \cite{Hoeven:2006qr}; I use, in particular, the fact that $\TT$ is a group under composition.

The construction of $\F$ is based on the following consequence of the Phrag\-m\'en-Lindel\"of principle \cite[Theorem 24.36]{Ilyashenko:2008fk}: 

\begin{fact}[{\cite[Lemma 24.37]{Ilyashenko:2008fk}}]
	\label{ph-l}
	Let $\Omega \subseteq \CC$ be a standard quadratic domain and $\phi:\Omega \into \CC$ be holomorphic.  
If $\phi$ is bounded and, for each $n \in \NN$,
		\begin{equation*}
		|\phi(x)| = o\left(e^{-nx}\right) \quad\text{as}\quad x \to +\infty \text{ in } \RR,
		\end{equation*}
		then $\phi = 0$.
\end{fact}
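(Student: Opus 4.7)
The plan is to reduce the problem to a Phragm\'en-Lindel\"of argument in a sectorial domain via a conformal change of variables, then to exploit the super-exponential decay of $\phi$ along the positive real axis to force $\phi$ to vanish on an open set, concluding by the identity principle. Concretely, the change of variables $w := \sqrt{1+z} + C/2$ (principal branch) satisfies the identity $z + C\sqrt{1+z} = w^2 - 1 - C^2/4$, so that $w \mapsto w^2 - 1 - C^2/4$ sends the domain $H := \set{w \in \CC :\ \re w > C/2 + \sqrt{1 + (\im w)^2}}$ biholomorphically onto $\Omega$. This $H$ is asymptotic at infinity to the open sector $S := \set{w :\ |\arg w| < \pi/4}$ of aperture $\pi/2$, and the positive real axes in $z$- and $w$-coordinates correspond past a bounded initial segment. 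Therefore the pullback $\tilde\phi(w) := \phi(w^2 - 1 - C^2/4)$ is holomorphic and bounded by $M := \sup_\Omega |\phi|$ on $H$, and satisfies $|\tilde\phi(r)| = o(e^{-nr^2})$ as $r \to +\infty$ along the positive real axis, for every $n \in \NN$.

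For each $n \in \NN$, introduce $G_n(w) := \tilde\phi(w) e^{nw^2}$, holomorphic on $H$. Fix a small $\delta > 0$ and consider the upper half-wedge $S_\delta^+ := \set{w :\ 0 < \arg w < \pi/4 - \delta}$ (contained in $H$ outside a bounded set), whose boundary consists of the positive real axis together with the ray $L := \set{w :\ \arg w = \pi/4 - \delta}$. On the real axis $|G_n(r)| \to 0$ super-exponentially; on $L$, $\re(w^2) = |w|^2 \sin(2\delta)$, whence $|G_n(w)| \leq M e^{n|w|^2 \sin(2\delta)}$; in the interior of $S_\delta^+$, $|G_n(w)| \leq M e^{n|w|^2}$, which is of growth order $2$, strictly subcritical for the wedge aperture $\pi/4 - \delta$ (critical order $\pi/(\pi/4 - \delta) > 4$). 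Applying the two-constants theorem to the subharmonic function $\log|G_n|$ on $S_\delta^+$---using that the harmonic-measure tail integral $\int_L |w|^2\, d\omega$ converges (since the critical order exceeds $2$)---yields an estimate of the form
\begin{equation*}
\log|\tilde\phi(w_0)| \leq n\bigl[\sin(2\delta)\, K_L(w_0) - \re(w_0^2)\bigr] - A_n\,\omega_{\RR_{>0}}(w_0) + O(1),
\end{equation*}
for $w_0$ in the interior of $S_\delta^+$, where $A_n \to +\infty$ with $n$ (reflecting super-exponential decay on the real axis), $K_L(w_0) < \infty$, and $\omega_{\RR_{>0}}(w_0) > 0$ is the harmonic measure of the positive real axis from $w_0$.

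For $\delta$ sufficiently small and $w_0$ close to the positive real axis, the coefficient of $n$ in the bracket is non-positive while $\omega_{\RR_{>0}}(w_0)$ is bounded away from zero; letting $n \to \infty$ forces $\tilde\phi(w_0) = 0$. Hence $\tilde\phi$ vanishes on a non-empty open subset of $H$, and the identity principle yields $\tilde\phi \equiv 0$, whence $\phi \equiv 0$. The main obstacle is the quantitative two-constants estimate: one needs both the convergence and uniform control of the harmonic-measure integrals on $L$ and a uniform positive lower bound on $\omega_{\RR_{>0}}(w_0)$ on an open neighborhood of the real axis, so that the super-exponential decay on the real axis transfers to an interior bound strong enough to force $\tilde\phi$ to vanish there.
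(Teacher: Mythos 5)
The paper does not prove this Fact: it is cited as \cite[Lemma 24.37]{Ilyashenko:2008fk} and explicitly used as a black box (see the sentence following the statement). So there is no ``paper's own proof'' to compare against, and your proposal must stand on its own. It does not, and the gap is not a matter of missing detail but of strategy.

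First, multiplying by $e^{nw^2}$ gains nothing under the two-constants theorem, because $\log\bigl|e^{nw^2}\bigr| = n\,\re(w^2)$ is harmonic of subcritical growth in your wedge. When you apply the Nevanlinna/two-constants inequality to $G_n=\tilde\phi\,e^{nw^2}$, the term $n\int_{\partial}\re(w^2)\,d\omega_{w_0}$ reproduces $n\,\re(w_0^2)$ exactly, which is subtracted off when you pass from $\log|G_n(w_0)|$ back to $\log|\tilde\phi(w_0)|$. The $n$-dependence cancels identically and you are left with the $n$-independent inequality $\log|\tilde\phi(w_0)| \le \int_{\partial}\log|\tilde\phi|\,d\omega_{w_0}$. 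Your displayed estimate with a bracket multiplying $n$ and a separate $-A_n\,\omega_{\RR_{>0}}(w_0)$ term is therefore not what the two-constants theorem produces; the approach as written yields no improvement as $n\to\infty$.

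Second, the remaining $n$-independent inequality does not close the argument either. In the wedge $S_\delta^+$ of aperture $\alpha=\pi/4-\delta$, harmonic measure on $\RR_{>0}$ seen from a fixed interior point $w_0$ has density decaying like $r^{-\pi/\alpha-1}$ as $r\to\infty$, with $\pi/\alpha>4$. Your hypothesis only gives $-\log|\tilde\phi(r)|/r^2\to+\infty$; it does not give growth faster than, say, $r^2\log r$. Then $\int^\infty (-\log|\tilde\phi(r)|)\,r^{-\pi/\alpha-1}\,dr$ can be finite, so the two-constants bound does not force $\log|\tilde\phi(w_0)|=-\infty$. Narrowing to the sub-wedge $S_\delta^+$ is exactly the wrong trade: you gain $\RR_{>0}$ as a boundary arc but pay for it with harmonic measure that decays faster than the assumed decay of $\log|\tilde\phi|$ along it, and this trade-off is unfavorable for every $\delta>0$. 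A correct proof must exploit the precise parabolic geometry of the standard quadratic domain (equivalently, after the conformal change of variables, the fact that $H$ sits at a fixed positive distance inside the critical sector $\{|\arg w|<\pi/4\}$) via the Phragm\'en--Lindel\"of theorem specific to these domains (\cite[Theorem 24.36]{Ilyashenko:2008fk}); replacing the domain by a sub-sector of strictly smaller aperture discards exactly the feature that makes the statement true.
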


Indeed, I use this consequence of the Phragm\'en-Lindel\"of principle as a black box.  I suspect that other Phragm\'en-Lindel\"of principles, such as the one found in Borichev and Volberg \cite[Theorem 2.3]{MR1353489}, might be used in a similar way to obtain other qaa algebras.

\section{Standard quadratic domains}\label{sqd}

This section summarizes some elementary properties of standard quadratic domains and makes some related conventions.  For $a \in \RR$, I set $$H(a):= \set{z \in \CC:\ \re z > a},$$ and I define $\phi_C:H(-1) \into H(-1)$ by $$\phi_C(z):= z + C\sqrt{1+z}.$$  

\begin{figure}[htb] 
\label{sqd_figure}
\begin{center}
\includegraphics[scale = 0.3]{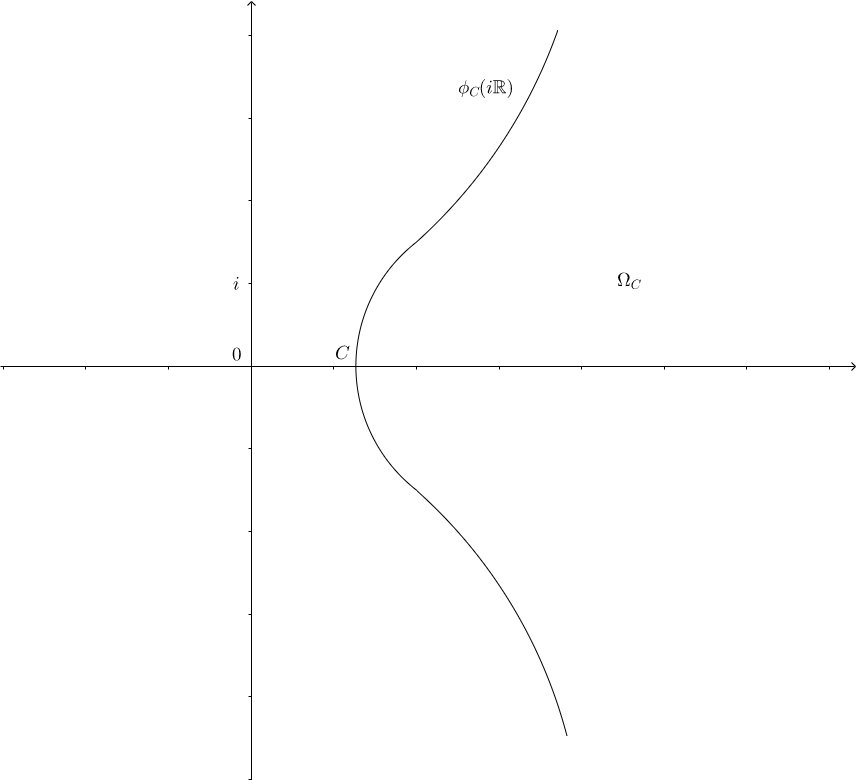} \caption{A standard quadratic domain and its boundary $\phi_C(i\RR)$} 
\end{center}
\end{figure}

I denote by $\C$ the set of all germs at $+\infty$ of continuous functions $f:\RR \into \RR$.  For $f,g \in \C$ I write $f \sim g$ if $f(x)/g(x) \to 1$ as $x \to +\infty$.

\begin{lemma}
	\label{sqd_lemma}
	Let $C>0$.
	\begin{enumerate}
		\item The map $\phi_C$ is conformal with compositional inverse $\phi_C^{-1}$ given by $$\phi_C^{-1}(z) = z + \frac{C^2}2 -C \sqrt{1+z+\frac{C^2}4};$$ in particular, the boundary of $\Omega_C$ is the set $\phi_C(i\RR)$.
		\item We have $\re\phi_C(ix) \sim \frac{C}{\sqrt 2}\sqrt{x}$ and $\im\phi_C(ix) \sim x$.
		\item There exists a continuous $f_C:[C,+\infty) \into (0,+\infty)$ such that $\im\phi_C(ix) = f_C(\re\phi_C(ix))$ for $x>0$ and  $f_C(x) \sim 2(x/C)^2$.
	\end{enumerate}
\end{lemma}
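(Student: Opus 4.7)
For part (1), my plan is to invert $\phi_C$ by algebra. Setting $w = \phi_C(z) = z + C\sqrt{1+z}$ and isolating the square root, then squaring, yields the quadratic $z^2 - (2w+C^2)z + (w^2 - C^2) = 0$. The discriminant simplifies to $C^2(w + 1 + C^2/4)$, so $z = w + C^2/2 \pm C\sqrt{1 + w + C^2/4}$, where the principal branch of the square root is used. I would then check that only the choice with the minus sign sends $H(-1)$ into $H(-1)$ (for large real $w$ the plus sign gives values with real part $\sim 2w$, not the identity-like behavior one needs) and verify $\phi_C \circ \phi_C^{-1} = \id$ directly. Conformality of $\phi_C$ follows because $\phi_C'(z) = 1 + C/(2\sqrt{1+z})$ has modulus arbitrarily close to $1$ for $\re z > 0$ and is in particular nonzero, while bijectivity $H(0) \to \Omega_C$ is witnessed by the inverse formula. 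The boundary statement then follows because a homeomorphism of closed half-planes carries boundary to boundary, so $\partial \Omega_C = \phi_C(i\RR)$.

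For part (2), I would compute the real and imaginary parts of $\sqrt{1+ix}$ for $x \ge 0$ explicitly. Writing $1+ix = \sqrt{1+x^2}\, e^{i\theta}$ with $\theta = \arctan x \in [0,\pi/2)$ and using the half-angle identities $\cos(\theta/2) = \sqrt{(1+\cos\theta)/2}$ and $\sin(\theta/2) = \sqrt{(1-\cos\theta)/2}$, one obtains the clean expressions
\begin{equation*}
\re\sqrt{1+ix} = \sqrt{\tfrac{\sqrt{1+x^2}+1}{2}}, \qquad \im\sqrt{1+ix} = \sqrt{\tfrac{\sqrt{1+x^2}-1}{2}}.
\end{equation*}
Both are $\sim \sqrt{x/2}$ as $x \to +\infty$. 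Since $\phi_C(ix) = C\re\sqrt{1+ix} + i\bigl(x + C\im\sqrt{1+ix}\bigr)$, the stated asymptotics $\re\phi_C(ix) \sim \frac{C}{\sqrt 2}\sqrt{x}$ and $\im\phi_C(ix) \sim x$ are immediate.

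For part (3), I would use the explicit formula from part (2) to see that $x \mapsto \re\phi_C(ix) = C\sqrt{(\sqrt{1+x^2}+1)/2}$ is continuous and strictly increasing on $[0,+\infty)$, taking the value $C$ at $x=0$ and tending to $+\infty$. Hence it admits a continuous, strictly increasing inverse $g_C : [C,+\infty) \to [0,+\infty)$, and I define $f_C(u) := \im\phi_C(ig_C(u))$, which is continuous and positive on $(C,+\infty)$ (at $u = C$ one has $g_C(C) = 0$ so $f_C(C) = 0$, which is handled by starting the interval slightly above $C$ or by noting that the statement only requires positivity for $x>0$). The asymptotic is then a two-step chase: from part (2), $u = \re\phi_C(ig_C(u)) \sim \frac{C}{\sqrt 2}\sqrt{g_C(u)}$ forces $g_C(u) \sim 2(u/C)^2$, and then $f_C(u) = \im\phi_C(ig_C(u)) \sim g_C(u) \sim 2(u/C)^2$.

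The only place where care is needed is the branch selection in part (1); everything else is direct computation. I would present the inverse formula, verify it, and then let parts (2) and (3) flow from the half-angle formulas for $\sqrt{1+ix}$.
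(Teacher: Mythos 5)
The paper leaves this proof to the reader, and your direct computation is the intended elementary argument; the inverse formula, half-angle expressions for $\sqrt{1+ix}$, and the two-step asymptotic chase in (3) are all correct.

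Two small remarks. In part (1), the claim that the wrong root has real part $\sim 2w$ for large real $w$ is not right (the plus-sign candidate is $w + C^2/2 + C\sqrt{1+w+C^2/4} \sim w + C\sqrt w$); the cleaner way to discard it is to observe that for that candidate $w - z_{+}$ is negative for large real $w$, whereas the defining relation $w - z = C\sqrt{1+z}$ requires it to have positive real part, so $\phi_C(z_+) \ne w$. The conclusion you draw is unaffected. In part (3), you correctly notice that $f_C(C)=0$, so the codomain in the statement should really be $[0,+\infty)$ rather than $(0,+\infty)$; the identity $\im\phi_C(ix)=f_C(\re\phi_C(ix))$ and the asymptotic $f_C(x)\sim 2(x/C)^2$ hold exactly as you argue, and this endpoint value is never used later in the paper.
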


\begin{proof}
	These observations are elementary and left to the reader.
\end{proof}

Figure \ref{sqd_figure} shows a standard quadratic domain with its boundary $\phi_C(i\RR)$.
From now on, I denote by $\phi_C$ the restriction of $\phi_C$ to the closed right half-plane $\bar{H(0)}$.

Two domains $\Omega, \Delta \subseteq H(0)$ are \textbf{equivalent} if there exists $R>0$ such that $\Omega \cap D(R) = \Delta \cap D(R)$, where $D(R):= \set{z:\ |z| > R}$.  The corresponding equivalence classes of domains in $H(0)$ are called \textbf{germs at $\infty$} of domains in $H(0)$.  If clear from context, we shall not explicitely distinguish between a domain in $H(0)$ and its germ at $\infty$.

For $A \subseteq \CC$ and $\epsilon > 0$, let $$T(A,\epsilon):= \set{z \in \CC:\ d(z,A) < \epsilon}$$ be the \textbf{$\epsilon$-neighbourhood} of $A$.

\subsection*{Convention.}  Given a standard quadratic domain $\Omega$ and a function $g:\RR \into \RR$ that has a holomorphic extension on $\Omega$, I will usually denote this extension by the corresponding boldface letter $\gg$.  I also write $\bexp$ and $\xx$ for the holomorphic extensions on $\Omega$ of $\exp$ and the identity function $x$, respectively, and $\blog$ for the principal branch of $\log$ on $\Omega$.  Thus, every $m \in L$ has a unique holomorphic extension $\mm$ on $\Omega$.  (Strictly speaking, these extensions depend on $\Omega$, but I do not indicate this dependence.)

\begin{lemma}
	\label{sqd_facts}
	Let $C>0$.  The following inclusions hold as germs at $\infty$ in $H(0)$:
	\begin{enumerate}
		\item for $D > C$ and $\epsilon > 0$, we have $T(\Omega_D,\epsilon) \subseteq \Omega_C$;
		\item for $\nu>0$, we have $$\nu \cdot \Omega_C \subseteq \begin{cases} \Omega_{\nu C} &\text{if } \nu \le 1, \\ \Omega_C &\text{if } \nu \ge 1; \end{cases}$$ 
		\item for any standard quadratic domain $\Omega$,  we have $\blog(\Omega_C) \subseteq \Omega$;
		\item we have $\Omega_C + \Omega_C \subseteq \Omega_C$.  
	\end{enumerate}
\end{lemma}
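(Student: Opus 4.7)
The plan is to derive all four inclusions from a single representation: by Lemma \ref{sqd_lemma}(3), for each $C > 0$, as a germ at $\infty$ the domain $\Omega_C$ coincides with
\[
\set{x+iy \in H(0):\ |y| < f_C(x)}, \qquad f_C(x) \sim 2(x/C)^2.
\]
From this I record two consequences I use repeatedly: first, $C < C'$ implies $\Omega_{C'} \subseteq \Omega_C$ as germs at $\infty$, since then $f_{C'}(x) < f_C(x)$ for large $x$; and second, the vertical gap $f_C(x) - f_{C'}(x) \sim 2x^2(1/C^2 - 1/C'^2)$ tends to $+\infty$.

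For (1), I fix any $C' \in (C,D)$; since $\Omega_{C'} \subseteq \Omega_C$, it suffices to show $T(\Omega_D,\epsilon) \subseteq \Omega_{C'}$ as germs. Given $z' = z_0 + \delta$ with $z_0 \in \Omega_D$ and $|\delta| < \epsilon$, I bound $|\im z'| \le f_D(\re z_0) + \epsilon$ and compare with $f_{C'}(\re z') \sim f_{C'}(\re z_0)$; the diverging gap $f_{C'}(\re z_0) - f_D(\re z_0)$ swamps $\epsilon$ for large $|z_0|$. Note that $\re z_0 \to +\infty$ automatically as $|z_0| \to \infty$ in $\Omega_D$, since the constraint $|\im z_0| < f_D(\re z_0)$ forces $\re z_0$ to blow up whenever $\im z_0$ does.

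For (2), given $z = x+iy \in \Omega_C$ large, I compare $\nu|y|$ with the appropriate half-width: when $\nu \le 1$, $f_{\nu C}(\nu x) \sim 2(\nu x)^2/(\nu C)^2 = 2x^2/C^2 \sim f_C(x)$, and $\nu|y| \le |y| < f_C(x)$; when $\nu \ge 1$, $f_C(\nu x) \sim \nu^2 \cdot 2x^2/C^2 \sim \nu^2 f_C(x) \ge \nu f_C(x) > \nu|y|$. For (3), $\Omega_C \subseteq H(0)$ gives $|\im \blog(z)| = |\Arg z| < \pi/2$ while $\re \blog(z) = \log|z| \to +\infty$, and any standard quadratic domain $\Omega$ eventually contains the strip $\set{x+iy:\ x > M,\ |y| < \pi/2}$, so $\blog(\Omega_C) \subseteq \Omega$. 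For (4), for $z_j = x_j + iy_j \in \Omega_C$ large (hence $x_j > 0$),
\[
|y_1+y_2| \le |y_1|+|y_2| < f_C(x_1) + f_C(x_2) \sim \frac{2(x_1^2+x_2^2)}{C^2} \le \frac{2(x_1+x_2)^2}{C^2} \sim f_C(x_1+x_2),
\]
which places $z_1+z_2 \in \Omega_C$.

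The only delicate step common to all four items is converting the $\sim$-comparisons into strict inequalities on a cofinal portion of each domain. In every case, the strict hypothesis ($D > C$, $\nu \ne 1$, or $x_1, x_2 > 0$) supplies the margin needed to absorb the $o(1)$ error hidden in $f_C(x) = 2(x/C)^2(1 + o(1))$. I do not expect any essential obstacle beyond this routine bookkeeping.
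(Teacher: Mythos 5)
Your treatment of parts (1)--(3) is essentially the same as the paper's: describe $\Omega_C$, as a germ, by its graph-boundary $\set{x+iy:\ |y|<f_C(x)}$ with $f_C(x)\sim 2(x/C)^2$ and compare boundary heights. In each of those three cases the comparison carries a \emph{multiplicative} margin (the relevant ratio of heights converges to a limit strictly less than $1$: $(C/D)^2$ in (1), $\nu^{\mp1}$ for $\nu\ne1$ in (2), and $0$ for the strip in (3)), so the $o(1)$ factors hidden in the $\sim$'s cannot reverse the strict inequality, and your bookkeeping closes up.

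Part (4) has a genuine gap, and the closing claim that ``the strict hypothesis $x_1,x_2>0$ supplies the margin'' is not justified there. For the germ inclusion the hypothesis is only that $|z_1+z_2|$ is large, which forces $\max(x_1,x_2)\to\infty$; $\min(x_1,x_2)$ may stay bounded near $C$. In that regime the margin in your chain is the \emph{additive} quantity $\frac{2(x_1+x_2)^2}{C^2}-\frac{2(x_1^2+x_2^2)}{C^2}=\frac{4x_1x_2}{C^2}=O\left(\max(x_1,x_2)\right)$, while the errors buried in $f_C(x_1)+f_C(x_2)\sim\frac{2(x_1^2+x_2^2)}{C^2}$ and $\frac{2(x_1+x_2)^2}{C^2}\sim f_C(x_1+x_2)$ are only controlled to order $o\left(\max(x_1,x_2)^2\right)$. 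Indeed a continuous $f$ with $f(x)=\frac{2x^2}{C^2}\left(1+\frac{\sin(\pi x/C)}{\sqrt{x}}\right)$ satisfies $f\sim 2(x/C)^2$ yet has $f(x_1)+f(C)>f(x_1+C)$ along a sequence $x_1\to\infty$, so the displayed asymptotic of Lemma~\ref{sqd_lemma}(3) alone does not imply (4). The repair is to sharpen the asymptotic: a direct computation from $\phi_C(ix)=ix+C\sqrt{1+ix}$ gives $f_C(x)=\frac{2x^2}{C^2}+x+O(1)$, whence $f_C(x_1+x_2)-f_C(x_1)-f_C(x_2)=\frac{4x_1x_2}{C^2}+O(1)\ge\frac{4\max(x_1,x_2)}{C}+O(1)$, which is positive once $|z_1+z_2|$ is large. (For what it is worth, the paper's own proof of (4) --- translating $\Omega_C$ by a fixed boundary point $a$ and comparing boundary curves --- is likewise stated only at the level of the first-order $\sim$ and needs the same kind of refinement, together with uniformity in $a$.)
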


\begin{proof}
	(1) follows from Lemma \ref{sqd_lemma}(3).
	
	(2) follows from Lemma \ref{sqd_lemma}(3) and the equality $$\nu\cdot(x, 2(x/C)^2) = (\nu x, 2(\nu x/\sqrt{\nu}C)^2)$$ in $\RR^2$.
	
	(3) Note that $\blog(H(0) \cap \{|z| > 1\}) = H(0) \cap \set{|\im z| < \pi/2}$.
	
	(4) Note first that, for $a \in \CC$ with $\re a \ge 0$, the boundary of $a + \Omega_C$ in $\set{z \in \CC:\ \im z \ge \im a}$, viewed as a subset of $\RR^2$, is the graph of a function $f_{a,C}:[C + \re a,+\infty) \into [\im a,+\infty)$ such that $$f_{a,C}(x) \sim \im a + \left(\frac{x- \re a}{C}\right)^2.$$  In particular, if $a \in \partial \Omega_C$, then $a = b + if_C(b)$ for some $b \ge C$; therefore, $$f_{a,C}(x) \sim \frac {b^2 + (x-b)^2}{C^2} < f_C(x)$$ in $\C$, which proves the claim.
\end{proof}

The following is the main reason for working with standard quadratic domains.

\begin{lemma}
	\label{exp_on_sqd}
	Let $C>0$ and set $K:= \frac{C}{\sqrt 3}$.  There exists $k \in (0,1)$ depending on $C$ such that $$k\exp\left(K\sqrt{|z|}\right) \le \left|\bexp(z)\right| \le \exp(|z|)$$ for $z \in \Omega_C$.
\end{lemma}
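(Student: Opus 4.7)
The upper bound is immediate since $|\bexp(z)| = e^{\re z} \le e^{|z|}$, so the plan is to concentrate on the lower bound, which amounts to showing that $\re z - K\sqrt{|z|}$ is bounded below on $\Omega_C$. The key quantitative inequality I would aim to establish is
\[ |z| \le \frac{2(\re z)^2}{C^2} + \sqrt{2}\,\re z \quad\text{for } z \in \Omega_C. \]
This says, roughly, that points of $\Omega_C$ cannot be much farther from $0$ than the asymptotic shape $|z|\sim 2(\re z)^2/C^2$ suggested by Lemma \ref{sqd_lemma}(3).

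To prove this inequality, I would parametrize $z = \phi_C(w) = w + C\sqrt{1+w}$ with $w = u + iv$, $u \ge 0$, and set $r := |1+w|$. First the triangle inequality and the observation $|w|^2 = r^2 - 2u - 1 \le r^2$ give $|z| \le r + C\sqrt{r}$. Next, from the half-angle formula $\re\sqrt{1+w} = \sqrt{(r+1+u)/2}$ one has
\[ \re z = u + C\sqrt{(r+1+u)/2}, \]
from which $(\re z)^2 \ge C^2(r+1+u)/2 \ge C^2 r/2$, so $r \le 2(\re z)^2/C^2$. Substituting into $|z| \le r + C\sqrt{r}$ yields the desired bound.

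With this in hand, set $x := \re z \ge 0$. Then $K\sqrt{|z|} \le K\sqrt{2x^2/C^2 + \sqrt{2}\,x} = (K\sqrt{2}/C)\sqrt{x^2 + (C^2/\sqrt{2})\,x}$. Completing the square gives $\sqrt{x^2 + bx} \le x + b/2$ for $x \ge 0$, $b \ge 0$, so
\[ K\sqrt{|z|} \le \tfrac{K\sqrt{2}}{C}\Bigl(x + \tfrac{C^2}{2\sqrt{2}}\Bigr) = \sqrt{\tfrac{2}{3}}\,x + \tfrac{C^2}{2\sqrt{3}}, \]
using $K = C/\sqrt{3}$. Therefore $\re z - K\sqrt{|z|} \ge (1 - \sqrt{2/3})\,x - C^2/(2\sqrt{3}) \ge -C^2/(2\sqrt{3})$, and exponentiating gives
\[ |\bexp(z)| = e^{\re z} \ge e^{-C^2/(2\sqrt{3})}\,e^{K\sqrt{|z|}}. \]
So $k := e^{-C^2/(2\sqrt{3})} \in (0,1)$ works.

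The only step that requires any care is the bound $|z| \le 2(\re z)^2/C^2 + \sqrt{2}\,\re z$; the rest is algebraic manipulation. The choice $K = C/\sqrt{3}$ appears precisely because $K\sqrt{2}/C = \sqrt{2/3} < 1$, giving the necessary slack between the dominant term $\sqrt{2/3}\,x$ in $K\sqrt{|z|}$ and the term $x = \re z$. No separate treatment of the boundary versus the interior is needed, since the parametrization $z = \phi_C(w)$ covers $\Omega_C$ uniformly.
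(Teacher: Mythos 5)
Your proof is correct, and it takes a somewhat different route from the paper's. The paper first reduces to the boundary curve: on each circle $C_r$ of radius $r$, the minimum of $|\bexp|$ over $\Omega_C \cap C_r$ is attained at the boundary point $z(r) = x(r) + i f_C(x(r))$, and the asymptotic $f_C(x) \sim 2(x/C)^2$ from Lemma~\ref{sqd_lemma}(3) gives $r \sim \frac{2}{C^2}x(r)^2$, whence $x(r) \ge K\sqrt{r}$ for large $r$ (any $K < C/\sqrt{2}$ works, and $C/\sqrt{3}$ is one such choice); the bounded range of $r$ is then absorbed into $k$. You instead work directly with the parametrization $z = \phi_C(w)$ on the closed half-plane and prove the clean global inequality $|z| \le \frac{2(\re z)^2}{C^2} + \sqrt{2}\,\re z$, from which the lower bound follows by elementary algebra with the explicit constant $k = e^{-C^2/(2\sqrt{3})}$. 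Both arguments rest on the same geometric fact — that $\Omega_C$ is roughly parabolic, so $\re z \gtrsim \sqrt{|z|}$ — but yours is quantitative where the paper's is asymptotic: you avoid the implicit compactness step at the end and produce an explicit $k$, which the paper does not. The paper's version is shorter because it leans on the already-recorded asymptotics of $f_C$; yours is self-contained and arguably more transparent about where the constant $\sqrt{3}$ comes from (it is exactly the slack needed to make $K\sqrt{2}/C < 1$). No gaps; the computation of $\re\sqrt{1+w} = \sqrt{(r+1+u)/2}$ and the estimates $|w| \le r$, $r \le 2(\re z)^2/C^2$ are all sound.
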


\begin{proof}
	Let $C>0$ be such that $\Omega = \Omega_C$ and, for $r>0$, denote by $C_r$ the circle with center 0 and radius $r$.  Since $|\bexp(x+iy)| = \exp x$, the point in $\Omega \cap C_r$ where $|\bexp z|$ is maximal is $z = r$.  On the other hand, the point $z(r)  = x(r)+iy(r)$ in $\Omega \cap C_r$ where $|\bexp z|$ is smallest lies on the boundary of $\Omega_r$, so that $y(r) = f_C(x(r))$.  It follows from Lemma \ref{sqd_lemma}(3) that $$r = \sqrt{x(r)^2 + f_C(x(r))^2} \sim x(r)^2 \sqrt{\frac1{x(r)^2} + \frac{4}{C^4}}.$$  Hence $x(r) \ge K\sqrt r$ for all sufficiently large $r \in \RR$, as required.
\end{proof}

\subsection*{Convention} Given an unbounded domain $\Omega \subseteq H(0)$ and holomorphic $\phi,\psi:\Omega \into \CC$, I write $$\psi = o(\phi) \quad\text{in } \Omega$$ if $|\psi(z)/\phi(z)| \to 0$ as $|z| \to \infty$ in $\Omega$.  
\medskip

The reason why the notion of qaa algebra makes sense for the set of monomials $L$ is that, for $m,n \in L$, we have $m < n$ if and only if $m = o(n)$.  This equivalence remains true on standard quadratic domains:
 
\begin{lemma}
	\label{strong_order_cor}
	Let $m,n \in L$ be such that $m < n$, and let $\Omega$ be a standard quadratic domain.  Then
	$\mm = o(\nn)$ in $\Omega$. 
\end{lemma}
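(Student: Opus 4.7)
My plan is to reduce the statement to an asymptotic bound on the holomorphic extension $\pp$ of the monomial $p := m/n \in L$ on $\Omega$. Since $L$ is an ordered group and $m<n$, we have $p<1$ in $L$. Writing $p = \log_{i_1}^{r_1}\cdots\log_{i_k}^{r_k}$ with $-1 \le i_1 < \cdots < i_k$ and all $r_j \ne 0$, the description of the order on $L$ as lexicographic in the exponent vector forces $r_1 < 0$, so the statement reduces to showing that $\pp(z) = \prod_{j=1}^k \blog_{i_j}(z)^{r_j}$ tends to $0$ as $|z|\to\infty$ in $\Omega$.

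To this end I would collect size estimates on $\Omega = \Omega_C$. The proof of Lemma~\ref{exp_on_sqd} yields a $K>0$ such that $\re z \ge K\sqrt{|z|}$ for all sufficiently large $|z|$ in $\Omega$, so $|\bexp(z)| = e^{\re z} \ge e^{K\sqrt{|z|}}$; and a short induction on $i\ge 1$, using $\log|w| \le |\blog(w)| \le \log|w| + \pi/2$ valid on $\{\re w \ge 0,\ |w|\ge 1\}$, gives
\begin{equation*}
\log_i|z| \;\le\; |\blog_i(z)| \;\le\; \log_i|z| + O(1) \qquad\text{as } |z|\to\infty \text{ in } \Omega.
\end{equation*}
Using these with $r_1<0$, the leading factor $|\blog_{i_1}(z)|^{r_1}$ is bounded above by $e^{-|r_1|K\sqrt{|z|}}$ if $i_1=-1$, by $|z|^{r_1}$ if $i_1=0$, and by $(\log_{i_1}|z|)^{r_1}$ if $i_1\ge 1$; and each subsequent factor ($j\ge 2$) is bounded by $(\log_{i_j}|z|+O(1))^{|r_j|}$, after choosing the one-sided estimate according to the sign of $r_j$. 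In each of the three cases for $i_1$, the leading decay strictly dominates the product of the subsequent growths, so $|\pp(z)|\to 0$: $e^{-c\sqrt{|z|}}$ beats any polynomial in $|z|$; a negative power of $|z|$ beats any power of $\log|z|$; and a negative power of $\log_{i_1}|z|$ beats any power of $\log_{i_j}|z|$ for $i_j>i_1$, since the latter is an iterated logarithm of the former.

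I do not expect any substantial obstacle: the argument is essentially the real-variable fact that the ordering of $L$ is controlled by the leading exponent, transferred to $\Omega$ by the crude size estimates above. The only step where the specific geometry of a standard quadratic domain (rather than a mere half-plane) enters is the lower bound $\re z \ge K\sqrt{|z|}$ from Lemma~\ref{exp_on_sqd}, which is precisely what makes the $i_1=-1$ case go through.
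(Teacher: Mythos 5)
Your proposal is correct and follows essentially the same route as the paper: reduce to a single monomial $p = m/n < 1$ via the group structure of $L$, use the lower bound $\re z \gtrsim \sqrt{|z|}$ from Lemma~\ref{exp_on_sqd} together with two-sided estimates $\log_i|z| \le |\blog_i z| \le \log_i|z| + O(1)$ obtained by induction (the paper uses the equivalent multiplicative bound $|\blog_i z| \le C\log_i|z|$), and conclude from the lexicographic characterization of the order on $L$ that the leading factor's decay dominates. You spell out the case split on $i_1$ that the paper's proof leaves implicit in its final sentence, but the ingredients and structure are the same.
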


\begin{rmk}
	While $\exp^{-1} < x^{-1}$ in $L$, we have $\bexp^{-1} \neq o(\xx^{-1})$ in $H(0)$ (or indeed in any right half-plane).
\end{rmk}

\begin{proof}
	First, let $z \in H(0)$ with $|z| \ge e$.  Then $$1 \le \log|z| = \re(\blog z) \le |\blog z|$$ and, since $\im(\blog z) \in (-\frac\pi2,\frac\pi2)$, we also have $$|\blog z| \le 3\log|z|.$$
	
	Second, define $e_0:= 1$ and, for $k > 0$, we set $e_k:= e^{e_{k-1}}$.  It follows from (1), by induction on $k \in \NN$, that if $z \in H(0)$ with $|z| \ge e_k$, there exists $C = C(k) > 0$ such that $$0 \le \log_k|z| \le |\blog_k z| \le C \log_k|z|.$$
	
	The previous two observations, together with Lemma \ref{exp_on_sqd} and the characterization of the ordering of $L$ given in the introduction, imply that if $m \in L$ is such that $m < 1$, then $\mm = o(\mathbf 1)$ in $\Omega$.  Since $L$ is a multiplicative group, the lemma follows.
\end{proof}

\section{Strong asymptotic expansions}\label{asymptotic_section}

Set $E:= \set{\exp^r:\ r \in \RR}.$  Note that $E$ is co-initial in $L$; in particular, a series $F \in \Gs{\RR}{E}$ has $E$-natural support if and only if it has $L$-natural support.

\begin{df}
	\label{asymptotic_def}
	Let $f\in \C$ and $F = \sum f_r \exp^{-r} \in \Gs{\C}{E}$.  The germ $f$ has \textbf{strong asymptotic expansion $F$ (at $\infty$)} if 
	\begin{renumerate}
		\item $F$ has $E$-natural support;
		\item $f$ has a holomorphic extension $\ff$ on some standard quadratic domain $\Omega$;
		\item each $f_r$ has a holomorphic extension $\ff_r$ on $\Omega$ such that $\ff_r = o(\bexp^{s})$ in $\Omega$, for each $s>0$;
		\item for each $r \in \RR$, we have 
		\begin{equation}
		\label{strong_asymptotics}\tag{$\ast_{f,r}$}
		\ff - \sum_{s \le r} \ff_s \bexp^{-s} = o\left(\bexp^{-r}\right) \quad\text{in } \Omega.
		\end{equation}
	\end{renumerate}
	In this situation, $\Omega$ is called a \textbf{strong asymptotic expansion domain} of $f$.  
\end{df}	

\begin{expl}
	\label{A0-rmk}
	Let $f \in \C$ be almost regular with asymptotic expansion $F:= \sum_{n=0}^\infty p_n \exp^{-\nu_n}$ as defined in the introduction.  Then $F$ is a strong asymptotic expansion of $f$.
	
	To see this, let $r \in \RR$; Condition \eqref{strong_asymptotics} holds by definition if $r = \nu_N$ for some $N \in \NN$, so  assume that $\nu_{N-1} < r < \nu_N$ for some $N$ (setting $\nu_{-1}:= -\infty$ to make sense of all cases).  The definition of ``almost regular'' implies that
	\begin{equation*}
	\ff - \sum_{\nu_n \le r} \pp_n \bexp^{-\nu_n} - \pp_N \bexp^{-\nu_N} = o\left(\bexp^{-\nu_N}\right) \quad\text{in } \Omega.
	\end{equation*}
	Condition \eqref{strong_asymptotics} now follows, because $|z| \to \infty$ in $\Omega$ implies $\re z \to +\infty$, so that $\qq\bexp^{-\nu_N} = o\left(\bexp^{-r}\right)$ in $\Omega$, for every polynomial $q$.
\end{expl}

\begin{rmk}
	\label{asym_expansion_rmk}
	Let $f \in \C$ have strong asymptotic expansion $F \in \Gs{\C}{E}$, and let $s \in \RR$.  Then $f\cdot \exp^s$ has strong asymptotic expansion $F\cdot \exp^s$.
\end{rmk}

\begin{lemma}
	\label{closure_lemma}
	Let $f,g \in \C$ have strong asymptotic expansions $\sum a_s \exp^{-s}$ and $\sum b_s \exp^{-s}$, respectively, in a standard quadratic domain $\Omega$.  Then
	\begin{enumerate}
		\item $f+g$ has strong asymptotic expansion $\sum (a_s+b_s) \exp^{-s}$ in $\Omega$;
		\item $fg$ has strong asymptotic expansion $\left(\sum a_s \exp^{-s}\right) \left(\sum b_s \exp^{-s}\right)$ in $\Omega$;
		\item if $f=0$ and $s_0:= \min\{s \in \RR:\ a_s \ne 0\}$, then there exists $r>0$ such that  $\aa_{s_0} = o\left(\bexp^{-r}\right)$ in $\Omega$.
	\end{enumerate}
\end{lemma}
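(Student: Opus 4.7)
My plan is to handle the three parts in increasing order of difficulty, each following essentially from unpacking Definition \ref{asymptotic_def}. For (1), the argument is linear: the union $\supp(F) \cup \supp(G)$ is $E$-natural because the property of having finite intersection with each $(-\infty, t]$ is preserved by finite unions; each coefficient $a_s + b_s$ lies in $\C$ with holomorphic extension $\aa_s + \bb_s = o(\bexp^\eta)$ for every $\eta > 0$; and adding condition \eqref{strong_asymptotics} for $f$ and for $g$ yields it for $f+g$.

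For (2), which is the main work, I would first dispatch the formal data. The support $\supp(FG) \subseteq \supp(F) + \supp(G)$ is $E$-natural by the standard fact that the Minkowski sum of two well-ordered subsets of $\RR$ with the finite-lower-segment property retains both. Each $c_s = \sum_{u+v=s} a_u b_v$ is then a finite sum in $\C$ with holomorphic extension $\cc_s = o(\bexp^\eta)$ for every $\eta > 0$. For the estimate \eqref{strong_asymptotics}, fix $r \in \RR$ and pick $r' > r + 1 - \min(\inf \supp F,\ \inf \supp G)$. I write $\ff = P_f + R_f$ with $P_f := \sum_{u \in \supp F,\ u \le r'} \aa_u \bexp^{-u}$ a finite partial sum and $R_f = o(\bexp^{-r'})$ (by \eqref{strong_asymptotics} applied to $f$ at level $r'$), and similarly $\gg = P_g + R_g$. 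Expanding $\ff\gg = P_f P_g + P_f R_g + R_f P_g + R_f R_g$, I rewrite $P_f P_g = \sum_s \big(\sum_{u+v=s,\ u,v \le r'} \aa_u \bb_v\big) \bexp^{-s}$; the choice of $r'$ forces the inner sum to equal $\cc_s$ for every $s \le r$, while each tail term with $s > r$ is of the form $o(\bexp^{\eta - s}) \subseteq o(\bexp^{-r})$ upon taking $\eta < s - r$. Each summand $\aa_u \bexp^{-u} R_g$ of $P_f R_g$ is $o(\bexp^{\eta - u - r'})$ for every $\eta > 0$, which by the choice of $r'$ lies in $o(\bexp^{-r})$; $R_f P_g$ is symmetric, and $R_f R_g = o(\bexp^{-2r'}) \subseteq o(\bexp^{-r})$.

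For (3), I apply \eqref{strong_asymptotics} to $f = 0$ to obtain $\sum_{s \in \supp F,\ s \le r} \aa_s \bexp^{-s} = o(\bexp^{-r})$ for every $r \in \RR$. Since $\supp F$ is well-ordered, either $s_0$ has an immediate successor $s_1 > s_0$ in $\supp F$ or $\supp F = \{s_0\}$; in either case I can choose $r > s_0$ with $(s_0, r] \cap \supp F = \emptyset$. The sum then collapses to $\aa_{s_0} \bexp^{-s_0} = o(\bexp^{-r})$, so $\aa_{s_0} = o(\bexp^{-(r-s_0)})$, with $r - s_0 > 0$ playing the role of the $r$ in the statement. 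The anticipated main obstacle is the bookkeeping in (2): $r'$ must be chosen carefully in terms of both $r$ and the minimal exponents of the two supports so that all four pieces of the product expansion uniformly fit inside $o(\bexp^{-r})$.
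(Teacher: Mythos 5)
For parts (1) and (3) your arguments match the paper's, so those are fine. Part (2) is where you genuinely diverge: the paper first normalizes by multiplying $f$ and $g$ by $\exp^s$ with $s\le 0$ (via Remark \ref{asym_expansion_rmk}) so that both supports lie in $(0,\infty)$, which makes $\ff$, $\gg$ and all partial sums \emph{bounded} on $\Omega$; it then applies the telescoping identity $\ff\gg - \sum_{s\le r}\cc_s\bexp^{-s} = (\ff - P_f)\gg + P_f(\gg - P_g) + \bigl(P_fP_g - \sum_{s\le r}\cc_s\bexp^{-s}\bigr)$ with $P_f,P_g$ truncated at $r$ itself, boundedness carrying the first two estimates. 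You instead avoid normalization, over-truncate at a larger threshold $r'$, and expand $\ff\gg = P_fP_g + P_fR_g + R_fP_g + R_fR_g$. That is a legitimate alternative route.

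However, there is a genuine gap at the $R_fR_g$ term. The inclusion $o(\bexp^{-2r'})\subseteq o(\bexp^{-r})$ requires $2r'\ge r$, and your constraint $r' > r+1-\min(\inf\supp F,\inf\supp G)$ does not guarantee it. Concretely, take $\inf\supp F = \inf\supp G = 10$ and $r=5$: the constraint allows $r'=-3$, whence $P_f=P_g=0$, $R_f=\ff$, $R_g=\gg$, and $(\ast_{f,-3})$, $(\ast_{g,-3})$ give only $R_fR_g = o(\bexp^{6})$, which is nowhere near $o(\bexp^{-5})$ since $\re z\to+\infty$ on $\Omega$. (Your bounds on $P_fR_g$ and $R_fP_g$ do go through, because there the factor $\bexp^{-u}$ with $u\ge\inf\supp F$ compensates.) The fix is cheap — additionally require $r'\ge\max(r,0)$, or simply stipulate that $r'$ be chosen sufficiently large — but as written the $R_fR_g$ step fails.
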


\begin{proof}
	Fix $r \ge 0$. Then in $\Omega$,
	\begin{align*}
		\ff+\gg &- \sum_{s \le r} (\aa_s+\bb_s)\bexp^{-s} \\&= \left( \ff - \sum_{s \le r} \aa_s \bexp^{-s} \right) + \left( \gg - \sum_{s \le r} \bb_s \bexp^{-s} \right) \\&= o\left(\bexp^{-r}\right),
	\end{align*}
	which proves (1).  For (2), write $$\sum c_s \exp^{-s} = \left(\sum a_s \exp^{-s}\right) \left(\sum b_s \exp^{-s}\right),$$ so that $c_s = \sum_{s_1 + s_2 = s} a_{s_1}b_{s_2}.$  By the remark before this lemma, after replacing $f$ and $g$ by $f\exp^s$ and $g\exp^s$ for some $s \le 0$, I may assume that $a_s = b_s = 0$ for $s \le 0$; then $\ff$ and $\gg$, as well as $\aa_s\bexp^{-s}$ and $\bb_s\bexp^{-s}$ for each $s$, are bounded in $\Omega$.  Since
	\begin{align*}
		\ff\gg- &\sum_{s \le r} \cc_s \bexp^{-s} = \left(\ff - \sum_{s \le r} \aa_s \bexp^{-s}\right)\gg + \\&\quad+ \left(\sum_{s \le r} \aa_s \bexp^{-s}\right)\left(\gg - \sum_{s \le r} \bb_s \bexp^{-s}\right)+ \\&\quad+\left(\sum_{s \le r} \aa_s \bexp^{-s}\right)\left(\sum_{s \le r} \bb_s \bexp^{-s}\right) - \sum_{s \le r} \cc_s \bexp^{-s},
	\end{align*}
	it follows that the first and second of these four summands are $o\left(\bexp^{-r}\right)$ in $\Omega$.  As to the third and fourth summands, 
	\begin{multline*}
		\left(\sum_{s \le r} \aa_s \exp^{-s}\right)\left(\sum_{s \le r} \bb_s \bexp^{-s}\right) - \sum_{s \le r} \cc_s \bexp^{-s} \\= \left(\sum_{s \le r} \aa_s \bexp^{-s}\right)\left(\sum_{s \le r} \bb_s \bexp^{-s}\right) - \sum_{s_1+s_2 \le r} \aa_{s_1}\bb_{s_2} \bexp^{-s_1-s_2} \\ = \sum_{s_1, s_2 \le r \atop s_1+s_2>r} \aa_{s_1}\bb_{s_2} \bexp^{-s_1-s_2},
	\end{multline*}
	which is $o\left(\bexp^{-rx}\right)$ in $\Omega$, because the latter sum is finite.
	
	For (3) set $s_1:= \min\{s>s_0:\ a_s \ne 0\} > s_0$.  Then Condition \eqref{strong_asymptotics}, with $r:= \frac12(s_0+s_1)$, implies that $\aa_{s_0} \bexp^{-s_0} = o(\bexp^{-r})$ in $\Omega$, so that $\aa_{s_0} = o\left(\bexp^{-(r-s_0)}\right)$.
\end{proof}

For $F = \sum_{r \in \RR} f_r \exp^{-r} \in \Gs{\C}{E}$, I set $$\ord(F):= \min\set{r \in \RR:\ f_r \ne 0}.$$
Recall that, given series $F_n \in \Gs{\C}{E}$ for $n \in \NN$ such that $\ord(F_n) \to +\infty$ as $n \to \infty$, the infinite sum $\sum_n F_n$ defines a series in $\Gs{\C}{E}$.  The next criterion is useful for obtaining strong asymptotic expansions.

\begin{lemma}
	\label{inf_sum_of_strong}
	Let $f \in \C$ and $f_n \in \C$, for $n \in \NN$, and let $\Omega$ be a standard quadratic domain.  Assume that each $f_n$ has strong asymptotic expansion $F_n \in \Gs{\C}{E}$ in $\Omega$ such that $\ord(F_n) \to +\infty$ for $n \in \NN$, and assume that $f$ has a holomorphic extension $\ff$ on $\Omega$ such that $$\ff - \sum_{i=0}^n \ff_i = o(\ff_n) \quad\text{in } \Omega, \text{ for each } n.$$  Then the series $\sum_n F_n$ is a strong asymptotic expansion of $f$ in $\Omega$.
\end{lemma}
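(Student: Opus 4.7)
The plan is to write each $F_n = \sum_{s \in \RR} f_{n,s} \exp^{-s}$ and assemble $F := \sum_n F_n = \sum_s g_s \exp^{-s}$ with $g_s := \sum_n f_{n,s}$. Since $\ord(F_n) \to +\infty$, for each $r \in \RR$ only finitely many $F_n$ contribute any exponent $\le r$; combined with the $E$-naturality of each $\supp(F_n)$, this shows that $F$ has $E$-natural support and that each $g_s$ is a \emph{finite} sum of germs in $\C$. Condition (iii) of Definition \ref{asymptotic_def} then follows by taking $\gg_s := \sum_n \ff_{n,s}$, which is a finite sum of holomorphic functions on $\Omega$ each satisfying the required growth bound $o(\bexp^t)$ for every $t > 0$. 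Condition (ii) is given, so the only substantive point is the asymptotic estimate \eqref{strong_asymptotics} for $f$.

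To verify this, I would fix $r \in \RR$ and use $\ord(F_n) \to +\infty$ to pick $N$ with $\ord(F_n) > r$ for all $n \ge N$. For such $n$, every term of $F_n$ has exponent strictly greater than $r$, so the strong asymptotic expansion of $f_n$ at level $r$ collapses to $\ff_n = o\left(\bexp^{-r}\right)$ in $\Omega$; in particular $\ff_N = o\left(\bexp^{-r}\right)$. I would then decompose
\begin{align*}
\ff - \sum_{s \le r} \gg_s \bexp^{-s} &= \ff - \sum_{n=0}^N \sum_{s \le r} \ff_{n,s} \bexp^{-s} \\
&= \left(\ff - \sum_{n=0}^N \ff_n\right) + \sum_{n=0}^N \left(\ff_n - \sum_{s \le r} \ff_{n,s} \bexp^{-s}\right),
\end{align*}
where the first equality uses that $\ff_{n,s} = 0$ whenever $n \ge N$ and $s \le r$. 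The hypothesis gives $\ff - \sum_{n=0}^N \ff_n = o(\ff_N)$, which combined with $\ff_N = o\left(\bexp^{-r}\right)$ yields $o\left(\bexp^{-r}\right)$; each of the finitely many summands in the remaining sum is $o\left(\bexp^{-r}\right)$ by the strong asymptotic expansion of $f_n$; so the whole right-hand side is $o\left(\bexp^{-r}\right)$, as required.

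The one subtle point I expect is precisely the passage from the hypothesis $\ff - \sum_{i=0}^n \ff_i = o(\ff_n)$, which is only a \emph{relative} estimate against $\ff_n$, to the \emph{absolute} bound $o\left(\bexp^{-r}\right)$ demanded by Definition \ref{asymptotic_def}. The key is that $\ord(F_n) \to +\infty$ together with the strong asymptotic expansion of each $f_n$ automatically forces $\ff_n$ itself to decay faster than $\bexp^{-r}$ once $\ord(F_n) > r$; choosing $N$ accordingly converts the relative estimate into the absolute one, which is the only non-formal ingredient of the argument.
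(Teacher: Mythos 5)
Your proof is correct and takes essentially the same route as the paper's: fix $r$, choose $N$ with $\ord(F_n) > r$ for $n \ge N$, use this to collapse the truncated tail, decompose the error into the hypothesis term plus the finitely many errors of $f_0,\dots,f_N$, and convert the relative estimate $o(\ff_N)$ into the absolute one via $\ff_N = o(\bexp^{-r})$. The only difference is cosmetic — you explicitly check $E$-naturality of the assembled series and the growth bound on the coefficients, which the paper leaves implicit.
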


\begin{proof}
	Let $r \in \RR$, and choose $N \in \NN$ such that $\ord(F_n) >r$ for all $n \ge N$.  Then $\ff_n = o(\bexp^{-r})$ in $\Omega$, for $n \ge N$, so $$\ff - \sum_{i=0}^n \ff_i = o(\bexp^{-r}) \quad\text{in }\Omega.$$  Increasing $N$ if necessary, we may assume that $$\left(\sum_{i=0}^\infty F_i\right)_{\exp^{-r}} = \sum_{i=0}^N (F_i)_{\exp^{-r}}.$$  Therefore, with $\hh_r$ the holomorphic extension of $\left(\sum F_i\right)_{\exp^{-r}}$ on $\Omega$ and $\hh_{i,r}$ the holomorphic extension of $(F_i)_{\exp^{-r}}$ on $\Omega$, we get
	\begin{align*}
	\ff - \hh_r &= \ff - \sum_{i=0}^N \hh_{i,r} \\
	&= \left( \ff - \sum_{i=0}^N \ff_i \right) + \sum_{i=0}^N \left(\ff_i - \hh_{i,r}\right) \\ &= o(\bexp^{-r}) \quad\text{in } \Omega,
	\end{align*}
	as required.
\end{proof}

To extend the notion of strong asymptotic expansion to series in $\Gs{\RR}{L}$, I proceed as in Definition \ref{relative_asymptotic}:

\begin{df}
	\label{strong_qaa_algebra}
	Let $\K \subseteq \C$ be an $\RR$-algebra, let $L'$ be a divisible subgroup of $L$, and let $T:\K \into \Gs{\RR}{L'}$ be an $\RR$-algebra homomorphism.  We say that the triple $(\K,L',T)$ is a \textbf{strong qaa algebra} if 
	\begin{renumerate}
		\item $T$ is injective;
		\item the image $T(\K)$ is truncation closed;
		\item for every $f \in \K$, there exists a standard quadratic domain $\Omega$ such that $f$ and each $g_n:= T^{-1}((Tf)_n)$, for $n \in L'$, have holomorphic extensions $\ff$ and $\gg_n$ on $\Omega$, respectively, that satisfy
		\begin{equation}\label{strong_asymptotic_relation}
		\ff - \gg_n = o(\nn) \quad\text{in } \Omega.
		\end{equation}
	\end{renumerate}
	In this situation, I call $T(f)$ the \textbf{strong $\K$-asymptotic expansion} of $f$ and $\Omega$ a \textbf{strong $\K$-asymptotic expansion domain} of $f$.
\end{df}

\begin{lemma}
	\label{strong_full_asymptotics}
	Let $(\K,L',T)$ be a strong qaa algebra, with $L'$ a divisible subgroup of $L$.  Then $(\K,L,T)$ is a strong qaa algebra. 
\end{lemma}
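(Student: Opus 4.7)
Conditions (i) and (ii) of Definition~\ref{strong_qaa_algebra} for $(\K, L, T)$ follow immediately from those for $(\K, L', T)$: injectivity of $T$ is unchanged by enlarging the codomain, and the notion of truncation-closedness of $T(\K)$ is already formulated over all $n \in L$ rather than just $n \in L'$. Thus the entire content of the lemma lies in upgrading condition (iii) from $n \in L'$ to arbitrary $n \in L$: given $f \in \K$, with $\Omega$ the standard quadratic domain furnished by the hypothesis (so that $\ff$ and every $\gg_{f,m}$ for $m \in L'$ extend holomorphically to $\Omega$ with $\ff - \gg_{f,m} = o(\mm)$), my goal will be to show that for each $n \in L$ the germ $g_n := T^{-1}((Tf)_n)$ extends holomorphically to $\Omega$ and satisfies $\ff - \gg_n = o(\nn)$ in $\Omega$.

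Setting $F := Tf$ and $S := \supp(F) \subseteq L'$, I would first dispose of the trivial case $S \cap (-\infty, n) = \emptyset$ (which includes $F = 0$), where $F_n = F$, $g_n = f$, and $\ff - \gg_n = 0$. In the remaining case, reverse-well-orderedness of $S$ produces a maximum $m_1$ of $S \cap (-\infty, n)$ lying in $S \subseteq L'$, and by maximality $S \cap [m_1, n) = \{m_1\}$; this yields the series identity
\[
F_n = F_{m_1} - a_{m_1}\, m_1.
\]
Each summand on the right lies in $T(\K)$, both by truncation-closedness of $T(\K)$ (applied at $n$ and $m_1$) and ring closure, so applying the $\RR$-algebra isomorphism $T^{-1}\colon T(\K) \to \K$ yields $g_n = g_{f,m_1} - a_{m_1}\, m_1$ as germs, where the monomial $m_1$ is identified with $T^{-1}(m_1) \in \K$. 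Since $\gg_{f,m_1}$ extends to $\Omega$ by hypothesis and $\mm_1$ is automatically defined on any standard quadratic domain, the function $\gg_{f,m_1} - a_{m_1}\mm_1$ on $\Omega$ is then the desired holomorphic extension of $g_n$.

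The concluding estimate reads
\[
\ff - \gg_n = (\ff - \gg_{f,m_1}) + a_{m_1}\mm_1 = o(\mm_1) + O(\mm_1) = O(\mm_1) \quad\text{in } \Omega,
\]
and Lemma~\ref{strong_order_cor}, applied to $m_1 < n$ in $L$, gives $\mm_1 = o(\nn)$ in $\Omega$, so $\ff - \gg_n = o(\nn)$ as required. The step I expect to be the main obstacle is the identification of $T^{-1}(m_1) \in \K$ with the monomial germ $m_1$, which is needed in order to write $\gg_n$ concretely on the same $\Omega$ as $f$ rather than on some a~priori unrelated domain obtained by applying the hypothesis to $g_n$ separately; this identification is automatic under the natural ambient assumption $L' \subseteq \K$ implicit in the paper's constructions, and can in any case be extracted from the rigidity of the strong asymptotic condition together with injectivity of $T$.
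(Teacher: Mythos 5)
Your proof is correct and takes essentially the same approach as the paper's. The paper likewise reduces to condition (iii), disposes of the trivial case $n \le \supp(Tf)$, and then picks the maximal $p \in \supp(Tf)$ with $p < n$ (your $m_1$), writes $\ff - \gg_n = (\ff - \gg_p) + a\pp = o(\pp) + O(\pp)$, and concludes via Lemma~\ref{strong_order_cor} that $\pp = o(\nn)$. The only minor deviation is that the paper treats $n \in L'$ as a separate trivial case, whereas you absorb it into the main argument, which is harmless. Your closing caveat is well-placed: the identification of $T^{-1}(m_1)$ with the monomial germ $m_1$ (equivalently, that $g_n$ extends holomorphically to the \emph{same} domain $\Omega$ rather than an a~priori unrelated one) is used tacitly in the paper's proof as well, and is justified there by the ambient fact that the monomials of $L'$ lie in $\K$ with $T$ acting as the identity on them in all the algebras the lemma is applied to; I would be more cautious than your final clause, though, since this identification does \emph{not} follow from injectivity of $T$ and the asymptotic condition alone, as there need not exist an $m \in L'$ strictly between $m_1$ and $n$ to pin down the growth of $T^{-1}(m_1)$.
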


\begin{proof}
	Let $f \in \K$ and $n \in L$; if $n \in L'$, then the asymptotic relation \eqref{strong_asymptotic_relation} holds by assumption, so assume $n \notin L'$.  If $n \le \supp(Tf)$, then $T^{-1}((Tf)_n) = f$, so the asymptotic relation \eqref{strong_asymptotic_relation} holds trivially.  So assume also that $n \not\le \supp(Tf)$ and choose the maximal $p \in \supp(Tf)$ such that $p<n$ (which exists because $\supp(Tf)$ is reverse well-ordered).  By assumption, writing $\gg_p$ and $\gg_n$ for the holomorphic extensions of $T^{-1}((Tf)_p)$ and $T^{-1}((Tf)_n)$, respectively,
	\begin{equation*}
	o(\pp) = \ff - \gg_p = \ff - \gg_n - a\pp,
	\end{equation*}
	for some nonzero $a \in \RR$.  Since $\pp = o(\nn)$ in $\Omega$ by Lemma \ref{strong_order_cor}, the asymptotic relation \eqref{strong_asymptotic_relation} follows.
\end{proof}

\section{The construction}\label{construction_section}

\subsection*{The initial Ilyashenko algebra}

In view of Fact \ref{ph-l} and in the spirit of \cite[Section 24]{Ilyashenko:2008fk}, I  define $\A_0$ to be the set of all $f \in \C$ that have a strong asymptotic expansion $F = \sum_{r \ge 0} a_r \exp^{-r} \in \Gs{\RR}{E}$.  Note that the condition $\supp(F) \subseteq [0,+\infty)$ implies that $f$ has a \textit{bounded} holomorphic extension to some standard quadratic domain.

\begin{lemma}
	\label{algebra_cor}
	\begin{enumerate}
		\item $\A_0$ is an $\RR$-algebra.
		\item Each $f \in \A_0$ has a unique strong asymptotic expansion $T_0f \in \Gs{\RR}{E}$.
		\item The map $T_0:\A_0 \into \Gs{\RR}{E}$ is an injective $\RR$-algebra homomorphism.  
	\end{enumerate}
\end{lemma}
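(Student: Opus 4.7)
The overall plan is to dispatch (1), (2), (3) in that order, using Lemma \ref{closure_lemma} as the main engine and Fact \ref{ph-l} to close out injectivity. A preliminary point I will use throughout: for $f,g \in \A_0$ with strong asymptotic expansion domains $\Omega_{C_f}$ and $\Omega_{C_g}$, Lemma \ref{sqd_facts}(1) yields $\Omega_C \subseteq \Omega_{C_f} \cap \Omega_{C_g}$ with $C := \max(C_f, C_g)$, and all the asymptotic relations restrict to this common $\Omega_C$.

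For (1), given $f,g \in \A_0$ with expansions $F$ and $G$ restricted to a common $\Omega_C$, Lemma \ref{closure_lemma}(1)(2) immediately provides strong asymptotic expansions $F+G$ and $FG$ for $f+g$ and $fg$ in $\Omega_C$. The only thing to check is that $F+G$ and $FG$ genuinely lie in $\Gs{\RR}{E}$ with support in $[0,+\infty)$: real coefficients are preserved by sum and Cauchy-style product; $\supp(F+G) \subseteq \supp F \cup \supp G$ and $\supp(FG) \subseteq \supp F + \supp G$ stay inside $[0,+\infty)$; and $E$-naturality is preserved both under union and under Minkowski sum of natural subsets of $[0,+\infty)$, since only finitely many $(a,b) \in \supp F \times \supp G$ can satisfy $a+b \le N$. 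Scalar multiplication is trivial.

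For (2), assume $F, F' \in \Gs{\RR}{E}$ are two strong asymptotic expansions of the same $f \in \A_0$, and restrict both to a common domain $\Omega$. Applying Lemma \ref{closure_lemma}(1) to $f - f = 0$, the constant $0$ has strong asymptotic expansion $G := F - F'$ in $\Omega$. If $G \ne 0$, let $s_0 := \min \supp G$, with real coefficient $a_{s_0} \ne 0$; Lemma \ref{closure_lemma}(3) then produces some $r>0$ with $\aa_{s_0} = o(\bexp^{-r})$ in $\Omega$. But $\aa_{s_0}$ is the nonzero constant $a_{s_0}$, while $|\bexp^{-r}(z)| = e^{-r \re z} \to 0$ as $|z| \to \infty$ in $\Omega$, a contradiction. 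Thus $F = F'$, and $T_0 : \A_0 \into \Gs{\RR}{E}$ is well defined.

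For (3), the $\RR$-algebra-homomorphism property of $T_0$ follows from (2) together with Lemma \ref{closure_lemma}(1)(2), plus the obvious behaviour of scalar multiplication. For injectivity, suppose $T_0 f = 0$: then on some standard quadratic domain $\Omega$ the extension $\ff$ satisfies $\ff = o(\bexp^{-r})$ for every $r \ge 0$. Taking $r = 0$ (or noting, as after the definition of $\A_0$, that $\supp(T_0 f) \subseteq [0,+\infty)$ already forces a bounded extension) shows $\ff$ is bounded in $\Omega$, and taking $r = n \in \NN$ restricted to the positive real axis gives $|\ff(x)| = o(e^{-nx})$ as $x \to +\infty$. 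Fact \ref{ph-l} then forces $\ff \equiv 0$, hence $f = 0$. This Phragmén--Lindelöf step is the one genuinely analytic ingredient and is the main obstacle in spirit; everything else is bookkeeping on top of Lemma \ref{closure_lemma}.
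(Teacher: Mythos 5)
Your proof is correct and follows essentially the same route as the paper: Lemma~\ref{closure_lemma}(1,2) for the algebra structure and the homomorphism property, Lemma~\ref{closure_lemma}(3) for uniqueness (reducing two expansions of the same germ to an expansion of $0$, whose leading real coefficient must then vanish), and Fact~\ref{ph-l} for injectivity. The only additions are bookkeeping the paper leaves implicit, namely passing to a common standard quadratic domain and checking that supports of sums and products remain in $[0,+\infty)$ and $E$-natural.
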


\begin{proof}
	Part (1) follows from Lemma \ref{closure_lemma}(1,2).  For part (2), assume for a contradiction that $0$ has a nonzero strong asymptotic expansion $\sum a_r \exp^{-r} \in \Gs{\RR}{E}$ of order $s_0$.  Then by Lemma \ref{closure_lemma}(3), we have $a_{s_0} = o\left(\exp^{-r}\right)$ for some $r>0$; since $a_{s_0} \in \RR$, it follows that $a_{s_0} = 0$, a contradiction.  For part (3), the map $T_0$ is a homomorphism by Lemma \ref{closure_lemma}(1,2), and its kernel is trivial by Fact \ref{ph-l}.
\end{proof}

\begin{cor}
	\label{closure_cor}
	The triple $\left(\A_0,L,T_0\right)$ is a strong qaa algebra.  	
\end{cor}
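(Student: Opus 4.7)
The plan is to verify the three conditions of Definition \ref{strong_qaa_algebra} for the triple $(\A_0, E, T_0)$, where $E = \set{\exp^r : r \in \RR}$ is evidently a divisible subgroup of $L$, and then invoke Lemma \ref{strong_full_asymptotics} to upgrade from $E$ to $L$. Injectivity of $T_0$ is already Lemma \ref{algebra_cor}(3). The asymptotic condition (iii) for $n = \exp^{-s} \in E$ is essentially a restatement of Definition \ref{asymptotic_def}(iv): for $f \in \A_0$ with strong asymptotic expansion $T_0 f = \sum_{r \ge 0} a_r \exp^{-r}$ in a standard quadratic domain $\Omega$, the preimage $g_n := T_0^{-1}((T_0 f)_n)$ is the finite linear combination $\sum_{r \le s,\, a_r \ne 0} a_r \exp^{-r}$, and condition (iv) applied with $r = s$ gives $\ff - \gg_n = o(\bexp^{-s}) = o(\nn)$ in $\Omega$.

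The main step is truncation closedness (condition (ii)). Given $f \in \A_0$ and $n \in L$, I would use the lexicographic description of the ordering on $L$ to analyze $\supp(T_0 f) \cap \set{m \in L : m \ge n}$. Since every element of $\supp(T_0 f)$ has the form $\exp^{-r} = \log_{-1}^{-r}$ with $r \ge 0$, a short case analysis on the sign of the $\log_{-1}$-coordinate of $n$ (and, if that coordinate is zero, on whether $n \le 1$) shows that this intersection is either empty, equal to $\supp(T_0 f)$, or of the form $\set{\exp^{-r} : a_r \ne 0,\, r \le s}$ for some $s \ge 0$. In the last case, naturality of $\supp(T_0 f)$ makes the intersection finite, so $(T_0 f)_n$ is a finite $\RR$-linear combination of monomials $\exp^{-r}$. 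Each such $\exp^{-r}$ trivially lies in $\A_0$ with $T_0(\exp^{-r}) = \exp^{-r}$, so since $\A_0$ is an $\RR$-algebra and $T_0$ an $\RR$-algebra homomorphism by Lemma \ref{algebra_cor}, it follows that $(T_0 f)_n \in T_0(\A_0)$.

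Having verified that $(\A_0, E, T_0)$ is a strong qaa algebra, Lemma \ref{strong_full_asymptotics} immediately yields the corollary. I do not anticipate any genuine obstacle here: the real content is already contained in Lemma \ref{algebra_cor} and Definition \ref{asymptotic_def}, and what remains is essentially bookkeeping with the lexicographic order on $L$ and the definition of truncation.
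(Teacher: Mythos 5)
Your proposal is correct and follows the same route as the paper: verify that $(\A_0,E,T_0)$ is a strong qaa algebra (injectivity from Lemma \ref{algebra_cor}, the asymptotic condition directly from Definition \ref{asymptotic_def}, truncation closedness from $E$-naturality of supports plus the fact that each $\exp^{-r}$ with $r\ge 0$ lies in $\A_0$), then apply Lemma \ref{strong_full_asymptotics} to pass from $E$ to $L$. The only difference is that you make explicit the case analysis on the $\exp$-coordinate of $n\in L$, which the paper compresses into the single remark that every truncation is a finite $\RR$-linear combination of the $\exp^{-r}$.
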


\begin{proof}
By Lemma \ref{strong_full_asymptotics}, it suffices to show that $\left(A_0,E,T_0\right)$ is a strong qaa algebra.  For $r \ge 0$ the function $\exp^{-r}$ has a bounded holomorphic extension on $H(0)$, so it belongs to $\A_0$ with $T_0 \exp^{-r} = \exp^{-r}$.  Since the support of $T_0 f$, for $f \in \A_0$, is $E$-natural, every truncation of $T_0 f$ is an $\RR$-linear combination of $\exp^{-r}$, for various $r\ge0$, and therefore belongs to $\A_0$ as well.
\end{proof}
	
\begin{expls}\label{A00-expls}
	\begin{enumerate}
		\item Let $p \in \Ps{R}{X^*}$ be convergent with natural support \cite{Dries:1998xr,Kaiser:2009ud}.  Then $p \circ \exp^{-1} \in \A_0$.
		\item The algebra $\A_0 \circ (-\log)$ is the class $\A_1 = \A_1^{1,0}$ considered in \cite[Definition 5.4]{Kaiser:2009ud}.  In particular, for $f \in \A_0$ the series $$T_0(f) \circ (-\log) \in \Ps{R}{X^*}$$ has natural support and, for $r \ge 0$ and $g_r:= T_0^{-1}\left(T_0(f)\right)_{\exp^{-r}}$, we have
		$f(-\log x) - g_r(-\log x) = o(x^r)$ as $x \to 0^+$.
	\end{enumerate}
\end{expls}

\subsection*{The initial Ilyashenko field}

For  $f \in \A_0$, I set $$\ord(f):= \ord\left(T_0(f)\right).$$
Below, I call $f \in \C$ \textbf{infinitely increasing} if $f(x) \to +\infty$, \textbf{small} if $f(x) \to 0$ and a \textbf{unit} if $f(x) \to 1$, as $x \to +\infty$.

Similarly, let $G \in \Gs{\RR}{L}$, and let $g \in L$ be the leading monomial of $G$; so there are nonzero $a \in \RR$ and $\epsilon \in \Gs{\RR}{L}$ such that $G = ag(1+\epsilon)$. Note that the leading monomial of $\epsilon$ is small.  I call $G$ \textbf{small} if $g$ is small, and I call $G$ \textbf{infinitely increasing} if both $g$ is infinitely increasing and $a>0$.

\begin{rmkdf}
	\label{series_comp_1}
	Let $G \in \Gs{\RR}{L}$, and let $g \in L$ be the leading monomial of $G$; so there are nonzero $a \in \RR$ and small $\epsilon \in \Gs{\RR}{L}$ such that $G = ag(1+\epsilon)$.  Let also $k \in \{-1\} \cup \NN$ and $F \in \Gs{\RR}{(X_{-1}, \dots, X_k)^*}$ be such that $F$ has natural support and $$G = F\left(\frac1\exp, \frac1{\log_0}, \dots, \frac1{\log_k}\right).$$  Let $\alpha = (\alpha_{-1}, \dots, \alpha_k) \in \RR^{2+k}$ be the minimum of the support of $F$ with respect to the lexicographic ordering on $\RR^{2+k}$, so that $$g = \exp^{-\alpha_{-1}}\log_0^{-\alpha_0} \cdots \log_k^{-\alpha_k}.$$
	\begin{itemize}
		\item[Case 1:] Let $P \in \Ps{R}{X^*}$ be of natural support, and assume that $G$ is small.  Then $\alpha > (0, \dots, 0)$ in the lexicographic ordering of $\RR^{2+k}$.  
		\item[Case 2:] Let $P \in \Ps{R}{\left(\frac1X\right)^*}$ be of natural support, and assume that $G$ is infinitely increasing.  Then $\alpha < (0, \dots, 0)$ in the lexicographic ordering of $\RR^{2+k}$.  
	\end{itemize} 
	In both cases, $P \circ F$ belongs to $\Gs{\RR}{(X_{-1}, \dots, X_k)^*}$ and has natural support as well.  I therefore define $$P \circ G := (P \circ F)\left(\frac1\exp, \frac1{\log_0}, \dots, \frac1{\log_k}\right).$$
	This composition is associative in the following sense: whenever $P \in \Ps{R}{X^*}$ is small and of natural support and $Q \in \Ps{R}{X^*}$ is of natural support, then $Q \circ (P \circ G) = (Q \circ P) \circ G$.  A similar statement holds in Case 2; as usual, I will therefore simply write $Q \circ P \circ G$ for these compositions.
\end{rmkdf}

\begin{lemma}
	\label{construction_lemma_1}
	Let $f,g \in \A_0$, and set $d:= \ord(g) \ge 0$.
	\begin{enumerate}
		\item There exist unique nonzero $g_{d} \in \RR$ and $\epsilon \in \A_0$ such that $$g = g_{d} \exp^{-d} (1 - \epsilon)$$ and $\ord(\epsilon) > 0$.
		\item Assume that $g$ is small with strong asymptotic expansion domain $\Omega$, and let $P \in \Ps{R}{X}$ be convergent.  Then $P \circ g$ belongs to $\A_0$, has strong asymptotic expansion domain $\Omega$ and satisfies $T_0(P \circ g) = P \circ T_0(g)$.
	\end{enumerate}
\end{lemma}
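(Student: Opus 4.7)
The plan for part (1) is to read off $g_d$ directly from the leading coefficient of the strong asymptotic expansion of $g$. Writing $T_0(g) = \sum_{r \ge d} a_r \exp^{-r}$ with $a_d \ne 0$, I set $g_d := a_d$. Since $\ord(T_0(g)) = d$, the support of $T_0(g) \cdot \exp^d$ lies in $[0, +\infty)$, so Remark \ref{asym_expansion_rmk} shows that $g\exp^d \in \A_0$ with strong asymptotic expansion $\sum_{s \ge 0} a_{s+d}\exp^{-s}$, whose constant term is $g_d$. Consequently, $\epsilon := 1 - g_d^{-1}g\exp^d$ belongs to $\A_0$ with strong asymptotic expansion $-\sum_{s>0}(a_{s+d}/g_d)\exp^{-s}$ of order $>0$ (with the convention $\ord(0) = +\infty$). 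Rearranging yields $g = g_d \exp^{-d}(1-\epsilon)$. Uniqueness of $g_d$ follows by comparing constant terms of the strong asymptotic expansion of $g\exp^d$ in any such factorization, and uniqueness of $\epsilon$ then follows from the algebraic identity $\epsilon = 1 - g\exp^d/g_d$ (together with Lemma \ref{algebra_cor}(3) if we ever want to recover this at the level of series).

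For part (2), let $P = \sum_{n \ge 0} p_n X^n$ have radius of convergence $R > 0$, set $d' := \ord(g) > 0$, and let $\Omega$ be a strong asymptotic expansion domain of $g$. The defining relation $\gg - a_{d'}\bexp^{-d'} = o(\bexp^{-d'})$ in $\Omega$, combined with the lower bound $|\bexp(z)| \ge k\exp(K\sqrt{|z|})$ from Lemma \ref{exp_on_sqd}, forces $\gg(z) \to 0$ as $|z| \to \infty$ in $\Omega$. Passing to an equivalent germ of standard quadratic domain (still denoted $\Omega$), I may assume $|\gg(z)| < R/2$ throughout $\Omega$, so that $\ff(z) := \sum_{n \ge 0} p_n \gg(z)^n$ converges uniformly on $\Omega$ and defines a holomorphic extension of $P \circ g$ on $\Omega$. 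Setting $F_n := p_n T_0(g)^n$, an iterated application of Lemma \ref{closure_lemma}(2) shows that each $f_n := p_n g^n$ lies in $\A_0$ with strong asymptotic expansion $F_n$ in $\Omega$; since $\ord(F_n) = nd'$ whenever $p_n \ne 0$, the orders tend to $+\infty$, and the formal sum $P \circ T_0(g) := \sum_n F_n$ is a well-defined element of $\Gs{\RR}{E}$ with support in $[0,+\infty)$.

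The central step is then to verify that $P \circ T_0(g)$ really is a strong asymptotic expansion of $\ff$ in $\Omega$. Fixing $r > 0$, I choose $N \in \NN$ with $Nd' > r$. The analytic tail estimate
\[
\ff - \sum_{n=0}^{N-1} p_n \gg^n \;=\; \sum_{n \ge N} p_n \gg^n \;=\; O\!\left(\gg^N\right) \;=\; o\!\left(\bexp^{-r}\right) \quad\text{in } \Omega
\]
(using the uniform bound $|\gg| < R/2$ and Lemma \ref{exp_on_sqd} to compare $\gg^N$ with $\bexp^{-r}$) combines with the observation that $(F_n)_{\exp^{-r}} = 0$ for $n \ge N$, so that the truncation $(P \circ T_0(g))_{\exp^{-r}}$ is precisely $\sum_{n=0}^{N-1}(F_n)_{\exp^{-r}}$, and with the strong asymptotic expansion relations $p_n\gg^n - T_0^{-1}((F_n)_{\exp^{-r}})\!\upharpoonright_\Omega = o(\bexp^{-r})$ for $n < N$. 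Adding these finitely many relations yields the required $\ff - T_0^{-1}\bigl((P\circ T_0(g))_{\exp^{-r}}\bigr)\!\upharpoonright_\Omega = o(\bexp^{-r})$ in $\Omega$, and hence $P \circ g \in \A_0$ with $T_0(P\circ g) = P \circ T_0(g)$ and strong asymptotic expansion domain $\Omega$.

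The main obstacle I anticipate is the coordination between two tails of different natures: the analytic tail $\sum_{n \ge N} p_n \gg^n$, controlled by $|\gg| \to 0$ on $\Omega$, and the formal tail $\sum_{n \ge N} F_n$, controlled by $\ord(F_n) \to +\infty$. The smallness of $g$ is exactly what makes both tails shrink simultaneously and uniformly at each truncation level $\exp^{-r}$; without it one would lose either convergence of $P(\gg)$ on $\Omega$ or well-formedness of $P \circ T_0(g)$ in $\Gs{\RR}{E}$.
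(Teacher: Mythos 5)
Your proof is correct and follows essentially the same route as the paper's: in part (1) you take $g_d$ to be the leading coefficient of $T_0(g)$ and set $\epsilon = 1 - g_d^{-1}g\exp^d$ (which is exactly the paper's $\epsilon$, via Remark \ref{asym_expansion_rmk} and Lemma \ref{closure_lemma}), and in part (2) you show each $p_ng^n \in \A_0$ with $T_0(p_ng^n)=p_nT_0(g)^n$ and $\ord(F_n)\to\infty$, then control the analytic tail by $O(\gg^N)$ — the paper packages exactly this last step by invoking Lemma \ref{inf_sum_of_strong}, whereas you re-derive it inline, but the content is the same. One small imprecision worth noting: your parenthetical justification of $O(\gg^N)=o(\bexp^{-r})$ cites only $|\gg|<R/2$ and Lemma \ref{exp_on_sqd}, but what is actually needed is $\gg=O(\bexp^{-d'})$ in $\Omega$ (which you have, from $(\ast_{g,d'})$ — and indeed use earlier to get $\gg\to0$) together with $\bexp^{-Nd'}=o(\bexp^{-r})$ from Lemma \ref{strong_order_cor}.
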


\begin{rmk}
	In the situation of Part (1) above, the germ $\frac g{g_d \exp^{-d}}$ is a unit belonging to $\A_0$.
\end{rmk}

\begin{proof}
	(1) Say $T_0(g) = \sum_{r \ge d} g_r \exp^{-r}$; then  take $$\epsilon := -\frac{g - g_d\exp^{-d}}{g_d\exp^{-d}},$$ which belongs to $\A_0$ by Lemma \ref{closure_lemma}(2).
	
	(2) By Condition $(\ast_{g,0})$, the function $P \circ \gg$ is a bounded, holomorphic extension of $P \circ g$ on $\Omega$.  Moreover, say $P(X) = \sum a_nX^n \in \Ps{R}{X}$; since $P(z) - \sum_{i=0}^n a_i z^i = O(z^n)$ at 0 in $\CC$ by absolute convergence, it follows that $$P \circ \gg - \sum_{i=0}^n \gg^i = o(\gg^n) \quad\text{in } \Omega.$$  From Lemma \ref{closure_lemma}, it follows that $a_n g^n \in \A_0$ has strong asymptotic expansion domain $\Omega$ and satisfies $$T_0(a_n g^n) = a_n T_0(g)^n,$$ for each $n$.  Since $g$ is small, we have $d>0$, so we also get $\ord\left(g^n\right) = ns \to \infty$ as $n \to \infty$.  Part (2) now follows from Lemma \ref{inf_sum_of_strong}.
\end{proof}

Let $\F_0$ be the fraction field of $\A_0$ and extend $T_0$ to an $\RR$-algebra homomorphism $T_0:\F_0 \into \Gs{\RR}{E}$ in the obvious way (also denoted by $T_0$).  Note that the  functions in $\F_0$ do not all have \textit{bounded} holomorphic extensions to standard quadratic domains; hence the need for first defining $\A_0$.

\begin{rmk}
	Let $\K$ be a subfield of $\C$.  Let $F,G \in \Gs{\K}{E}$, let $g$ be the leading term of $G$ and set $\epsilon:= -\frac{G-g}{g}$.  Recall that $$\frac FG = \frac Fg \cdot (\geom\circ\epsilon),$$ where $\geom = \sum_{n=0}^\infty X^n$ is the geometric series.
\end{rmk}

\begin{cor}
	\label{construction_cor_1}
	\begin{enumerate}
		\item Let $f \in \F_0$.  Then $f$ has strong asymptotic expansion $T_0\left(f\right)$, and there exist unique $d, f_d \in \RR$ and $\epsilon \in \A_0$ such that $$f = f_d \exp^{-d}(1+\epsilon)$$ and  $\ord(\epsilon) > 0$.  
		\item $\left(\F_0,L,T_0\right)$ is a strong qaa field.
	\end{enumerate}
\end{cor}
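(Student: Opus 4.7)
Plan. For part (1), I write $f = p/q$ with $p, q \in \A_0$ and $q$ nonzero, and apply Lemma \ref{construction_lemma_1}(1) to both factors to obtain $p = a\exp^{-a'}(1-\epsilon_p)$ and $q = b\exp^{-b'}(1-\epsilon_q)$ with nonzero $a, b \in \RR$ and $\epsilon_p, \epsilon_q \in \A_0$ of positive order. By Lemma \ref{construction_lemma_1}(2) with the geometric series $\geom$, the germ $(1-\epsilon_q)^{-1} = \geom \circ \epsilon_q$ belongs to $\A_0$, hence so does $\epsilon := (1-\epsilon_p)(\geom\circ\epsilon_q) - 1$. Setting $d := a'-b'$ and $f_d := a/b$ gives $f = f_d\exp^{-d}(1+\epsilon)$; the positivity $\ord(\epsilon) > 0$ follows because the constant term of $T_0\epsilon = (1-T_0\epsilon_p)(1-T_0\epsilon_q)^{-1} - 1$ vanishes. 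That $T_0(f)$ is a strong asymptotic expansion of $f$ then follows from Lemma \ref{closure_lemma}(1,2) applied to the product $f_d\exp^{-d}(1+\epsilon)$, after observing that for each $d \in \RR$ the germ $\exp^{-d}$ has the single-monomial strong asymptotic expansion $\exp^{-d}$ in every standard quadratic domain (a direct check of Definition \ref{asymptotic_def}). Uniqueness of $(d, f_d, \epsilon)$ is obtained by reading off the leading monomial and coefficient of $T_0(f)$ and invoking injectivity of $T_0$ on $\A_0$.

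For part (2), I verify conditions (i)--(iii) of Definition \ref{strong_qaa_algebra} for $(\F_0, L, T_0)$. Condition (i) is automatic: since $T_0$ is injective on $\A_0$ by Lemma \ref{algebra_cor}(3), its natural extension to the fraction field $\F_0$ is an injective field homomorphism into $\Gs{\RR}{E}$. For (ii), fix $f \in \F_0$ and $n \in L$; using the decomposition from (1) together with the inclusion $\supp(T_0\epsilon) \subseteq \set{\exp^{-r}:\ r > 0}$, a direct computation gives $(T_0 f)_n = f_d\exp^{-d}\cdot(1+T_0\epsilon)_{n\exp^d}$ when $n \le \exp^{-d}$, and $(T_0 f)_n = 0$ otherwise. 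Since $T_0(\A_0)$ is truncation closed (Corollary \ref{closure_cor}) and $\exp^d \in \F_0$ for every $d \in \RR$, this truncation lies in $T_0(\F_0)$. For (iii), let $\Omega$ be a standard quadratic domain witnessing the strong qaa property of $(\A_0, L, T_0)$ for $\epsilon$ uniformly over every $m \in L$; for $n \le \exp^{-d}$ the truncation formula above yields $g_n = f_d\exp^{-d}(1+\epsilon_{n\exp^d})$ with $\epsilon_{n\exp^d} \in \A_0$, so $f - g_n = f_d\exp^{-d}(\epsilon - \epsilon_{n\exp^d}) = o(n)$ in $\Omega$ because $\epsilon - \epsilon_{n\exp^d} = o(n\exp^d)$ there, while for $n > \exp^{-d}$ we have $g_n = 0$ and $f = O(\exp^{-d}) = o(n)$ in $\Omega$ by Lemma \ref{strong_order_cor}.

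The main obstacle is arranging a single SQD $\Omega$ on which $f = p/q$ admits a holomorphic extension and on which all truncations of $T_0(f)$ simultaneously satisfy the required asymptotic relation. For the first, I enlarge the parameter $C$ of $\Omega = \Omega_C$ so that the holomorphic extension of $\epsilon_q$ has modulus less than $1/2$ throughout, which makes $q$ nonvanishing there; for the second, I rely on the fact, already established in Corollary \ref{closure_cor} via Lemma \ref{strong_full_asymptotics}, that $\A_0$ is strong qaa uniformly over all $m \in L$, so that a single $\Omega$ controls the truncation errors of $\epsilon$ against every $m \in L$ at once.
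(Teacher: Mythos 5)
Your proposal is correct and lands on the same decomposition $f = f_d\exp^{-d}(1+\epsilon)$ as the paper, but the route differs in a meaningful way, most visibly in part (2). For part (1) the paper is slightly more economical: it applies Lemma \ref{construction_lemma_1}(1) only to the denominator $h$, writes $f = \frac{\exp^s}{h_s}\,g\,\geom(\epsilon)$ with $g\geom(\epsilon) \in \A_0$, and then applies Lemma \ref{construction_lemma_1}(1) once more to this product, whereas you normalize both numerator and denominator and then combine; the content is the same (and in both cases $f = 0$ is silently excluded, since uniqueness of the triple fails there). For part (2) the paper's argument is the two-step pattern used throughout: first check that $(\F_0, E, T_0)$ is a strong qaa field (truncations of $E$-naturally supported series are finite $\RR$-linear combinations of exponential monomials, and condition (iii) over $E$ is exactly part (1)), and then invoke Lemma \ref{strong_full_asymptotics} to pass from the co-initial divisible subgroup $E$ to all of $L$. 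You instead verify conditions (i)--(iii) over $L$ directly: you push truncation at an arbitrary $n \in L$ through the factor $f_d\exp^{-d}$ via the identity $(T_0 f)_n = f_d\exp^{-d}(1+T_0\epsilon)_{n\exp^d}$ and reduce everything to the $(\A_0,L,T_0)$ statement of Corollary \ref{closure_cor}. This is a legitimate alternative that makes the mechanics concrete, but it is not independent of Lemma \ref{strong_full_asymptotics} (which underlies Corollary \ref{closure_cor}); what you gain is an explicit truncation formula and an argument that does not need the intermediate ``$E$-version then upgrade'' step, while the paper's version is shorter and re-uses the general lemma that is needed again at every stage of the iteration.
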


\begin{proof}
	(1) Say $f = g/h$, for some $g,h \in \A_0$ with $h \ne 0$ of order $s \ge 0$.  By Lemma \ref{construction_lemma_1}(1) there are $h_s \in \RR\setminus\{0\}$ and $\epsilon \in \A_0$ such that $h = h_s \exp^{-s}(1-\epsilon)$ and $\ord(\epsilon) > 0$.  In particular, $\epsilon$ is small, so that $$f = \frac{g}{h_s \exp^{-s}(1-\epsilon)} = \frac {\exp^s}{h_s} g \geom(\epsilon).$$  Part (1) now follows from Lemmas \ref{closure_lemma} and \ref{construction_lemma_1}(2).
	
	Since the series in $T_0\left(\F_0\right)$ have $E$-natural support and each monomial in $E$ belongs to $\F_0$, the triple $\left(\F_0,E,T_0\right)$ is a qaa field.  Part (2) now follows from Lemma \ref{strong_full_asymptotics}.
\end{proof}

\subsection*{Iteration}

I construct strong qaa fields $\left(\F_k,L,T_k\right)$, for nonzero $k \in \NN$, such that $\F_{k-1}$ is a subfield of $\F_{k}$ and $T_{k}$ extends $T_{k-1}$,  which I summarize by saying that $\left(\F_k,L,T_k\right)$ \textbf{extends} $\left(\F_{k-1},L,T_{k-1}\right)$.  As in the initial stage of the construction, I will obtain $\F_k$ as the fraction field of a strong qaa algebra $\left(\A_k, L, T_k\right)$ such that 
\begin{renumerate}
	\item each $f \in \A_k$ has a bounded, holomorphic extension to some standard quadratic domain;
	\item for each $f \in \F_k$, there exists $s \in \RR$ such that $\frac f{\exp^s}$ belongs to $\A_k$.
\end{renumerate}
Note that, by Lemma \ref{construction_lemma_1}(1), conditions (i) and (ii) hold for $k=0$, provided I set $\A_{-1} = \F_{-1}:= \RR$.

The construction proceeds by induction on $k$; the case $k=0$ is done above.  So assume $k>0$ and that $\left(\A_{i},L,T_{i}\right)$ and $\left(\F_{i},L,T_{i}\right)$ have been constructed, for $i=0, \dots, k-1$.  First, I set
\begin{equation*}
\F'_{k} := \F_{k-1} \circ \log
\end{equation*}
and define $T'_{k}:\F'_{k} \into \Gs{\RR}{L'}$ by $$T'_{k}(f \circ \log) := \left(T_{k-1} f\right) \circ \log,$$ where $$L':= \set{m \in L:\ m(-1) = 0}.$$

\begin{cor}
	\label{field_closure_cor}
	$\left(\F'_{k},L,T'_{k}\right)$ is a strong qaa field.
\end{cor}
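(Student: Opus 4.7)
The plan is to reduce the claim to the inductive hypothesis on $\F_{k-1}$. Since $L' := \{m \in L : m(-1) = 0\}$ is a divisible subgroup of $L$, by Lemma \ref{strong_full_asymptotics} it suffices to show that $(\F'_k, L', T'_k)$ is a strong qaa field.

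The key observation is that right-composition with $\log$ sends $\log_i$ to $\log_{i+1}$, and therefore defines an order-preserving group isomorphism $L \to L'$, $m \mapsto m \circ \log$. Extending by $\RR$-linearity gives an injective $\RR$-algebra homomorphism $\Gs{\RR}{L} \to \Gs{\RR}{L'}$, $F \mapsto F \circ \log$, that commutes with truncation in the sense that $(F \circ \log)_{n_1 \circ \log} = F_{n_1} \circ \log$ for every $F \in \Gs{\RR}{L}$ and $n_1 \in L$. Meanwhile, composition of germs $f \mapsto f \circ \log$ is a field isomorphism $\F_{k-1} \to \F'_k$ (with inverse $g \mapsto g \circ \exp$), so in particular $\F'_k$ is a field and $T'_k$ is a well-defined, injective $\RR$-algebra homomorphism. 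The image $T'_k(\F'_k) = T_{k-1}(\F_{k-1}) \circ \log$ is then truncation closed in $\Gs{\RR}{L'}$, because any $n \in L'$ is uniquely of the form $n_1 \circ \log$ with $n_1 \in L$, and $T_{k-1}(\F_{k-1})$ is truncation closed in $\Gs{\RR}{L}$ by the inductive hypothesis.

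To verify the asymptotic condition, let $f \in \F_{k-1}$ and let $\Omega_0$ be a strong $\F_{k-1}$-asymptotic expansion domain for $f$, so that $\ff - \hh_{n_1} = o(\nn_1)$ in $\Omega_0$ for each $n_1 \in L$, where $\hh_{n_1}$ is the holomorphic extension on $\Omega_0$ of $h_{n_1} := T_{k-1}^{-1}((T_{k-1}f)_{n_1})$. By Lemma \ref{sqd_facts}(3), any standard quadratic domain $\Omega'$ satisfies $\blog(\Omega') \subseteq \Omega_0$ as germs at $\infty$; fix such an $\Omega'$. For $n \in L'$, write $n = n_1 \circ \log$ with $n_1 \in L$; the truncation identity yields $(T'_k)^{-1}((T'_k(f \circ \log))_n) = h_{n_1} \circ \log$. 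The holomorphic extensions on $\Omega'$ of $f \circ \log$ and of this truncation are $\ff \circ \blog$ and $\hh_{n_1} \circ \blog$, respectively; substituting $z \mapsto \blog(z)$ into the inductive estimate then gives $\ff \circ \blog - \hh_{n_1} \circ \blog = o(\nn_1 \circ \blog) = o(\nn)$ in $\Omega'$, as required.

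I expect the only real obstacle to be bookkeeping: verifying that right-composition with $\log$ correctly transports the ordering, truncations and asymptotic estimates between $L$ and $L'$, respectively between $\Omega_0$ and $\Omega'$.
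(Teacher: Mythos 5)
Your proof is correct and follows the same route as the paper's. The paper's own proof of this corollary is a two-sentence remark (that $\blog$ maps any standard quadratic domain into any other, invoking Lemma \ref{sqd_facts}(3), and that Lemma \ref{strong_full_asymptotics} lets one pass from $L'$ to $L$); you have simply spelled out the routine verifications — that $m \mapsto m \circ \log$ is an order-preserving isomorphism $L \to L'$ compatible with truncation, that $T'_k$ inherits injectivity and truncation-closedness from $T_{k-1}$, and that the asymptotic estimate transfers along $\blog$ — which the paper leaves implicit.
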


\begin{proof}
	Since $\blog$ maps $H(0)$ into any standard quadratic domain, the triple $\left(\F'_{k},L',T'_{k}\right)$ is a strong qaa field.  Since $L'$ is a divisible subgroup of $L$, the corollary follows from Lemma \ref{strong_full_asymptotics}.
\end{proof}

\begin{nrmk}
	\label{rel_growth_rmk}
	Let $g \in \F'_{k}$.  There exists, by condition (ii) above, an $s \in \RR$ such that $g/x^s$ has a bounded holomorphic extension on some standard quadratic domain $\Omega$.  Thus $g = o(\exp^r)$ for every $r>0$ and, since $\F'_k$ is a field, it follows that $g = o(\exp^{-r})$ for some $r>0$ if and only if $g = 0$.
\end{nrmk}

Now let $\A_{k}$ be the set of all $f \in \C$ that have a bounded, holomorphic extension on some standard quadratic domain $\Omega$ and a strong asymptotic expansion $\sum_{r \ge 0} f_r \exp^{-r} \in \Gs{\F'_k}{E}$ in $\Omega$.  (The boundedness assumption is included here, because not all $f \in \F'_k$ are bounded if $k \ge 0$.)

By Remark \ref{rel_growth_rmk}, arguing as in Lemma \ref{algebra_cor}, we see that  $\A_{k}$ is an $\RR$-algebra, each $f \in \A_{k}$ has a unique strong asymptotic expansion $$\tau_k f := \sum_{r \ge 0} f_r \exp^{-r} \in \Gs{\F'_{k}}{E},$$ and the map $\tau_k:\A_{k} \into \Gs{\F'_{k}}{E}$ is an $\RR$-algebra homomorphism.  Moreover, it follows from Fact \ref{ph-l} that this map is injective.  
For $f \in \A_{k}$ with $\tau_k f = \sum f_r \exp^{-r}$, I now define $$T_{k} f:= \sum_{r \ge 0} (T'_{k} f_r) \exp^{-r}.$$ 
For completeness' sake, I also set $\tau_0:= T_0$.

\begin{prop}
	\label{closure_cor_2}
	The triple $\left(\A_{k},L,T_{k}\right)$ is a strong qaa algebra that  extends $\left(\A_{k-1},L,T_{k-1}\right)$.  
\end{prop}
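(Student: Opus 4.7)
The plan is to verify properties (i)--(iii) of Definition~\ref{strong_qaa_algebra} for $(\A_k,L,T_k)$, and separately check the extension $\A_{k-1}\subseteq\A_k$ with $T_k|_{\A_{k-1}}=T_{k-1}$. The extension comes almost for free from the induction: for $f\in\A_{k-1}$ with $\tau_{k-1}f=\sum_{r\ge 0}f_r\exp^{-r}\in\Gs{\F'_{k-1}}{E}$, the inductive hypothesis gives $\F'_{k-1}\subseteq\F'_k$, so the same series lies in $\Gs{\F'_k}{E}$ and is the strong $\A_k$-asymptotic expansion of $f$; meanwhile $T'_k$ restricts to $T'_{k-1}$ on $\F'_{k-1}$ (because $T_{k-1}$ extends $T_{k-2}$ by induction), forcing $T_kf=T_{k-1}f$.

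For injectivity of $T_k$, the key observation is that in $T_kf=\sum_{r\ge 0}(T'_kf_r)\exp^{-r}$ the supports of the individual summands lie in pairwise disjoint ``exp-level'' cosets of $L$ (monomials in $(T'_kf_r)\exp^{-r}$ all have exp-exponent exactly $-r$); hence $T_kf=0$ forces each $T'_kf_r=0$, and then injectivity of $T'_k$ together with injectivity of $\tau_k$ (handled as in Lemma~\ref{algebra_cor}(2), substituting Remark~\ref{rel_growth_rmk} for the ``$a_{s_0}\in\RR$'' step at the end of that proof) yields $f=0$. For truncation closure I write $n=n'\exp^{n_{-1}}$ with $n'\in L'$; when $n_{-1}>0$ the truncation $(T_kf)_n$ is trivially zero, and otherwise, with $R:=-n_{-1}\ge 0$, the computation
\[
(T_kf)_n=\sum_{0\le r<R}(T'_kf_r)\exp^{-r}+\bigl((T'_kf_R)_{n'}\bigr)\exp^{-R}
\]
together with truncation closure of $T'_k(\F'_k)$ (giving $h\in\F'_k$ with $T'_kh=(T'_kf_R)_{n'}$) reduces matters to exhibiting the finite sum $g:=\sum_{r<R}f_r\exp^{-r}+h\exp^{-R}$ as an element of $\A_k$; boundedness on a common standard quadratic domain follows from Remark~\ref{rel_growth_rmk} (after noting, in the $R=0$ subcase, that $\supp(T'_kf_0)$ consists only of monomials $\le 1$ because $f_0$ inherits boundedness from $f$), and $\tau_kg$ is read off directly from the finite sum.

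The asymptotic relation is the main step. Given $f\in\A_k$ with strong $\A_k$-asymptotic expansion domain $\Omega$ and $n\in L$, I set $g_n:=T_k^{-1}((T_kf)_n)$. If $n_{-1}>0$ then $g_n=0$, while $f$ is bounded on $\Omega$ and $|\nn|$ grows at least like $\exp(c\sqrt{|z|})$ on $\Omega$ by Lemma~\ref{exp_on_sqd}, so $\ff=o(\nn)$. If $n_{-1}\le 0$ and $R:=-n_{-1}$, the decomposition
\[
f-g_n=\Bigl(f-\sum_{0\le s\le R}f_s\exp^{-s}\Bigr)+(f_R-g_{R,n'})\exp^{-R}
\]
splits the task in two. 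The right summand is $o(n'\exp^{-R})=o(n)$ by the strong qaa field relation for $f_R\in\F'_k$ at $n'$. For the left, I pick $r>R$, invoke condition~\eqref{strong_asymptotics} to get $\ff-\sum_{s\le r}\ff_s\bexp^{-s}=o(\bexp^{-r})$, and then handle the finitely many (by $E$-naturality of $\supp(\tau_kf)$) intermediate terms $\ff_s\bexp^{-s}$ with $R<s\le r$ individually, using Remark~\ref{rel_growth_rmk} to bound each $\ff_s$ by any prescribed positive exponential and the at-most-logarithmic decay of $\nn'$ to force $\ff_s\bexp^{-s}=o(\nn)$ on $\Omega$.

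The main obstacle is precisely this last coordination: $n$ can sit at arbitrary levels of $L$ rather than only at support points of $T_kf$, so the $E$-graded expansion coming from $\tau_k$ has to be married with the $L'$-graded expansion coming from the strong qaa field structure of $\F'_k$ into a single estimate valid on a common standard quadratic domain. Remark~\ref{rel_growth_rmk}, $E$-naturality of $\supp(\tau_kf)$, and Lemma~\ref{exp_on_sqd} will all be needed essentially to absorb the intermediate terms across both gradings.
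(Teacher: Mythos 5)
Your proof is correct and mirrors the paper's argument in all essentials: the same factorization $T_k=\sigma\circ\tau_k$ for injectivity (the paper makes the map $\sigma$ explicit), the same splitting of $(T_kf)_n$ into the $\exp$-levels below $R=-n(-1)$ plus the $L'$-truncation of $f_R$ at $n'$, and the same use of $(\ast_{f,\cdot})$ together with the strong qaa property of $\F'_k$ to verify the asymptotic relation. The only cosmetic difference is in the residual estimate: the paper takes the auxiliary exponent $\tfrac{R+r'}{2}$ strictly between $R$ and the next support point $r'$, so no intermediate terms arise, whereas you take an arbitrary $r>R$ and absorb the finitely many intermediate terms $\ff_s\bexp^{-s}$, $R<s\le r$, one at a time via Remark~\ref{rel_growth_rmk} and the subexponential decay of $\nn'$ — both routes give the same estimate.
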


\begin{proof}
	The map $\sigma:\Gs{\F'_{k}}{E} \into \Gs{\RR}{L}$ defined by $$\sigma\left( \sum f_r \exp^{-r} \right) := \sum (T'_{k} f_r) \exp^{-r}$$ is an $\RR$-algebra homomorphism, and it is injective because $T'_{k}$ is injective.  Since $T_{k} = \sigma \circ \tau_k$, it follows that $T_{k}$ is an injective $\RR$-algebra homomorphism.
	Let now $f \in \A_k$ be such that $$T_k f = \sum_{m \in L} a_m m \quad\text{and}\quad \tau_k f = \sum_{r \ge 0} f_r \exp^{-r},$$ and let $n \in L$; we show there exists $g \in \A_k$ such that $T_k g = (T_k f)_n$.  Considering $n$ as a function $n:\{-1\} \cup \NN \into \RR$, set $r:= -n(-1)$ and $n':= \prod_{i=0}^\infty \log_i^{n(i)} \in L'$, so that $n = n'\exp^{-r}$ and
	\begin{equation*}
	\left(T_kf\right)_n = \sum_{m(-1) > n(-1)} a_m m + \left(T'_k f_r\right)_{n'} \exp^{-r},
	\end{equation*}
	and let $\Omega$ be a strong asymptotic expansion domain of $f$.
	Note that each $f_s \exp^{-s}$ has a bounded holomorphic extension on $\Omega$.  Since $$\sigma^{-1}\left(\sum_{m(-1) > n(-1)} a_m m\right) = \sum_{s < r} f_s \exp^{-s}$$ has finite support in $\Gs{\F'_k}{E}$, it follows that $$g_1:= \sum_{s<r} f_s\exp^{-s}$$ belongs to $\A_k$ and satisfies $\tau_k g_1 = g_1$ and $T_k g_1 =  \sum_{m(-1) > n(-1)} a_m m$.  On the other hand, by the inductive hypothesis, there exists $h \in \F'_k$ such that $T'_k h = \left(T'_k f_r\right)_{n'}$.  Hence $h \exp^{-r} \in \A_k$ and, by definition of $T_k$, we obtain $T_k(h \exp^{-r}) = \left(T'_k f_r\right)_{n'} \exp^{-r}$.  Therefore, we can take $g:= g_1+h \exp ^{-r}$.
	
	Finally, after shrinking $\Omega$ if necessary, we may assume that $\Omega$ is also a strong asymptotic expansion domain of $g$; we now claim that $\ff-\gg = o(\nn)$ in $\Omega$, which then proves the proposition.  By the inductive hypothesis, we have $\ff_r - \hh = o(\nn')$ in $\Omega$; therefore, 
	\begin{equation} \label{asym_1}
	\ff_r \bexp^{-r} - \hh \bexp^{-r} = o(\nn) \quad\text{ in } \Omega.
	\end{equation} 
	On the other hand, let $r':= \min\set{s \in \RR:\ s > r \text{ and } f_r \ne 0}$.  Then, by hypothesis, we have 
	\begin{equation} \label{asym_2}
	\ff - \gg_1 - \ff_r \bexp^{-r} = o\left(\bexp^{-\frac{r+r'}2}\right) \quad\text{ in } \Omega.
	\end{equation}
	Since $\bexp^{-\frac{r+r'}2} = o(\nn)$ in $\Omega$, the proposition follows.
\end{proof}

Next, identify $\Gs{\RR}{L}$ with a subset of $\Gs{\Gs{\RR}{L'}}{E}$ in the obvious way, and for $F \in \Gs{\Gs{\RR}{L'}}{E}$ set $$\ord(F):= \min\supp(F).$$
Note that $\ord(\tau_k(f)) = \ord(T_k(f))$ for $f \in \F_k$, so I set $$\ord(f):= \ord(\tau_k(f)).$$

Let $P \in \Ps{R}{X^*}$ have natural support, and let $G \in \Gs{\F_k'}{E}$ be such that $\ord(G) > 0$.  Then there exists $F \in \As{\F_k'}{X^*}$ such that $F$ has natural support, $\ord(F) > 0$ and $G = F\left(\exp^{-1}\right).$  Hence $P \circ F$ belongs to $\As{\F_k'}{X^*}$ and has natural support as well.  We therefore define $$P \circ G := (P \circ F)\left(\exp^{-1}\right),$$ which belongs to $\Gs{\F_k'}{E}$.  Similar to the situation in Remark and Definition \ref{series_comp_1}, this composition is associative: if $\ord(P)>0$ and $Q \in \Ps{R}{X^*}$ has natural support, then $(Q \circ P) \circ F = Q \circ (P \circ F)$.

\begin{lemma}
	\label{construction_lemma}
	Let $g \in \A_k$, and set $d:= \ord(g) \ge 0$.
	\begin{enumerate}
		\item There exist unique $g_{d} \in \F'_k$ and $\epsilon \in \A_k$ such that $$g = g_{d} \exp^{-d} (1 + \epsilon)$$ and $\ord(\epsilon) > 0$.
		\item Assume $\ord(g)>0$, and let $P \in \Ps{R}{X}$ be convergent.  Then $P \circ g \in \A_k$, and we have $\tau_k(P \circ g) = P \circ \tau_k(g)$ and $T_k(P \circ g) = P \circ T_k(g)$.
	\end{enumerate}
\end{lemma}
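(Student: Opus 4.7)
The plan is to follow the template of Lemma \ref{construction_lemma_1} but with the extra complication that the coefficients $g_r$ of $\tau_k g$ now live in $\F'_k$, so I must work with their holomorphic extensions which are only known to grow slower than every $\exp^s$, $s>0$ (by Remark \ref{rel_growth_rmk}), rather than being bounded. Both parts of the lemma are proved by first analyzing things at the level of $\tau_k$ (so in $\Gs{\F'_k}{E}$, where Lemma \ref{closure_lemma} and Lemma \ref{inf_sum_of_strong} live), and then transporting to $T_k$ via the $\RR$-algebra homomorphism $\sigma$ from the proof of Proposition \ref{closure_cor_2}.

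For (1), write $\tau_k(g) = \sum_{r \ge d} g_r \exp^{-r}$ with $g_d \in \F'_k \setminus\{0\}$, and define $\epsilon := g/(g_d \exp^{-d}) - 1$ as a germ in $\C$. Let $\Omega$ be a strong asymptotic expansion domain of $g$ on which $\bg_d$ and $\bg_d^{-1}$ both have holomorphic extensions (using that $\F'_k$ is a field). Since $\supp(\tau_k g)$ is $E$-natural, it is a discrete sequence $d = d_0 < d_1 < \cdots$ tending to $+\infty$. First, fix any $d' \in (d_0,d_1)$; condition \eqref{strong_asymptotics} at $r = d'$ gives $\gg - \bg_d \bexp^{-d} = o(\bexp^{-d'})$ in $\Omega$, and then Remark \ref{rel_growth_rmk} applied to $1/g_d \in \F'_k$ yields $\bepsilon = o(\bexp^{-(d'-d)})/\bg_d = o(\bexp^{s-(d'-d)})$ for any $s>0$; choosing $s < d'-d$ shows $\bepsilon \to 0$, hence is bounded on $\Omega$. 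Next, for any $r > 0$, choose $\delta > 0$ so that no support point of $\sum_{s>0}(g_{s+d}/g_d)\exp^{-s}$ lies in $(r,r+\delta]$; a direct computation (multiplying through by $\bg_d \bexp^{-d}$) gives
\[
\bepsilon - \sum_{0<s\le r}\bigl(\bg_{s+d}/\bg_d\bigr)\bexp^{-s} \;=\; \Bigl(\gg - \sum_{t\le r+\delta+d} \bg_t\bexp^{-t}\Bigr)\bexp^d/\bg_d,
\]
which by \eqref{strong_asymptotics} for $g$ is $o(\bexp^{-r-\delta})/\bg_d = o(\bexp^{-r-\delta/2})$, using $1/\bg_d = o(\bexp^{\delta/2})$. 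Hence $\epsilon \in \A_k$ with $\tau_k\epsilon = \sum_{s>0}(g_{s+d}/g_d)\exp^{-s}$, which has positive order. Uniqueness is immediate from the injectivity of $\tau_k$ (Proposition \ref{closure_cor_2}) by reading off the leading coefficient and order of $\tau_k g$.

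For (2), the argument parallels Lemma \ref{construction_lemma_1}(2). Writing $P = \sum_n a_n X^n$ and choosing $\Omega$ a strong asymptotic expansion domain of $g$ small enough that $|\gg|$ is less than the radius of convergence of $P$ throughout $\Omega$, the composition $P \circ \gg$ is a bounded holomorphic extension of $P \circ g$ on $\Omega$, and absolute convergence gives $P \circ \gg - \sum_{i=0}^n a_i \gg^i = o(\gg^n)$ in $\Omega$. Iterated application of Lemma \ref{closure_lemma} shows each $a_n g^n \in \A_k$ has $\tau_k(a_n g^n) = a_n (\tau_k g)^n$ in $\Omega$, with orders $n\cdot\ord(g) \to \infty$, so Lemma \ref{inf_sum_of_strong} yields $P \circ g \in \A_k$ with $\tau_k(P \circ g) = \sum_n a_n (\tau_k g)^n = P \circ \tau_k(g)$. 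Applying the $\RR$-algebra homomorphism $\sigma$ from the proof of Proposition \ref{closure_cor_2} termwise (which is legitimate because $\sigma$ preserves orders and the sum converges formally) then gives $T_k(P \circ g) = P \circ T_k(g)$.

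The main obstacle is the estimate in (1): because $\bg_d$ is neither bounded above nor bounded away from zero, the naive division by $\bg_d \bexp^{-d}$ threatens to destroy the little-$o$ estimates from condition \eqref{strong_asymptotics}. The trick is to exploit the $E$-naturalness of $\supp(\tau_k g)$ to insert a small margin $\delta$ in the estimate, which is then absorbed by the sub-exponential growth of $1/\bg_d$ guaranteed by Remark \ref{rel_growth_rmk}. Everything else is bookkeeping that mirrors the $k=0$ case.
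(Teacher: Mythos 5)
Your proof is correct and follows the same basic strategy as the paper, which simply instructs the reader to re-run the proof of Lemma \ref{construction_lemma_1}(1,2) with $\tau_k$ in place of $T_0$, and then adds a one-line check that $\sigma$ passes through finite truncations to obtain the $T_k$-statement.  Where you differ is in part (1): the paper delegates the claim $\epsilon\in\A_k$ to Lemma \ref{closure_lemma}(2) --- whose proof, after shifting by $\exp^s$ with $s<0$, already accommodates subexponential coefficients via condition (iii) of Definition \ref{asymptotic_def} --- whereas you re-derive the required estimate directly, inserting a $\delta$-margin using the $E$-naturalness of $\supp(\tau_kg)$ and absorbing the growth of $1/g_d$ by Remark \ref{rel_growth_rmk}.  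Both routes are valid; yours is more self-contained and makes explicit the mechanism (subexponential growth eaten by the support margin) that makes "replace $T_0$ by $\tau_k$" safe even though $g_d$ and $1/g_d$ lie in $\F'_k$ and may be unbounded, at the cost of repeating, in a special case, part of the content of Lemma \ref{closure_lemma}(2).  For part (2) and the $T_k$-statement your argument matches the paper's; the phrase "which is legitimate because $\sigma$ preserves orders and the sum converges formally" could be tightened to the paper's precise observation that for each $r\ge0$ there is $N_r$ with $(P\circ\tau_k(g))_{\exp^{-r}}=\sum_{n=0}^{N_r}a_n\left(\tau_k(g)^n\right)_{\exp^{-r}}$, a \emph{finite} sum to which $\sigma$ applies termwise.
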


\begin{proof}
	Replacing $T_0$ by $\tau_k$ throughout, the proof of Lemma \ref{construction_lemma_1}(1,2) gives everything except the statement $T_k(P \circ g) = P \circ T_k(g)$.  However, in the the situation of part (2) with the notations from the proof of Lemma \ref{construction_lemma_1}(2), since for each $r \ge 0$ there exists $N_r \in \NN$ such that  $$(P \circ \tau_k(f))_{\exp^{-r}} = \sum_{n=0}^{N_r} a_n \left(\tau_k(f)^n\right)_{\exp^{-r}},$$ it follows that $\sigma(P \circ \tau_k(f)) = P \circ \sigma(\tau_k(f))$.  
\end{proof}

As in the construction of $\F_0$, I now let $\F_{k}$ be the fraction field of $\A_{k}$ and extend $\tau_k$ and $T_{k}$ correspondingly.

\begin{cor}
	\label{construction_cor}
	\begin{enumerate}
		\item Let $f \in \F_k$.  Then $f$ has strong asymptotic expansion $\tau_k\left(f\right)$, and there exist unique $d \in \RR$, $f_d \in \F'_k$ and $\epsilon \in \A_k$ such that $$f = f_d \exp^{-d}(1+\epsilon)$$ and $\ord(\epsilon) > 0$.  In particular, $f \in \A_k$ if and only if $f$ is bounded.
		\item The triple $\left(\F_k,L,T_k\right)$ is a strong qaa field.
	\end{enumerate}
\end{cor}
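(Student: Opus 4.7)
The plan is to mirror the proof of Corollary \ref{construction_cor_1}, using Lemma \ref{construction_lemma} in place of Lemma \ref{construction_lemma_1} and Proposition \ref{closure_cor_2} as the strong qaa algebra structure on $\A_k$. The key new feature is that the leading coefficient $f_d$ now lies in $\F'_k$ rather than in $\RR$, so its holomorphic extension on a standard quadratic domain may be unbounded; control over this extension is supplied by Remark \ref{rel_growth_rmk}.

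For Part~(1), write $f = g/h$ with $g,h \in \A_k$ and $h \ne 0$, and set $s:= \ord(h) \ge 0$. Lemma \ref{construction_lemma}(1) produces $h = h_s\exp^{-s}(1-\delta)$ with $h_s \in \F'_k\setminus\{0\}$ and small $\delta \in \A_k$; Lemma \ref{construction_lemma}(2) applied to the geometric series then gives $\geom(\delta) = 1/(1-\delta) \in \A_k$, so $f = (g\geom(\delta))\exp^s/h_s$. A second application of Lemma \ref{construction_lemma}(1) to $g\geom(\delta) \in \A_k$ produces the form $f = f_d\exp^{-d}(1+\epsilon)$ with $d \in \RR$, $f_d \in \F'_k\setminus\{0\}$ and $\epsilon \in \A_k$ of positive order. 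For the strong asymptotic expansion, the one-term series $f_d\exp^{-d}$ is itself a strong asymptotic expansion of the function $f_d\exp^{-d}$ in any standard quadratic domain $\Omega$ on which $f_d$ has a holomorphic extension: the only nontrivial case $r < d$ of Condition \eqref{strong_asymptotics} reduces to $\ff_d = o(\bexp^{d-r})$, which follows from Remark \ref{rel_growth_rmk}. Lemma \ref{closure_lemma}(2) applied to $f_d\exp^{-d}$ and $1+\epsilon$ then yields the strong asymptotic expansion $\tau_k(f) := f_d\exp^{-d}(1+\tau_k(\epsilon))$ of $f$. Uniqueness of the triple $(d,f_d,\epsilon)$ follows from injectivity of $T_k$ on $\F_k$ together with injectivity of $T'_k$ on $\F'_k$ applied to the leading coefficient.

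For the ``in particular'' clause, the forward direction is part of the definition of $\A_k$. Conversely, assume $f \in \F_k$ is bounded with decomposition $f = f_d\exp^{-d}(1+\epsilon)$. The case $d < 0$ is ruled out by Remark \ref{rel_growth_rmk}, since it would force $f_d \in \F'_k\setminus\{0\}$ to decay exponentially. If $d > 0$, the extension $\ff_d \bexp^{-d}$ is bounded on a standard quadratic domain by Remark \ref{rel_growth_rmk} and Lemma \ref{exp_on_sqd}, so the expansion from Part~(1) places $f$ in $\A_k$. If $d = 0$, boundedness of $f$ forces $f_d$ to be bounded as a germ at $+\infty$; by induction on $k$ applied to $\F_{k-1}$ (using $\F'_k = \F_{k-1}\circ\log$ and Lemma \ref{sqd_facts}(3)), $f_d$ admits a bounded holomorphic extension on a standard quadratic domain, so $f_d \in \A_k$ and hence $f = f_d(1+\epsilon) \in \A_k$.

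For Part~(2), I verify the three conditions of Definition \ref{strong_qaa_algebra} for $(\F_k,L,T_k)$. Injectivity of $T_k$ on $\F_k$ is inherited from injectivity on $\A_k$ (Proposition \ref{closure_cor_2}) because $\Gs{\RR}{L}$ is a field. For truncation closure, any $n = n'\exp^{-t} \in L$ with $n' \in L'$ splits $(T_k f)_n$ into a finite sum of full $E$-slices $T'_k(f_d\epsilon_r)\exp^{-(d+r)}$ (for $r\ge 0$ with $d+r<t$, using the convention $\epsilon_0 := 1$), realised as $T_k$ of a finite sum in $\F_k$, plus a top slice $(T'_k(f_d\epsilon_{t-d}))_{n'}\exp^{-t}$, realised via the inductive truncation closure of $T'_k(\F'_k)$ coming from Corollary \ref{field_closure_cor}. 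The strong asymptotic relation is verified by splitting $\ff - \gg_n$ into an ``$E$-tail'' (the error between $\ff$ and the holomorphic extension of the full $E$-truncation of $\tau_k(f)$ at $\exp^{-t}$, controlled by passing to a higher exponent $r'' > t-d$ in $\supp(\tau_k(\epsilon))$ and using that $\bexp^{-(r''-(t-d))}$ decays exponentially in $\sqrt{|z|}$ by Lemma \ref{exp_on_sqd}, faster than the polynomial growth of $\ff_d/\nn'$ from Remark \ref{rel_growth_rmk}) and an ``$L'$-tail'' (the holomorphic extension of $(f_d\epsilon_{t-d}-h)\exp^{-t}$, which is $o(\nn'\bexp^{-t}) = o(\nn)$ by the inductive strong-qaa relation in $\F'_k$ applied to $f_d\epsilon_{t-d}$). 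The step I expect to be the main obstacle is this final dominance argument: an arbitrary $n \in L$ cuts through both the $E$-layer at $\exp^{-t}$ and the logarithmic coefficient $f_d$, so controlling both tails uniformly requires combining the inductive hypothesis on $\F'_k$ with the exponential-in-$\sqrt{|z|}$ decay estimates from Lemma \ref{exp_on_sqd}.
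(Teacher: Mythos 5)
Part~(1) essentially matches the paper: you recover the decomposition $f = f_d\exp^{-d}(1+\epsilon)$ via $g/h$, Lemma~\ref{construction_lemma}(1), the geometric series through Lemma~\ref{construction_lemma}(2), and Lemma~\ref{closure_lemma}; the paper is slightly terser (one explicit appeal to Lemma~\ref{construction_lemma}(1), with the second folded into ``follows from Lemmas~\ref{closure_lemma} and~\ref{construction_lemma}(2)''), but the content is the same, and your case analysis on the sign of $d$ for the ``in particular'' clause is sound.

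For Part~(2) the paper takes a much shorter route that renders the dominance argument you single out as the main obstacle unnecessary. Rather than verifying the three conditions of Definition~\ref{strong_qaa_algebra} for $\F_k$ directly via $f = f_d\exp^{-d}(1+\epsilon)$, the paper uses the consequence of Part~(1) that every $f \in \F_k$ can be written as $f = \exp^r g$ with $g \in \A_k$ and $r \in \RR$: indeed $f\exp^{d-1} = f_d\exp^{-1}(1+\epsilon)$ is bounded by Remark~\ref{rel_growth_rmk}, hence in $\A_k$ by the very ``in particular'' clause you just proved. Since multiplication by $\exp^r$ is an order-preserving bijection of $L$, one has $(T_kf)_n = \exp^r(T_kg)_{n\exp^{-r}}$ and $\ff - T_k^{-1}\bigl((T_kf)_n\bigr) = \bexp^r\bigl(\gg - T_k^{-1}((T_kg)_{n\exp^{-r}})\bigr)$, so both truncation closure and the estimate $\ff - T_k^{-1}((T_kf)_n) = o(\nn)$ reduce immediately to Proposition~\ref{closure_cor_2} applied to $g$ at the shifted monomial $n\exp^{-r}$. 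Your direct ``$E$-tail''/``$L'$-tail'' splitting is correct in outline, but it reproduces exactly the work already carried out in the proof of Proposition~\ref{closure_cor_2} (the two displayed estimates \eqref{asym_1} and \eqref{asym_2} there), instead of invoking that proposition as a black box; the reduction $f = \exp^r g$ absorbs the unboundedness of $f_d$ cleanly into the assertion $g \in \A_k$, rather than making you track its polynomial growth through the estimate via Remark~\ref{rel_growth_rmk} and Lemma~\ref{exp_on_sqd}.
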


\begin{proof}
	(1) Say $f = g/h$, for some $g,h \in \A_k$ with $h \ne 0$ of order $s \ge 0$.  By Lemma \ref{construction_lemma}(1), there are nonzero $h_s \in \F'_k$ and $\epsilon \in \A_k$ such that $h = h_s \exp^{-s}(1-\epsilon)$ and $\ord(\epsilon) > 0$.  In particular, $\epsilon$ is small, so that $$f = \frac{g}{h_s \exp^{-s}(1-\epsilon)} = \frac {\exp^s}{h_s} g \geom(\epsilon).$$  Part (1) now follows from Lemmas \ref{closure_lemma} and \ref{construction_lemma}(2).
	
	(2) The map $T_k$ is injective, because the restriction of $T_k$ to $\A_k$ is.  Also, by part (1), each $f \in \F_k$ is of the form $f = \exp^r g$ with $g \in \A_k$ and $r \in \RR$.  Since $\left(\A_k,L,T_k\right)$ is a strong qaa algebra, it follows that $\left(\F_k,L,T_k\right)$ is a strong qaa field.
\end{proof}

\begin{nrmk}
\label{iterated_rmk}
Since $\A_0$ contains all polynomials in $\exp$, the algebra $\A_1$ contains the class $\A$ of almost regular maps.
\end{nrmk}

In view of Proposition \ref{closure_cor_2} and Corollary \ref{construction_cor}, we set $$\A:= \bigcup_k \A_k \quad\text{and}\quad \F:= \bigcup_k \F_k,$$ and we let $T$ be the common extension of all $T_k$ to $\F$; we denote the restriction of $T$ to $\A$ by $T$ as well.  It follows that $(\A,L,T)$ is a strong qaa algebra and $(\F,L,T)$ is a strong qaa field such that $\F$ is the fraction field of $\A$.  This finishes the proof of Theorem \ref{main_thm}(1).

\section{Closure under differentiation}  \label{diff_section}

The next lemma is a version of L'H\^opital's rule for holomorphic maps on standard quadratic domains.

\begin{lemma}
	\label{derivative_o}
	Let $0 < C < D$ and $\phi:\Omega_C \into \CC$ be holomorphic.
	\begin{enumerate}
		\item Let $r \in \RR$ be such that $\phi = o(\exp^{-r})$ in $\Omega_C$.  Then $\phi' = o(\exp^{-r})$ in $\Omega_D$.
		\item If $\phi$ is bounded in $\Omega_C$, then $\phi'$ is bounded in $\Omega_D$.
	\end{enumerate}
\end{lemma}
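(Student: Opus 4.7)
The plan is to apply Cauchy's integral formula on a small disk around each point $z \in \Omega_D$, using Lemma \ref{sqd_facts}(1) to keep the disk inside $\Omega_C$ where the hypothesis on $\phi$ is available. Concretely, fix $\epsilon > 0$ with $\epsilon < D - C$ (any small enough $\epsilon$ works); by Lemma \ref{sqd_facts}(1), the $\epsilon$-neighbourhood $T(\Omega_D,\epsilon)$ is contained in $\Omega_C$ as germs at $\infty$, so there is $R>0$ such that for every $z \in \Omega_D$ with $|z| > R$ the closed disk $\bar{B}(z,\epsilon)$ lies in $\Omega_C$. For such $z$, Cauchy's formula yields
\[
\phi'(z) = \frac{1}{2\pi i} \oint_{|w-z|=\epsilon} \frac{\phi(w)}{(w-z)^2}\,dw,
\qquad
|\phi'(z)| \le \frac{1}{\epsilon}\max_{|w-z|=\epsilon}|\phi(w)|.
\]

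Part (2) then falls out immediately: if $|\phi| \le M$ on $\Omega_C$, then $|\phi'(z)| \le M/\epsilon$ for $z \in \Omega_D$ with $|z|>R$, so $\phi'$ is bounded in $\Omega_D$.

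For part (1), the main point is to compare $|\bexp^{-r}(w)|$ with $|\bexp^{-r}(z)|$ as $w$ ranges over the circle $|w-z| = \epsilon$. Since $|\bexp^{-r}(w)/\bexp^{-r}(z)| = \exp(-r\cdot\re(w-z))$ and $|\re(w-z)| \le \epsilon$, this ratio is bounded by the constant $\exp(|r|\epsilon)$ independent of $z$. Writing
\[
\frac{|\phi'(z)|}{|\bexp^{-r}(z)|} \le \frac{1}{\epsilon}\max_{|w-z|=\epsilon}\frac{|\phi(w)|}{|\bexp^{-r}(w)|}\cdot\frac{|\bexp^{-r}(w)|}{|\bexp^{-r}(z)|} \le \frac{e^{|r|\epsilon}}{\epsilon}\max_{|w-z|=\epsilon}\frac{|\phi(w)|}{|\bexp^{-r}(w)|},
\]
and noting that $|w| \ge |z|-\epsilon \to \infty$ uniformly in $w$ as $|z| \to \infty$ in $\Omega_D$, the hypothesis $\phi = o(\bexp^{-r})$ in $\Omega_C$ forces the right-hand side to tend to $0$. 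Hence $\phi' = o(\bexp^{-r})$ in $\Omega_D$, as required.

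There is essentially no obstacle here: the only mildly subtle point is verifying that the choice of $\epsilon$ in Lemma \ref{sqd_facts}(1) is uniform in $z$ (so that the Cauchy estimate is genuinely pointwise on all of $\Omega_D$ past some $R$), and that the ratio of exponentials is controlled by a constant depending only on $r$ and $\epsilon$, not on $z$. Both are routine once the geometry of the standard quadratic domains $\Omega_C \supset \Omega_D$ is in place.
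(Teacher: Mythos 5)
Your proof is correct and follows essentially the same route as the paper's: apply the Cauchy estimate for $\phi'$ on a small circle around each $z \in \Omega_D$, using Lemma \ref{sqd_facts}(1) to guarantee the circle eventually stays inside $\Omega_C$, and then control the ratio $|e^{-rw}/e^{-rz}|$ by a constant depending only on $r$ and the radius. The only cosmetic difference is that the paper fixes the circle radius at $1$ (absorbing the exponential ratio into the constant $e^{2|r|}$), whereas you work with a generic small $\epsilon$; the substance is identical.
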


\begin{proof}
(1) By Lemma \ref{sqd_facts}(1), there is $R>0$ such that $D(z,2) \subseteq \Omega_C$ for every $z \in \Omega_D$ with $|z| > R$.  Let $z \in \Omega_D$ be such that $|z|>R$, and let $w_z \in \set{w:\ |w-z| = 1}$ be such that $|\phi(w_z)| = \max_{|w-z|=1} |\phi(w)|$; then, by Cauchy's formula, we have $$|\phi'(z)| \le |\phi(w_z)|.$$  On the other hand,  $$|e^{-rz}| = e^{-r\re z} \ge \begin{cases} e^{-r(\re w_z -2)} = e^{2r}e^{-rw_z} &\text{if } r \le 0, \\ e^{-r(\re w_z +2)} = e^{-2r}e^{-rw_z} &\text{if } r \ge 0. \end{cases}$$  Therefore,  $$\left|\frac{\phi'(z)}{e^{-rz}}\right| \le e^{2|r|}\left|\frac{\phi(w_z)}{e^{-rw_z}}\right|.$$  Since $|w_z| \sim |z|$ and $\phi = o(\exp^{-r})$ in $\Omega_C$, the conclusion follows.

The proof of (2) is similar and left to the reader.
\end{proof}

I now set $$\D:= \set{f \in \C:\ f \text{ is differentiable}}$$  and for $F = \sum f_r \exp^{-r} \in \Gs{\D}{E}$, I define $$F':= \sum (f'_r-rf_r) \exp^{-r} \in \Gs{\C}{E}.$$

\begin{prop}
	\label{derivative_prop}
	Let $k \in \NN$ and $f \in \F_k$.  Then $f' \in \F_k$ and  $\tau_k(f') = (\tau_kf)'.$
\end{prop}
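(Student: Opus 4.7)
My plan is to proceed by induction on $k$, using Lemma \ref{derivative_o} as the key analytic tool to pass derivatives through strong asymptotic relations. On the formal side, the identity $(G/H)' = (G'H - GH')/H^2$ holds in $\Gs{\F'_k}{E}$ because $F \mapsto F'$ is a derivation there; combined with the fact that $\tau_k$ is an $\RR$-algebra homomorphism on $\F_k$, this reduces everything to proving, at each inductive stage, that $g' \in \A_k$ and $\tau_k(g') = (\tau_k g)'$ for every $g \in \A_k$.

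For the base case $k=0$, let $g \in \A_0$ have bounded holomorphic extension $\gg$ on $\Omega_C$ and $\tau_0(g) = \sum_{r \ge 0} a_r \exp^{-r}$ with $a_r \in \RR$. By Lemma \ref{derivative_o}(2), $\gg'$ is bounded on $\Omega_D$ for any $D > C$. For each $N \ge 0$, applying Lemma \ref{derivative_o}(1) to $\phi_N := \gg - \sum_{r \le N} a_r \bexp^{-r}$, which satisfies $\phi_N = o(\bexp^{-N})$ in $\Omega_C$ by hypothesis, I obtain on $\Omega_D$ that
\[ \gg' - \sum_{r \le N} (-r a_r)\bexp^{-r} = \phi_N' = o\!\left(\bexp^{-N}\right),\]
using $(\bexp^{-r})' = -r \bexp^{-r}$. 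Hence $g' \in \A_0$ with $\tau_0(g') = \sum_{r \ge 0} (-r a_r) \exp^{-r} = (\tau_0 g)'$, since the formal coefficient derivative $a_r'$ vanishes on $\RR$.

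For the inductive step, let $g \in \A_k$ with $\tau_k(g) = \sum_{r \ge 0} f_r \exp^{-r}$, $f_r \in \F'_k$, and a common strong asymptotic expansion domain $\Omega = \Omega_C$ on which $\gg$ and each $\ff_s$ are holomorphic. The candidate expansion for $g'$ is $(\tau_k g)' = \sum (f_r' - r f_r) \exp^{-r}$, so I must first verify $f_r' \in \F'_k$. Writing $f_r = u \circ \log$ with $u \in \F_{k-1}$, the inductive hypothesis gives $u' \in \F_{k-1}$, so the chain rule yields $f_r' = (u' \circ \log)\cdot(1/x)$; since $1/x = \exp^{-1}\circ\log \in \F'_k$ and $\F'_k$ is a field (Corollary \ref{field_closure_cor}), I conclude $f_r' \in \F'_k$. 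Applying Lemma \ref{derivative_o}(1) to $\phi_r := \gg - \sum_{s \le r} \ff_s \bexp^{-s}$ then gives on $\Omega_D$ that
\[ \gg' - \sum_{s \le r}\left(\ff_s' - s\ff_s\right)\bexp^{-s} = o\!\left(\bexp^{-r}\right),\]
and Lemma \ref{derivative_o}(2) gives boundedness of $\gg'$ on $\Omega_D$; together with Remark \ref{rel_growth_rmk}, which ensures that the holomorphic extension of $f_s' \in \F'_k$ (which must agree with $\ff_s'$ by uniqueness) satisfies Definition \ref{asymptotic_def}(iii), this shows $g' \in \A_k$ with $\tau_k(g') = (\tau_k g)'$ as required.

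The main obstacle will be the bookkeeping of holomorphic extensions across the induction: arranging $\gg$ and all the coefficient extensions $\ff_s$ on a single $\Omega_C$, verifying that Lemma \ref{derivative_o} transfers everything to a slightly smaller $\Omega_D$ uniformly in the truncation index $r$, and checking that each $f_s' - sf_s$ comes equipped with the data required by Definition \ref{asymptotic_def} to serve as a coefficient of a strong asymptotic expansion in $\Gs{\F'_k}{E}$. Once this is under control, passing from $\A_k$ to $\F_k$ by the quotient rule, and verifying $\tau_k(f') = (\tau_k f)'$, becomes a purely formal consequence of $\tau_k$ being a homomorphism and of both $\cdot'$ and its formal counterpart being derivations.
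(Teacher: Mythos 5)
Your proposal is correct and follows essentially the same route as the paper's proof: induction on $k$, reduction to $\A_k$ via the quotient rule, the chain-rule computation $f_r' = (g_r'/\exp)\circ\log \in \F'_k$, and Lemma \ref{derivative_o} to differentiate the strong asymptotic relation term by term on a slightly smaller standard quadratic domain. The extra bookkeeping you flag at the end (a common domain for $\gg$ and the $\ff_s$, the growth condition on $\ff_s'-s\ff_s$ via Remark \ref{rel_growth_rmk}) is implicitly handled in the paper by the definition of strong asymptotic expansion domain and by membership of $f_s'-sf_s$ in $\F'_k$.
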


\begin{proof}
	By induction on $k$; let $\tau_k(f) = \sum f_r \exp^{-r}$.  If $k=0$, then $(\tau_kf)' \in \Gs{\F'_k}{E}$ because the coefficients of $\tau_k f$ are real numbers.  If, on the other hand, $k>0$, then $f_r = g_r \circ \log$ for some $g_r \in \F_{k-1}$, so that 
	\begin{equation*}
	f_r' = \frac{g_r' \circ \log}{x} = \frac{g_r'}{\exp} \circ \log \in \F'_k
	\end{equation*}
	by the inductive hypothesis, so that again $(\tau_kf)' \in \Gs{\F'_k}{E}$.
	
	To finish the proof of the proposition, we may assume (by the quotient formula for derivatives) that $f \in \A_k$.
	Let $C>0$ be such that $\Omega_C$ is a domain of strong asymptotic expansion of $f$, and let $D>C$.  By Lemma \ref{derivative_o}(2), the map $\ff':\Omega_D \into \CC$ is a bounded, holomorphic extension of $f'$.  Moreover, if $r  \ge 0$, then
	\begin{equation*}
	\ff' - \sum_{s \le r} (\ff_s'-s\ff_s)\bexp^{-s} = \left(\ff - \sum_{s \le r} \ff_s\bexp^{-s}\right)' = o(\bexp^{-r}) \quad\text{in } \Omega_D,
	\end{equation*}
	by Lemma \ref{derivative_o}(1) and Condition $(\ast_{f,r})$, so that $f' \in \A_k$.
\end{proof}

Finally note that, for $m \in L$, the derivative $m'$ is a linear combination of elements of $L$ such that $\max\supp(m') \to 0$ as $m \to 0$ in $L$.  Therefore, for $F = \sum a_m m \in \Gs{\RR}{L}$, I define $$F':= \sum a_m m',$$ and I note that the map $F \mapsto F'$ is a derivation on $\Gs{\RR}{L}$.

\begin{cor}
	\label{derivative_cor}
	$\F$ is closed under differentiation and for $f \in \F$, we have $T(f') = (Tf)'$.
\end{cor}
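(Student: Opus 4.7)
The plan is: closure of $\F$ under differentiation is immediate from Proposition \ref{derivative_prop}, since $\F = \bigcup_k \F_k$ and each $\F_k$ is closed. For the identity $T(f') = (Tf)'$, I will prove by induction on $k$ that $T_k(f') = (T_k f)'$ for every $f \in \F_k$; as $T$ restricts to $T_k$ on $\F_k$, this yields the corollary.

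The base case $k = 0$ is immediate: $T_0 = \tau_0$ takes values in $\Gs{\RR}{E} \subseteq \Gs{\RR}{L}$, the coefficients of $T_0(f)$ are real constants, and the derivation from just before Proposition \ref{derivative_prop} then reduces to $\sum a_r \exp^{-r} \mapsto \sum (-r a_r) \exp^{-r}$, which coincides with the restriction of the derivation on $\Gs{\RR}{L}$ defined just above (since $(\exp^{-r})' = -r \exp^{-r}$ in $L$). For the inductive step, I exploit the factorization $T_k = \sigma \circ \tau_k$ from the proof of Proposition \ref{closure_cor_2}, where $\sigma\colon \Gs{\F'_k}{E} \to \Gs{\RR}{L}$ sends $\sum f_r \exp^{-r}$ to $\sum (T'_k f_r) \exp^{-r}$. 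Proposition \ref{derivative_prop} gives $\tau_k(f') = (\tau_k f)'$, so it suffices to show that $\sigma$ commutes with differentiation; comparing coefficients of $\exp^{-r}$ in both derivations reduces this to the coefficient identity $T'_k(f'_r) = (T'_k f_r)'$ for each $f_r \in \F'_k$.

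To verify the latter, I write $f_r = g \circ \log$ with $g \in \F_{k-1}$; the chain rule gives $f'_r = (g'/\exp) \circ \log$. Then, using that $T_{k-1}$ is an $\RR$-algebra homomorphism with $T_{k-1}(\exp) = \exp$ together with the inductive hypothesis $T_{k-1}(g') = (T_{k-1} g)'$, the defining property $T'_k(h \circ \log) = (T_{k-1} h) \circ \log$ yields $T'_k(f'_r) = ((T_{k-1} g)'/\exp) \circ \log$. The identity $T'_k(f'_r) = (T'_k f_r)'$ then reduces to a series-level chain rule $(F \circ \log)' = (F'/\exp) \circ \log$ for $F \in \Gs{\RR}{L'}$, which I check monomial-by-monomial using the explicit form of $m'$ for $m \in L$ (both sides equal $(m \circ \log)/x$ times the logarithmic derivative of $m \circ \log$). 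The only real task is bookkeeping---keeping the three derivations (on $\Gs{\RR}{L}$, on $\Gs{\F'_k}{E}$, and the functional one on $\F'_k$) properly aligned---and no substantive obstacle arises.
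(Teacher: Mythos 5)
Your proof follows the paper's approach exactly: induction on $k$, with Proposition~\ref{derivative_prop} handling the $\tau_k$-level computation and the factorization $T_k=\sigma\circ\tau_k$ reducing the inductive step to the coefficient identity $T'_k(f'_r)=(T'_k f_r)'$. If anything you are more careful than the paper, which compresses that last step into a bare appeal to the inductive hypothesis, whereas you correctly isolate the needed formal chain rule $(F\circ\log)'=(F'/\exp)\circ\log$; note only that this should be stated for $F\in\Gs{\RR}{L}$ (since $T_{k-1}(g_r)$ need not lie in $\Gs{\RR}{L'}$), and in your parenthetical the common value of both sides on a monomial $m$ is $\frac{m\circ\log}{x}$ times the logarithmic derivative of $m$ composed with $\log$, not times the logarithmic derivative of $m\circ\log$.
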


\begin{proof}
	Let $k \in \NN$ and $f \in \F_k$; I proceed by induction on $k$ to show that $T(f') = (Tf)'$.  If $k=0$, then $T(f) = \tau_0(f)$ and $(Tf)' = (\tau_0f)'$, so the claim follows from Proposition \ref{derivative_prop} in this case.  So I assume $k>0$ and the claim holds for lower values of $k$.
	
	Say $\tau_k(f) = \sum f_r \exp^{-r}$; then $T(f) = \sum T(f_r) \exp^{-r}$ by definition, while $\tau_k(f') = (\tau_kf)' = \sum (f_r'-rf_r) \exp^{-r}$.  It follows from the inductive hypothesis that
	\begin{align*}
	T(f') &= \sum T(f_r'-rf_r) \exp^{-r} \\&= \sum \left( T(f_r') - r T(f_r) \right) \exp^{-r} \\&= \sum \left( (Tf_r)' - rT(f_r) \right) \exp^{-r} \\ &= (Tf)',
	\end{align*}
	as claimed.
\end{proof}

\section{Closure under $\log$-composition} \label{comp_section}

Note that, since $\F$ is a field, it is closed under $\log$-composition if and only if for all $f,g \in \F$ such that $\lim_{x \to +\infty} g(x) = +\infty$, the composition $f \circ \log \circ g$ belongs to $\F$.
First I show that, for infinitely increasing $g \in \F$, the map $\log \circ g$ always has a holomorphic extension that maps standard quadratic domains \textit{into} standard quadratic domains.

\begin{lemma}
	\label{large_asymptotics}
	Let $g \in \F$ and $\Omega_g$ be a strong $\F$-asymptotic expansion domain of $g$, and assume that $g$ is infinitely increasing.  Then, for some standard quadratic domain $\Omega'_g \subseteq \Omega_g$, the function $\log \circ g$ has a holomorphic extension $\ll_g$ on $\Omega'_g$ such that, for every standard quadratic domain $\Omega$, there exists a standard quadratic domain $\Delta \subseteq \Omega'_g$ with $(\ll_g)(\Delta) \subseteq \Omega$.
\end{lemma}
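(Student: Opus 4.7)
The plan is to proceed by induction on $k$ such that $g \in \F_k$.  Applying Corollary \ref{construction_cor}(1) (or Corollary \ref{construction_cor_1}(1) when $k=0$), I write $g = g_d \exp^{-d}(1+\epsilon)$ with $g_d \in \F'_k$ (setting $\F'_0 := \RR$), $d \in \RR$, and $\epsilon \in \A_k$ small.  Since $g$ is infinitely increasing, either (A) $d<0$ with $g_d$ eventually positive on the real tail, or (B) $d=0$ and $g_d$ is itself infinitely increasing.  After shrinking $\Omega_g$ to a standard quadratic domain $\Omega'_g$ on which $|\epsilon|<1/2$, the principal branch $\text{Log}(1+\epsilon)$ is holomorphic on $\Omega'_g$ and bounded by $\log 2$ in modulus.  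To define $\ll_g$, it then suffices to construct a holomorphic logarithm $\ll_{g_d}$ of $\gg_d$ and set $\ll_g := \ll_{g_d} - d z + \text{Log}(1+\epsilon)$, which automatically agrees with $\log\circ g$ on the real tail.

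To obtain $\ll_{g_d}$ in the inductive step I write $g_d = h\circ\log$ with $h\in\F_{k-1}$ and set $\ll_{g_d} := \ll_h\circ\blog$, using Lemma \ref{sqd_facts}(3) to ensure $\blog$ maps $\Omega'_g$ into a standard quadratic domain where $\ll_h$ is defined.  Since $h$ need not be infinitely increasing in sub-case (A), I strengthen the induction hypothesis as follows: for every eventually-positive $f\in\F_k$, the extension $\ff$ is nonvanishing on some standard quadratic domain and $\log\circ f$ extends holomorphically there to some $\ll_f$ with $|\ll_f(z)| = O(|z|)$.  Both clauses propagate through the same multiplicative decomposition: $\bexp^{-d}$ and $1+\epsilon$ are manifestly nonvanishing, $\hh$ is nonvanishing by the induction applied through the composition with $\blog$ and Lemma \ref{sqd_facts}(3), and $|\ll_{g_d}(z)| = |\ll_h(\blog z)| = O(|\blog z|) = O(\log|z|)$ because the induction gives $|\ll_h(w)| = O(|w|)$ while $|\blog z|$ is logarithmic in $|z|$.

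For the main geometric conclusion $\ll_g(\Delta)\subseteq\Omega$ in sub-case (A), we have $\ll_g(z) = -d z + O(\log|z|)$ with $-d>0$; I take $\Delta = \Omega_{C'}$ with $C'$ sufficiently large, so that Lemma \ref{sqd_facts}(2) gives $-d\cdot\Delta\subseteq\Omega_{C_1}$ for some $C_1>C_\Omega$, and the logarithmic perturbation is absorbed by the quadratic ``slack'' between $\Omega_{C_1}$ and $\Omega_{C_\Omega}$ provided by Lemma \ref{sqd_facts}(1).  In sub-case (B), the full induction applies to the infinitely increasing $h$: given $\Omega=\Omega_{C_\Omega}$, fix a slightly smaller standard quadratic domain $\Omega_1$ with $T(\Omega_1,\log 2)\subseteq\Omega$ (via Lemma \ref{sqd_facts}(1)), invoke the inductive conclusion to find $\Delta_1\subseteq\Omega'_h$ with $\ll_h(\Delta_1)\subseteq\Omega_1$, and then use Lemma \ref{sqd_facts}(3) to choose a standard quadratic domain $\Delta\subseteq\Omega'_g$ with $\blog(\Delta)\subseteq\Delta_1$, so that $\ll_g(\Delta)\subseteq\Omega_1+D(0,\log 2)\subseteq\Omega$.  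The main obstacle is sub-case (A): since $h$ may fail to be infinitely increasing, the inductive image-containment conclusion does not apply to $h$, and the strengthened $O(|z|)$-growth form of the hypothesis is essential for bounding $\ll_h(\blog z)$ so that the linear term $-dz$ dominates.
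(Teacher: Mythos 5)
Your proof is correct in outline but takes a genuinely different route from the paper's. You use the decomposition $g = g_d \exp^{-d}(1+\epsilon)$ with $g_d \in \F'_k$ from Corollary \ref{construction_cor}(1), induct on $k$, and strengthen the inductive hypothesis to obtain a nonvanishing holomorphic extension and an $O(|z|)$ bound on $\ll_h$ even when $h$ is not infinitely increasing (needed because $g_d$ may be bounded or small when $d<0$). The paper instead uses the \emph{leading monomial} decomposition $g = am(1+\epsilon)$ with $m \in L$ and $a>0$, available because $(\F,L,T)$ is a strong qaa field; since $\log\circ m$ is then an explicit finite $\RR$-linear combination of the $\log_i$'s, the paper reduces directly to the case $m=\log_{i_0}$ and invokes Lemma \ref{sqd_facts}(3), with no induction on $k$ and no auxiliary growth or nonvanishing hypotheses. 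The paper's route is shorter and requires nothing beyond the qaa structure and the explicit form of $L$; your route tracks the recursive construction of $\F$ explicitly and produces the quantitative estimate $|\ll_g(z)|=O(|z|)$ as a by-product, which is of independent use.

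One technical point to flag in sub-case (A): you invoke Lemma \ref{sqd_facts}(1) to absorb the $O(\log|z|)$ perturbation into the ``slack'' between $\Omega_{C_1}$ and $\Omega_{C_\Omega}$, but that lemma as stated only guarantees $T(\Omega_D,\epsilon)\subseteq\Omega_C$ for a \emph{fixed} $\epsilon>0$, not for a perturbation that grows (even logarithmically) with $|z|$. What you actually need is the stronger geometric fact that for $D>C$ there are $R,\kappa>0$ with $\dist(w,\partial\Omega_C)\ge\kappa\sqrt{|w|}$ whenever $w\in\Omega_D$ and $|w|>R$; this follows from Lemma \ref{sqd_lemma}(2,3) by comparing the boundary parametrizations, and it indeed dominates a logarithmic perturbation, but it is not literally the content of Lemma \ref{sqd_facts}(1) and should be stated and proved. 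You should also note that the germ-level inclusions of Lemma \ref{sqd_facts} require $|\ll_{g_d}(\blog z)-dz|$ to be large as $|z|\to\infty$ in $\Delta$; this holds because $\re(-dz)\gtrsim\sqrt{|z|}$ in a standard quadratic domain, but it is worth making explicit.
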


\begin{proof}
	Let $a>0$, $m \in L$ be the leading monomial of $\F$ and small $\epsilon \in \F$ be such that $g = am(1+\epsilon)$.  Shrinking $\Omega_g$ if necessary, I may assume that $\Omega_g$ is also a strong $\F$-asymptotic expansion domain of $\epsilon$ with corresponding holomorphic extension $\ee:\Omega_g \into \CC$.  Then by the asymptotic relation \eqref{strong_asymptotic_relation}, we have $$\gg = a\mm(1+\ee) \quad\text{with } \ee = o(\mathbf 1) \text{ in } \Omega_g;$$ in particular, after shrinking $\Omega_g$ again if necessary, the function $\log a + \log(1+\epsilon)$ has holomorphic extension $\blog \mathbf a + \blog(1+\ee)$ on $\Omega_g$ such that $\blog(1+\ee) = o(\mathbf 1)$ in $\Omega_g$.
	Since $$\log \circ g = \log a + \log \circ m + \log(1+\epsilon),$$ I may therefore assume by Lemma \ref{sqd_facts} that $g = m \in L$.  However $\log \circ m$ is an $\RR$-linear combination of $\log_i$, for various $i \in \NN$.  Let $i_0$ be the smallest $i$ such that $\log_i$ appears in this $\RR$-linear combination.  Since $m$ is infinitely increasing, the coefficient of $\log_{i_0}$ in this $\RR$-linear combination must be positive.  Since $\log_i = o(\log_{i_0})$ in $H(0)$, for $i > i_0$, it follows as above that I may even assume that $m = \log_{i_0}$.  But this last case follows from Lemma \ref{sqd_facts}(3).
\end{proof}

\subsection*{Formal $\log$-composition in $\Gs{\RR}{L}$}
Let $G \in \Gs{\RR}{L}$, and let $g \in L$ be the leading monomial of $G$; so there are nonzero $a \in \RR$ and small $\epsilon \in \Gs{\RR}{L}$  such that $G = ag(1+\epsilon)$. 
\begin{itemize}
	\item[(L1)] Assume that $a>0$.  Note that $\log \circ g$ is an $\RR$-linear combination of elements of the set $\set{\log_k:\ k \in \NN}$.  Therefore, with $F_{\log} \in \Ps{R}{X}$ the Taylor series at 0 of $\log(1+x)$, I define  $$\log \circ G:= \log a + \log \circ g + (F_{\log} \circ \epsilon).$$  Note that if $G$ is small and $G>0$, then $-\log \circ G = \log \circ \frac1G$; and if $G$ is infinitely increasing, then so is $\log \circ G$.  Thus, for $G$ infinitely increasing and nonzero $i \in \NN$, I define $$\log_i \circ G := \log \circ (\log_{i-1} \circ G)$$ by induction on $i$.
	\item[(L2)] Recall that $L' = \set{m \in L:\ m(-1) = 0}$, and let $F \in \Gs{\RR}{L'}$.  So there are $l \in \NN$ and $P \in \Gs{\RR}{(X_0, \dots, X_l)^*}$ with natural support such that $$F = P\left(\frac1{\log_0}, \dots, \frac1{\log_l}\right);$$  i.e., the support of $F$ contains no exponential monomials.  Assume  that $G$ is infinitely increasing.  Then, by (L1) above, there exist $k_i \in \NN$ and $Q_i \in \Gs{\RR}{(X_{-1}, \dots, X_{k_i})^*}$ with natural support such that $$\frac1{\log_i} \circ G = Q_i\left(\frac1\exp, \frac1{\log_0}, \dots, \frac1{\log_{k_i}}\right), \quad\text{for } i \in \NN.$$  Since $G$ is infinitely increasing, each $\frac1{\log_i} \circ G$ is small, and it follows that $P(Q_0, \dots, Q_l) \in \Gs{\RR}{(X_0, \dots, X_k)^*}$, where $k= \max\{k_0, \dots, k_l\}$.  Therefore, I set $$F \circ G:= P(Q_0, \dots, Q_l)\left(\frac1\exp, \frac1{\log_0}, \dots, \frac1{\log_k}\right) \in \Gs{\RR}{L}.$$
	\item[(L3)] Let $F \in \Gs{\RR}{L}$, and let $l \in \NN$ and $P \in \Gs{\RR}{(X_{-1}, \dots, X_l)^*}$ with natural support be such that $$F = P\left(\frac1\exp, \frac1{\log_0}, \dots, \frac1{\log_l}\right).$$  Then I set $$F \circ \log:= P\left(\frac1{\log_0}, \dots, \frac1{\log_{l+1}}\right);$$ note that $F \circ \log \in \Gs{\RR}{L'}$.
\end{itemize}

\begin{lemma}
	\label{associative_lemma}
	Let $F \in \Gs{\RR}{L'}$ and $G \in \Gs{\RR}{L}$ be such that $G$ is infinitely increasing.  Then $(F \circ \log) \circ G = F \circ (\log \circ G)$.
\end{lemma}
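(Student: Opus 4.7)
The plan is to reduce the identity to a single monomial-level statement and then propagate it through the substitution rules (L1)--(L3).

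The key observation is the formal iteration identity
\[
\log_i \circ (\log \circ G) = \log_{i+1} \circ G \quad\text{in } \Gs{\RR}{L},
\]
valid for every $i \ge 0$ whenever $G \in \Gs{\RR}{L}$ is infinitely increasing.  I would prove this by induction on $i$.  For $i=0$ it reduces to $\log \circ G = \log_1 \circ G$ since $\log_0 = \id$; for $i=1$ it is exactly the defining recursion $\log_2 \circ G := \log \circ (\log_1 \circ G)$ from (L1).  For the inductive step, applying the recursion in (L1) twice (once to the infinitely increasing series $\log \circ G$ and once to $G$) gives
\[
\log_{i+1} \circ (\log \circ G) = \log \circ \bigl(\log_i \circ (\log \circ G)\bigr) = \log \circ (\log_{i+1} \circ G) = \log_{i+2} \circ G.
\]
Taking reciprocals in $\Gs{\RR}{L}$ yields the dual form $\tfrac{1}{\log_{i+1}} \circ G = \tfrac{1}{\log_i} \circ (\log \circ G)$, which is what enters (L2).

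Next I would write $F = P\left(\tfrac{1}{\log_0}, \dots, \tfrac{1}{\log_l}\right)$ as in (L2), with $P \in \Gs{\RR}{(X_0, \dots, X_l)^*}$ of natural support.  By (L3), $F \circ \log = P\left(\tfrac{1}{\log_1}, \dots, \tfrac{1}{\log_{l+1}}\right)$; viewing this as $\tilde P\left(\tfrac{1}{\log_0}, \dots, \tfrac{1}{\log_{l+1}}\right)$ with $\tilde P$ independent of $X_0$, (L2) applied to $F \circ \log$ and $G$ gives
\[
(F \circ \log) \circ G = P(Q_1, \dots, Q_{l+1})\left(\tfrac{1}{\exp}, \tfrac{1}{\log_0}, \dots, \tfrac{1}{\log_k}\right),
\]
where each $Q_j$ is the symbolic series representing $\tfrac{1}{\log_j} \circ G$.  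Since $\log \circ G$ is infinitely increasing by (L1), (L2) applied to $F$ and $\log \circ G$ gives, after enlarging $k$ if necessary,
\[
F \circ (\log \circ G) = P(R_0, \dots, R_l)\left(\tfrac{1}{\exp}, \tfrac{1}{\log_0}, \dots, \tfrac{1}{\log_k}\right),
\]
where each $R_i$ represents $\tfrac{1}{\log_i} \circ (\log \circ G)$.  The dual identity from the first step forces $R_i = Q_{i+1}$ for every $i$, so the two expressions coincide.

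The main obstacle is formal bookkeeping: one must verify that the symbolic representations $Q_j$ and $R_i$ in $\Gs{\RR}{(X_{-1}, \dots, X_k)^*}$ are determined uniquely enough by their values as series in $\Gs{\RR}{L}$ so that the equality $R_i = Q_{i+1}$ at the level of logarithmic series lifts to the level of the abstract variables, and that substitution into $P$ is associative---essentially the content of Remark and Definition \ref{series_comp_1}, now applied with a one-variable shift.  Once this is in place, the proposition reduces to the monomial identity established in the first step.
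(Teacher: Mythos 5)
Your proposal is correct and follows essentially the same route as the paper's proof: write $F = P\left(\frac1{\log_0}, \dots, \frac1{\log_l}\right)$, observe the monomial shift identity $\frac1{\log_i} \circ (\log \circ G) = \frac1{\log_{i+1}} \circ G$, and check that both $(F \circ \log) \circ G$ and $F \circ (\log \circ G)$ unwind via (L1)--(L3) to the same expression $P(Q_1, \dots, Q_{l+1})$ evaluated at $\left(\frac1\exp, \frac1{\log_0}, \dots, \frac1{\log_k}\right)$. The only cosmetic difference is that you spell out the induction on $i$ for the shift identity, which the paper treats as immediate from (L1).
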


\begin{proof}
	Let $Q_i$ be for $\frac1{\log_i} \circ G$ be as in (L2).  Then for $i \in \NN$, I have by (L1) that $$\frac1{\log_i} \circ (\log \circ G) = \frac1{\log_{i+1}} \circ G = Q_{i+1}\left(\frac1\exp, \frac1{\log_0}, \dots, \frac1{\log_{k_{i+1}}}\right).$$  On the other hand, let $l \in \NN$ and $P \in \Gs{\RR}{(X_0, \dots, X_l)^*}$ with natural support be such that $$F = P\left(\frac1{\log_0}, \dots, \frac1{\log_l}\right).$$  Then by (L2), I have $$F \circ (\log \circ G) = P(Q_1, \dots, Q_{l+1})\left(\frac1\exp, \frac1{\log_0}, \dots, \frac1{\log_{k}}\right),$$ where $k:= \max\{k_1, \dots, k_{l+1}\}$.  On the other hand, by (L3), I have $F \circ \log = P\left(\frac1{\log_1}, \dots, \frac1{\log_{k_{l+1}}}\right)$, so again by (L2), I get $$(F \circ \log) \circ G = P(Q_1, \dots, Q_{l+1})\left(\frac1\exp, \frac1{\log_0}, \dots, \frac1{\log_{k_{i+1}}}\right),$$ and the lemma is proved.
\end{proof}

I continue working in the setting of (L1)--(L3) above.
\begin{itemize}
	\item[(L4)] For $r \in \RR$, I let $P_r \in \Ps{R}{X}$ be the Taylor series at 0 of $(1+x)^r$, and I define $$G^r := a^r g^r \cdot (P_r \circ \epsilon).$$  Note that, if $G$ is infinitely increasing, then so is $G^r$.
	\item[(L5)] For $r \in \RR$, I let $F_{\exp^r}$ be the Taylor series at 0 of the function $x \mapsto \exp(rx)$, and I set $$\exp^r \circ (\log \circ G) := a^r g^r (F_{\exp^r} \circ(F_{\log} \circ \epsilon).$$  Note that this series has order $r \cdot \ord(g)$; thus, for $F = \sum f_r \exp^{-r} \in \Gs{\RR}{L}$ with $f_r \in \Gs{\RR}{L'}$ I set $$F \circ (\log \circ G) := \sum (f_r \circ (\log \circ G)) \cdot G^{-r}.$$
\end{itemize}

\begin{cor}
	\label{associative_cor}
	Let $F, G \in \Gs{\RR}{L}$ be such that $G$ is infinitely increasing.  Then $(F \circ \log) \circ G = F \circ (\log \circ G)$.
\end{cor}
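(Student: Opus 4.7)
The strategy is to reduce to Lemma~\ref{associative_lemma}, which already handles the case $F \in \Gs{\RR}{L'}$, by decomposing $F$ according to its exponential part and applying the lemma level by level.  Choose $l \in \NN$ and $P \in \Gs{\RR}{(X_{-1}, X_0, \dots, X_l)^*}$ of natural support such that $F = P\!\left(\frac1\exp, \frac1{\log_0}, \dots, \frac1{\log_l}\right)$, and collect the terms of $P$ by their $X_{-1}$-exponent to write $P = \sum_r p_r X_{-1}^r$ with each $p_r \in \Gs{\RR}{(X_0, \dots, X_l)^*}$ of natural support.  This yields the decomposition $F = \sum_r f_r \exp^{-r}$, where $f_r := p_r\!\left(\frac1{\log_0}, \dots, \frac1{\log_l}\right) \in \Gs{\RR}{L'}$.

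By (L3), $f_r \circ \log = p_r\!\left(\frac1{\log_1}, \dots, \frac1{\log_{l+1}}\right)$ and $F \circ \log = P\!\left(\frac1{\log_0}, \dots, \frac1{\log_{l+1}}\right) \in \Gs{\RR}{L'}$.  Let $Q_i$, for $i = 0, \dots, l+1$, be the expansions of $\frac1{\log_i} \circ G$ provided by (L2); by (L4) one has in particular $Q_0 = G^{-1}$.  Applying (L2) to $F \circ \log$ and $G$ gives
\begin{equation*}
(F \circ \log) \circ G \;=\; P(Q_0, Q_1, \dots, Q_{l+1}) \;=\; \sum_r p_r(Q_1, \dots, Q_{l+1}) \cdot G^{-r}.
\end{equation*}
Applying (L2) to each $f_r \circ \log$, the inner factor $p_r(Q_1, \dots, Q_{l+1})$ equals $(f_r \circ \log) \circ G$, which by Lemma~\ref{associative_lemma} equals $f_r \circ (\log \circ G)$.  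On the other hand, (L5) gives $F \circ (\log \circ G) = \sum_r (f_r \circ (\log \circ G)) \cdot G^{-r}$, using that $\exp^{-r} \circ (\log \circ G) = G^{-r}$, a fact built into (L4)--(L5).  Matching the two expressions term by term proves the corollary.

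The main obstacle is purely bookkeeping, and has two aspects.  First, one must check that the infinite sums $\sum_r$ are well-defined in $\Gs{\RR}{L}$: this uses that the leading monomial $g$ of $G$ is infinitely increasing, so $\ord(G^{-r}) \to +\infty$ as $r$ decreases in $\supp(F)$, combined with the natural support of $P$.  Second, one must verify that the $G^{-r}$ factor emerging from the $X_{-1}^r$-substitution via (L2) agrees with the one produced by (L5); this reduces to the formal power series identity $(1+x)^r = \exp\!\left(r \log(1+x)\right)$, i.e., $P_r = F_{\exp^r} \circ (rF_{\log})$, which is precisely the compatibility hard-coded into (L4) and (L5).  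Once these compatibilities are in place, the rest of the argument is a routine reassembly of the level-by-level identities supplied by Lemma~\ref{associative_lemma}.
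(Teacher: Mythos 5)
Your proof is correct and follows essentially the same route as the paper's (much terser) argument: decompose $F = \sum_r f_r\exp^{-r}$, reduce coefficientwise to Lemma~\ref{associative_lemma}, and reconcile the $G^{-r}$ factors via the formal identity $(1+x)^r = \exp(r\log(1+x))$, which is exactly the observation the paper's proof leads with.  One small slip: the identity should read $P_r = F_{\exp^r}\circ F_{\log}$, not $F_{\exp^r}\circ(rF_{\log})$, as you in fact wrote correctly in the sentence immediately preceding.
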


\begin{proof}
	Note that $$P_r(x) = (1+x)^r = \exp(r\log(1+x)) = (F_{\exp^r} \circ F_{\log})(x)$$ for $r \in \RR$ and small $x \in \RR$, so that $P_r \circ \epsilon = F_{\exp^r} \circ F_{\log} \circ \epsilon$.  It follows from (L3), (L4) and Lemma \ref{associative_lemma} that $F \circ (\log \circ G) = (F \circ \log) \circ G$.
\end{proof}

In the situation of the previous corollary, I write $F \circ \log \circ G$ for the composition $F \circ (\log \circ G) = (F \circ \log) \circ G$, called the \textbf{$\log$-composition} of $F$ with $G$.

\subsection*{Closure under $\log$-composition}
First I show that $\F_0$ is closed under $\log$-composition.

\begin{lemma}
	\label{first_log_comp}
	Let $f,g \in \F_0$ and assume that $g$ is infinitely increasing.  Then $f \circ \log \circ g \in \F_0$ and $T_0(f \circ \log \circ g) = T_0(f) \circ \log \circ T_0(g).$
\end{lemma}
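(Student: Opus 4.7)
The plan is to reduce to the case $f \in \A_0$ via the multiplicative representation of Corollary \ref{construction_cor_1}(1), and then in that case use Lemma \ref{large_asymptotics} to obtain a holomorphic extension and Lemma \ref{inf_sum_of_strong} to assemble the strong asymptotic expansion term by term from powers of $g$.

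First I would handle the case $f \in \A_0$. Since $g$ is infinitely increasing, Corollary \ref{construction_cor_1}(1) gives $g = g_{d_g}\exp^{-d_g}(1+\delta)$ with $d_g < 0$, $g_{d_g} > 0$ and $\delta \in \A_0$ small. Writing $T_0(f) = \sum_{n \ge 0} a_{r_n}\exp^{-r_n}$ with $0 \le r_0 < r_1 < \cdots$ and (if infinite) $r_n \to \infty$, Lemma \ref{construction_lemma_1}(2) applied to $P(X) := (1+X)^{-r_n}$ gives $(1+\delta)^{-r_n} \in \A_0$, whence
\begin{equation*}
h_n := a_{r_n} g^{-r_n} = a_{r_n}g_{d_g}^{-r_n}\exp^{r_n d_g}(1+\delta)^{-r_n} \in \A_0
\end{equation*}
with $T_0(h_n) = a_{r_n}T_0(g)^{-r_n}$ of order $-r_n d_g \to \infty$. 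By Lemma \ref{large_asymptotics}, I can find a standard quadratic domain $\Omega'_g$ on which $\log \circ g$ has holomorphic extension $\ll_g$ with $\ll_g(\Omega'_g)$ contained in a common strong asymptotic expansion domain $\Omega$ of $f$ and of each $h_n$; then $\ff\circ\ll_g$ is a bounded holomorphic extension of $f\circ\log\circ g$ to $\Omega'_g$. Since $|\ll_g(z)| \to \infty$ as $|z|\to\infty$ in $\Omega'_g$, the relation $(\ast_{f,r_N})$ pulled back along $\ll_g$, together with the identity $\exp(-r\ll_g(z)) = g(z)^{-r}$ of holomorphic functions on $\Omega'_g$, yields
\begin{equation*}
\ff\circ\ll_g - \sum_{n=0}^N \hh_n = o(\hh_N) \quad\text{in } \Omega'_g
\end{equation*}
for every $N$. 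Lemma \ref{inf_sum_of_strong} then gives $f\circ\log\circ g \in \A_0$ with strong asymptotic expansion $\sum_n T_0(h_n) = \sum_n a_{r_n} T_0(g)^{-r_n}$, which by (L5) is exactly $T_0(f) \circ \log \circ T_0(g)$.

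For general $f \in \F_0$, Corollary \ref{construction_cor_1}(1) gives $f = f_d\exp^{-d}(1+\epsilon)$ with $f_d, d \in \RR$ and $\epsilon \in \A_0$ small. Since $\exp^{-d}\circ\log\circ g = g^{-d}$ (understood via the real identity $\exp(-d\log g) = g^{-d}$) lies in $\F_0$, and $(1+\epsilon)\circ\log\circ g \in \A_0$ by the first step,
\begin{equation*}
f\circ\log\circ g = f_d\cdot g^{-d}\cdot \bigl((1+\epsilon)\circ\log\circ g\bigr) \in \F_0.
\end{equation*}
Applying $T_0$ multiplicatively and comparing with the formal compositions in (L1)--(L5) then gives $T_0(f\circ\log\circ g) = T_0(f)\circ\log\circ T_0(g)$.

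The main obstacle is the analytic transfer of the asymptotic relation for $\ff$ on $\Omega$ to the required $o$-estimate on $\Omega'_g$. This hinges on two points: Lemma \ref{large_asymptotics}, to arrange $\ll_g(\Omega'_g) \subseteq \Omega$ while retaining $|\ll_g(z)| \to \infty$ at infinity in $\Omega'_g$; and the identification $\exp(-r\ll_g) = g^{-r}$ as holomorphic functions on $\Omega'_g$, so that the transferred small term $o(\exp(-r_N\ll_g))$ matches the holomorphic extension $\hh_N$ produced from the algebraic construction of $g^{-r_N} \in \A_0$ via Lemma \ref{construction_lemma_1}(2).
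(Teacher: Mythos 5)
Your proposal is correct and follows essentially the same approach as the paper: reduce to $f \in \A_0$, use Lemma \ref{large_asymptotics} to get $\ll_g$ with image in a strong asymptotic expansion domain of $f$, observe $g^{-r} \in \F_0$ with $T_0(g^{-r}) = T_0(g)^{-r}$ via Lemma \ref{construction_lemma_1}(2) and $\ord(g^{-r}) = -rd \to +\infty$, then transfer the $o$-estimates along $\ll_g$ and invoke Lemma \ref{inf_sum_of_strong}. The only cosmetic difference is that you carry out the reduction to $\A_0$ via the multiplicative decomposition of Corollary \ref{construction_cor_1}(1) rather than via the fraction-field argument, which is equally valid.
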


\begin{proof}
	It suffices to prove the lemma for $f \in \A_0$.  Let $\Omega$ and $\Delta$ be strong asymptotic expansion domains for $f$ and $g$, respectively.  (Recall that ```strong asymptotic expansion'' and ``strong $\F$-asymptotic expansion'' mean the same thing for $h \in \F_0$.)  By Lemma  \ref{large_asymptotics}, after shrinking $\Omega$ if necessary, the germ $\log \circ g$ has a holomorphic extension $\ll_g$ on $\Omega$ such that $\left(\ll_g\right)(\Omega) \subseteq \Delta$.  Therefore, the function $h:= f \circ \log \circ g$ has bounded, holomorphic extension $\ff \circ \ll_g$ on $\Omega$.
	
	Moreover, for each $r \ge 0$, the germ $g^{-r} = \exp^{-r} \circ (\log \circ g)$ has bounded holomorphic extension $\bexp^{-r} \circ \ll_g$ on $\Omega$.  On the other hand, writing $g = am(1+\epsilon)$ with $a>0$, $m \in L$ the leading monomial of $g$ and $\epsilon \in \A_0$ small, I get $$g^{-r} = a^{-r} m^{-r}( P_{-r} \circ \epsilon),$$ where $P_{-r}$ is the Taylor series expansion of $x \mapsto (1+x)^{-r}$ at 0.  It follows from Lemma \ref{construction_lemma_1}(2) that $g^{-r} \in \F_0$ with strong asymptotic expansion domain $\Omega$ such that $T_0(g^{-r}) = a^{-r} m^{-r} (P_{-r} \circ T_0(\epsilon)) = T_0(g)^{-r}$ by (L1).  Setting $d:= \ord(g) < 0$, it follows in particular that $\ord(g^{-r}) = -rd$.
	
	Now say that $T_0(f) = \sum_{r \ge 0} a_r \exp^{-r}$, and let $r \ge 0$.  Since $f$ has strong asymptotic expansion $T_0(f)$ in $\Delta$, we have $$\ff - \sum_{s \le r} a_s \bexp^{-s} = o(\bexp^{-r}) \quad\text{ in } H(0)\ ,$$ so that $$\ff \circ \ll_g - \sum_{s \le r} a_s(\bexp^{-s} \circ \ll_g) = o\left(\bexp^{-r} \circ \ll_g\right) \quad\text{in } \Omega.$$  By the previous paragraph, we have $a_s g^{-s} \in \F_0$ with strong asymptotic expansion domain $\Omega$, for each $s \ge 0$, and $\ord(a_s g^{-s}) = -sd \to +\infty$ as $s \to +\infty$.  Since $T_0(f)$ has $L$-natural support, it follows from Lemma \ref{inf_sum_of_strong} that $f \in \A_0$ with $T_0(f) = \sum a_r T_0(g)^{-r}$.	 On the other hand, since $T_0(f) \circ \log = \sum a_r x^{-r}$, we have $T_0(f) \circ \log \circ T_0(g) = T_0(f)$, and the lemma is proved.
\end{proof}

Next, let $k,l \in \NN$, $f \in \F_k$ and $g \in \F_l$, and assume that $g$ is infinitely increasing.
The remaining difficulty in the proof of Theorem \ref{main_thm}(2) lies in making sense of the strong asymptotic expansion of $f \circ \log \circ g$.

\begin{nrmks}
	\label{small_rmk}
	Set $s_0:= \ord(g) \le 0$, and  let $g_{s_0} \in \F'_l$ and $\epsilon \in \A_l$ be such that $g = g_{s_0}\exp^{-s_0}(1 + \epsilon)$ and $\ord(\epsilon)>0$.  There are two cases to consider:
	\begin{itemize}
	\item[Case 1:]  $s_0<0$.  Say $\tau_k(f) = \sum f_r \exp^{-r}$ and let $r \in \supp(\tau_k(f))$.  Since $f_r \in \F'_k$, there exists $m(r) \in \NN$ such that $x^{-m(r)} \le |f_r| \le x^{m(r)}$; and since $g \in \F_l$, there exists $n(r) \in \NN$ such that $x^{-n(r)} \le \log \circ g \le x^{n(r)}$.  Hence there exists $N(r) \in \NN$ such that $$x^{-N(r)} \le f_r \circ \log \circ g \le x^{N(r)}.$$  If I already know (by induction on $k$, say) that each $f_r \circ \log \circ g$ belongs to $\F_j$ for some $j \in \NN$ independent of $r$ then, by Corollary \ref{construction_cor}(1), there exist $h_r \in \F_j'$ and $d(r) \in \RR$ such that $f_r \circ \log \circ g \sim h_r \exp^{d(r)}$.  Since (as above for $f_r$) the germ $h_r$ is also polynomially bounded, it follows that $d(r) = \ord(f_r \circ \log \circ g) = 0$, so that $$\ord\big(\tau_{j}(f_r \circ \log \circ g) \tau_l(g)^{-r}\big) = -r s_0.$$  Since $\exp^{-r} \circ \log \circ g = g^{-r}$ for each $r$, this suggests that the series $$\sum_{r \in \RR} \tau_{j}(f_r \circ \log \circ g) \tau_l(g)^{-r}$$ is a candidate for the strong asymptotic expansion of $f \circ \log \circ g$ in this case.
	\item[Case 2:] $s_0=0$.  The assumption that $g$ is infinitely increasing then implies that $g_0 \in \F'_l$ is infinitely increasing as well; in particular, we must have $l>0$.  By Taylor's Theorem, since $\log \circ g = \log \circ g_0 + F_{\log} \circ \epsilon$ and $\log \circ g_0$ is infinitely increasing while $F_{\log} \circ \epsilon$ is small, we have 
	\begin{equation*}
		f \circ \log \circ g = \sum_{i=0}^\infty \frac{f^{(i)} \circ \log \circ g_0}{i!}\ (F_{\log}\circ \epsilon)^i.
	\end{equation*}
	This suggests the following: if I already know (by induction on $l$, say) that each $f^{(i)} \circ \log \circ g_0$ belongs to $\F'_j$ for some $j \ge l$ independent of $i$, then the series $$\sum_{i=0}^\infty \frac{f^{(i)} \circ \log \circ g_0}{i!}\ \tau_l(F_{\log} \circ \epsilon)^i$$ is a candidate for the strong asymptotic expansion of $f \circ \log \circ g$ in this case.
	\end{itemize}
\end{nrmks}

In view of Case 2 above, I need a formal version of the Taylor expansion theorem.  It relies on the observation that the logarithmic generalized power series belong to the set $\TT$ of  \textit{transseries} as defined by van der Hoeven in \cite{Hoeven:2006qr}.

\begin{lemma}
	\label{formal_taylor}
	Let $F \in \Gs{\RR}{L}$, let $k>0$, and let $G \in \Gs{\RR}{L'}$ and $H \in \Gs{\RR}{L}$ be such that $G$ is infinitely increasing and $H$ is small.  Then, as elements of $\,\TT$, we have $$F \circ (G+H) = \sum_{i=0}^\infty \frac{F^{(i)}\circ G}{i!} H^i.$$  
\end{lemma}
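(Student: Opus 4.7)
The plan is to work entirely inside van der Hoeven's transseries field $\TT$, which contains $\Gs{\RR}{L}$ and in which composition, differentiation, and summability of infinite families are developed coherently (see \cite{Hoeven:2006qr}). Within $\TT$ the statement becomes a standard formal Taylor expansion, so the task is mostly one of verifying that each object in the formula lives in $\TT$ and that all the required summabilities hold.

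First I would check that both sides make sense in $\TT$. Since $G$ is infinitely increasing and $H$ is small, $G+H$ is infinitely increasing, so $F \circ (G+H)$ is well-defined by van der Hoeven's composition theorem. Similarly each $F^{(i)} \circ G$ is well-defined, and the hypothesis $G \in \Gs{\RR}{L'}$ (no exponential monomials in $\supp G$) guarantees that composing with $G$ does not create new exponential scales, which is essential for controlling supports. For the right-hand side I would argue summability as follows: because $H$ is small, the supports $\supp(H^i)$ slide down in $L$ uniformly in $i$, so for every $n \in L$ only finitely many indices $i$ contribute a monomial $\ge n$ to the series; combined with the fact that $\supp(F^{(i)}\circ G/i!)$ sits in a fixed initial segment of $L$, this yields a summable family in $\TT$.

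Next I would prove the identity by reducing to monomials. Both sides are $\RR$-linear in $F$ and continuous for the summability topology on $\TT$, so it suffices to verify the identity when $F = m$ is a single monomial in $L$. Since both sides are multiplicative in $F$ (the left by definition of composition with a product, the right by the formal Leibniz rule applied repeatedly and rearranged as a Cauchy product, valid by summability), it further reduces to the generators $\exp^r$ and $\log_i^r$. For $F = \exp^r$ the identity is
\[
\exp^r \circ (G+H) = \exp^r(G)\cdot\exp^r(H) = \exp^r(G)\sum_{i=0}^\infty \frac{r^i H^i}{i!},
\]
which is exactly $\sum_i (\exp^r)^{(i)}(G)\, H^i/i!$ and holds in $\TT$ by the functional equation for $\exp$ together with summability of the exponential series at the small transseries $rH$. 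For $F = \log_i^r$, using the chain rule and induction on $i$, the identity reduces to the convergent power series expansions $(1+x)^r = \sum \binom{r}{i} x^i$ and $\log(1+x) = \sum_{i\ge 1} (-1)^{i+1} x^i/i$ applied to the small transseries $H/G$ (respectively its iterated logarithmic analogues), whose substitution into $\TT$ is again legitimate because the arguments are small.

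The main obstacle is the bookkeeping for summability, both in defining the right-hand side and in justifying the interchanges needed to pass from the monomial case to the general case. This is exactly the content of van der Hoeven's theory of strongly linear operators and composition on $\TT$, and I would invoke those results as black boxes; once summability is in hand, the identity is forced on monomials by the functional equations of $\exp$ and $\log$ and then transferred to arbitrary $F \in \Gs{\RR}{L}$ by continuity.
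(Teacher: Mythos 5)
Your proposal takes a genuinely different route from the paper's. The paper's proof is a three-line change of variables: since $G$ is infinitely increasing, van der Hoeven's Theorem~5.12 gives a compositional inverse $G^{-1} \in \TT$; setting $\delta := H \circ G^{-1}$, one checks that $\delta$ is small and that $m^\dagger \delta$ is small for each $m \in L$ (because $m^\dagger = (\log m)'$ is bounded for logarithmic monomials), which is exactly the hypothesis of van der Hoeven's Proposition~5.11(c); that proposition then gives $F \circ (x+\delta) = \sum_i \frac{F^{(i)}}{i!}\delta^i$, and right-composing with $G$ finishes the argument. In other words, the paper reduces the general Taylor formula to the already-proven Taylor formula around the identity by pulling the perturbation $H$ back along $G^{-1}$, and never touches individual monomials.

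Your approach instead rebuilds the Taylor formula from scratch on generators. The reduction by strong linearity to a single monomial $F = m$, and the multiplicativity argument (Leibniz plus Cauchy product), are sound modulo the summability bookkeeping you acknowledge. The verification for $F = \exp^r$ is clean and correct. However, the $F = \log_i^r$ case is where the proposal has a real gap: you gesture at ``the chain rule and induction on $i$'' and the power series of $\log(1+x)$ and $(1+x)^r$, but what is actually needed is a Taylor-of-a-composition statement in $\TT$ --- if Taylor holds for $F_2$ at $(G,H)$ producing a small perturbation $H_2$, and for $F_1$ at $(F_2\circ G, H_2)$, then it holds for $F_1\circ F_2$ at $(G,H)$ --- and showing the resulting double series rearranges into $\sum_m \frac{(F_1\circ F_2)^{(m)}\circ G}{m!}H^m$ is a Faà di Bruno computation in $\TT$, not a direct consequence of the functional equations of $\exp$ and $\log$. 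This is precisely the kind of case analysis the paper's $G^{-1}$-trick is designed to sidestep: once you translate the question to an expansion around $x$, there is only one case and one cited proposition. If you want to complete your route, you should either prove the Taylor-of-composition lemma explicitly (and note that its summability hypotheses are met because $\log\circ m$, $\log_{i-1}\circ(G+H)-\log_{i-1}\circ G$, etc.\ remain small), or recognize that at that point you are essentially reproving van der Hoeven's Proposition~5.11(c) and cite it directly, as the paper does.
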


\begin{proof}
	By \cite[Theorem 5.12]{Hoeven:2006qr}, there exists a transseries $G^{-1} \in \TT$ such that $G \circ G^{-1} = x$.  Since $H$ is small, so is the transseries $\delta:= H \circ G^{-1}$; that is, we have $\delta \prec 1$ in the notation of \cite{Hoeven:2006qr}.  On the other hand, for $m \in L$ we have $m^\dagger:= (\log m)'$ is bounded, so that $m^\dagger\delta$ is small as well.  It follows from \cite[Proposition 5.11(c)]{Hoeven:2006qr} that $$F \circ (x+\delta) = \sum_{i=0}^\infty \frac{F^{(i)}}{i!} \delta^i.$$  Composing on the right with $G$ then proves the lemma.
\end{proof}

\begin{thm}
 \label{comp-closed_1}
Let $k,l \in \NN$, $f \in \F_k$ and $g \in \F_l$, and assume that $g$ is infinitely increasing.  Then $f \circ \log \circ g \in \F_{k+l}$ and  $$T(f \circ \log \circ g) = (Tf) \circ \log \circ (Tg).$$ 
Moreover, writing $g = g_{s_0}\exp^{-s_0}(1 + \epsilon)$ with $s_0 = \ord(g)$ and $\ord(\epsilon)>0$, and writing $\tau_k(f) = \sum f_r \exp^{-r}$, we have 
\begin{equation*}
\tau_{k+l}(f \circ \log \circ g) = \begin{cases} \sum_{r \in \RR} \tau_{k-1+l}(f_r \circ \log \circ g) \tau_l(g)^{-r} &\text{if } s_0 < 0, \\ \sum_{i \in \NN} \frac{f^{(i)} \circ \log \circ g_0}{i!} \tau_l(F_{\log} \circ \epsilon)^i &\text{if } s_0 = 0, \end{cases}
\end{equation*}  
where $F_{\log}$ is the Taylor series at 0 of the function $x \mapsto \log(1+x)$.
\end{thm}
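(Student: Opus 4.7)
The plan is to induct on $k+l$, with the base case $k=l=0$ supplied directly by Lemma \ref{first_log_comp}. For the inductive step I follow the dichotomy from Remarks \ref{small_rmk}: either $s_0=0$ (so $g\sim g_0$ with $g_0\in\F'_l$ infinitely increasing), or $s_0<0$ (so $g$ has exponentially large leading behaviour). In each case I establish simultaneously that $f\circ\log\circ g\in\F_{k+l}$ and that its $\tau_{k+l}$-expansion matches the displayed formula; the identity $T(f\circ\log\circ g)=(Tf)\circ\log\circ(Tg)$ then drops out by applying the homomorphism $\sigma$ from the proof of Proposition \ref{closure_cor_2} to both sides and unwinding the formal compositions (L1)--(L5) exactly as in Corollary \ref{associative_cor}.

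For Case $s_0=0$, the hypothesis that $g_0$ is infinitely increasing forces $l\ge1$, so I may write $g_0=\tilde g_0\circ\log$ with $\tilde g_0\in\F_{l-1}$ infinitely increasing. Lemma \ref{large_asymptotics} delivers a standard quadratic domain on which $\log\circ g=(\log\circ g_0)+\log(1+\epsilon)$ holomorphically, with the second summand small; the analytic Taylor formula
\[
f\circ\log\circ g=\sum_{i=0}^{\infty}\frac{f^{(i)}\circ\log\circ g_0}{i!}\bigl(\log(1+\epsilon)\bigr)^{i}
\]
mirrors Lemma \ref{formal_taylor} on the formal side. By Corollary \ref{derivative_cor} each $f^{(i)}\in\F_k$, and associativity of composition rewrites $f^{(i)}\circ\log\circ g_0$ as $(f^{(i)}\circ\log\circ\tilde g_0)\circ\log$; the inductive hypothesis at total index $k+(l-1)$ places $f^{(i)}\circ\log\circ\tilde g_0$ in $\F_{k+l-1}$, whence $f^{(i)}\circ\log\circ g_0\in\F'_{k+l}$. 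Since the powers $\bigl(\log(1+\epsilon)\bigr)^{i}$ have orders $i\cdot\ord(\epsilon)\to+\infty$, Lemma \ref{inf_sum_of_strong} packages the Taylor series into the desired strong asymptotic expansion in $\Gs{\F'_{k+l}}{E}$.

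For Case $s_0<0$, I expand $\tau_k(f)=\sum_r f_r\exp^{-r}$ with $f_r=h_r\circ\log\in\F'_k$, $h_r\in\F_{k-1}$, and set $h:=\log\circ g$. The identity $\log g=(-s_0)x+\log g_{s_0}+\log(1+\epsilon)$ shows $h$ is infinitely increasing. Associativity of composition gives $f_r\circ\log\circ g=h_r\circ\log\circ h$, and the inductive hypothesis applied to $h_r\in\F_{k-1}$ and $h$ (either directly at total index $k-1+l$ when $l\ge1$, or, in the sub-case $l=0$, via a Taylor expansion of $f\circ((-s_0)x+\log g_{s_0}+\delta)$ around the affine term $(-s_0)x+\log g_{s_0}$ that parallels Case $s_0=0$ and exploits the closure of $\F_k$ under positive affine substitution afforded by Lemma \ref{sqd_facts}(2)) produces $f_r\circ\log\circ g\in\F_{k-1+l}$. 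Since $g^{-r}=\exp^{-r}\circ\log\circ g$ has order $-rs_0\to+\infty$, Lemma \ref{inf_sum_of_strong} glues the individual expansions into $\sum_r\tau_{k-1+l}(f_r\circ\log\circ g)\,\tau_l(g)^{-r}$.

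The most delicate step is the sub-case $s_0<0$ with $l=0$: the identity function $x$ does not lie in $\F_0$, so one cannot simply declare $\log\circ g\in\F_l$ and must instead reduce the sub-case to the Taylor-type argument just indicated, or sharpen the inductive hypothesis to accommodate a one-unit level shift. A secondary technical burden is ensuring that all the holomorphic extensions of $f$, its derivatives, $\log\circ g$, $\log\circ g_0$, $\epsilon$, and their compositions coexist on a common standard quadratic domain and satisfy the requisite order bounds; this is arranged by combining Lemmas \ref{large_asymptotics}, \ref{sqd_facts}, and \ref{strong_order_cor} before invoking Lemma \ref{inf_sum_of_strong}.
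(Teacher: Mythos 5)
Your overall strategy mirrors the paper's: you use Lemma~\ref{first_log_comp} as the base case, split into the same two cases $s_0<0$ and $s_0=0$, peel off one $\log$ via $\F'_m=\F_{m-1}\circ\log$, invoke Taylor expansion (Lemma~\ref{formal_taylor} and its analytic counterpart) in Case~2, and glue with Lemma~\ref{inf_sum_of_strong}. The one substantive difference is the induction variable, and that is where a genuine gap opens.

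You induct on the sum $k+l$; the paper inducts on the pair $(k,l)$ with the \emph{lexicographic} order, and this is not an idle choice. In Case~1 with $l=0$ you write $f_r=h_r\circ\log$ with $h_r\in\F_{k-1}$ and $h:=\log\circ g$, so that $f_r\circ\log\circ g=h_r\circ\log\circ h$. But as you correctly observe, $h$ contains the monomial $x$ and therefore lies in $\F_1$, not in $\F_0$; the instance you need is thus $(k-1,1)$, whose sum $(k-1)+1=k$ equals $k+l$ and is \emph{not} covered by an induction on $k+l$. Lexicographically, $(k-1,1)<(k,0)$ because the first coordinate strictly drops, so the paper's scheme has this instance available; yours does not. (For the same reason the paper's own index ``$\F_{k-1+l}$'' is slightly too small when $l=0$ and $k\ge1$ --- e.g.\ $f=x\in\F_1$, $g=\exp$ gives $f_0\circ\log\circ g=x\notin\F_0$ --- but this is a cosmetic slip there, since $\F_{k+l}$ is what is actually needed and available; it is a genuine obstruction for your weaker induction.)

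Your proposed patch does not close this gap. Lemma~\ref{sqd_facts}(2) (together with (4)) only shows that a positive affine image of a standard quadratic domain contains another one, i.e.\ it supplies the \emph{holomorphic extension} of $f(\mu x+c)$. It says nothing about the asymptotic side. To show that $f(\mu x+c)\in\F_k$ one must handle the coefficients $f_r(\mu x+c)=h_r\bigl(\log(\mu x+c)\bigr)$ with $h_r\in\F_{k-1}$, and this is again an instance of the theorem at level $(k-1,1)$ (since $\mu x+c\in\F_1$ is of order~$0$). So ``closure of $\F_k$ under positive affine substitution'' is not a consequence of Lemma~\ref{sqd_facts}; it is essentially the very claim you are trying to reach, at a level your induction does not yet reach. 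The alternative you gesture at --- ``sharpen the inductive hypothesis to accommodate a one-unit level shift'' --- is exactly what the lexicographic order accomplishes, but you would need to state and carry that sharpened hypothesis explicitly through the whole argument rather than leave it as an aside. As written, the sub-case $s_0<0$, $l=0$, $k\ge1$ is not proved.
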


\begin{proof}
Since $\F_k$ is the fraction field of $\A_k$, I may assume that $f \in \A_k$.  By Lemma \ref{large_asymptotics} there is a strong $\F$-asymptotic expansion domain $\Omega$ of $g$ such that $\ll_g(\Omega) \subseteq \Delta$, where $\Delta$ is a strong $\F$-asymptotic expansion domain of $f$.  In particular, the germ $h:= f \circ \log \circ g$ has a holomorphic extension $\hh:= \ff \circ \ll_g$ on $\Omega$.


We proceed by induction on the pair $(k,l) \in \NN^2$ with respect to the lexicographic ordering of $\NN^2$.  The case $k=l=0$ corresponds to Lemma \ref{first_log_comp}, so I assume $(k,l)>(0,0)$ and the theorem holds for lower values of $(k,l)$.
Let $f_r \in \F'_k$ be such that $\tau_k(f) = \sum_{r \ge 0} f_r \exp^{-r}$, and let $g_r \in \F_l'$ be such that $\tau_l(g) = \sum_{r \in \RR} g_r \exp^{-r}$.  Set $s_0:= \ord(g) \le 0$; we distinguish two cases:

\subsection*{Case 1: $s_0 < 0$}  By the inductive hypothesis, each $f_r \circ \log \circ g$ belongs to $\F_{k-1+l}$.  Since $f_r \in \RR$ if $k=0$ and $\F'_l \subseteq \F'_{k-1+l}$ if $k>0$, it follows from Remark \ref{small_rmk}(1) that the series $$H:= \sum_{r \ge 0} \tau_{k-1+l}(f_r \circ \log \circ g) \tau_l(g)^{-r}$$ belongs to $\Gs{\F'_{k-1+l}}{E} \subseteq \Gs{\F'_{k+l}}{E}$, and I claim that $\tau_{k+l}(h) = H$.

To prove the claim, let $r \in \supp(\tau_k(f))$; it suffices, by Lemma \ref{inf_sum_of_strong}, to show that 
\begin{equation*}
	\hh - \sum_{s \le r} (\ff_s \circ \ll_g) \gg^{-s} = o\left((\ff_r \circ \ll_g) \gg^{-r} \right) \quad\text{in } \Omega.
\end{equation*}
However, by assumption I have $\ff - \sum_{s \le r} \ff_s \bexp^{-s} = o\left(\bexp^{-r'}\right)$ in $\Delta$, for any $r'>r$ such that $r' < \ord\left(f - \sum_{s \le r} f_s \exp^{-s}\right)$; in particular, $$\hh - \sum_{s \le r} (\ff_s \circ \ll_g) \gg^{-s} = o\left(\gg^{-r'}\right) \quad\text{in } \Omega.$$  On the other hand, by Case 1 of Remark \ref{small_rmk}, the germ $f_r \circ \log \circ g$ is polynomially bounded, so that $\gg^{-r'} = o\left((\ff_r \circ \ll_g) \gg^{-r}\right)$ in $\Omega$, which proves the claim.

Finally, by the inductive hypothesis I have, for $r \ge 0$, that
\begin{equation*}
T\left(\sum_{s \le r} \frac{f_s \circ \log \circ g}{g^{s}}\right) = \sum_{s \le r} \frac{T(f_s) \circ \log \circ T(g)}{T(g)^{s}} = (T(f))_r \circ \log \circ T(g).
\end{equation*}
Since $\ord\left((f_s \circ \log \circ g)g^{-s}\right) \to +\infty$ as $s \to +\infty$, we get $T(h) = T(f) \circ \log \circ T(g)$, and the theorem is proved in this case.

\subsection*{Case 2: $s_0 = 0$}  Then $l>0$ and there exists $h_0 \in \F_{l-1}$ such that $g_0 = h_0 \circ \log$. By the inductive hypothesis and  Proposition \ref{derivative_prop}, each $f^{(i)} \circ \log \circ h_0$ belongs to $\F_{k+l-1}$, so that $f^{(i)} \circ \log \circ g_0$ belongs to $\F'_{k+l}$; in particular, the series $$H:= \sum_{i \in \NN} \frac{f^{(i)} \circ \log \circ g_0}{i!} \tau_l(F_{\log} \circ \epsilon)^i$$ belongs to $\Gs{\F'_{k+l}}{E}$, where $\epsilon:= (g - g_0)/g_0$.  Based on Case 2 of Remark \ref{small_rmk}, I now claim that $\tau_{k+l}(h) = H.$  
	
To prove the claim, note first that it is clear from Case 2 of Remark \ref{small_rmk} if $f^{(n)} = 0$ for some $n \in \NN$, since the series $H$ is given by a finite sum in this case.  So assume from now on $f^{(n)} \ne 0$ for all $n$; since $\ord(F_{\log} \circ \epsilon)>0$, we have $$\ord\left((F_{\log} \circ \epsilon)^i\right) \to \infty \text{ as } i \to \infty.$$  Shrinking $\Omega$ if necessary, we may assume that $\Omega$ is also a strong $\F$-asymptotic expansion domain of $\epsilon$ and of $\log \circ g_0$, with corresponding holomorphic extensions $\ee$ and $\ll_{g_0}$, respectively.  By Lemma \ref{inf_sum_of_strong}, it therefore suffices to show that  $$\hh - \sum_{i=0}^n \frac{\ff^{(i)} \circ \ll_{g_0}}{i!} (F_{\log} \circ \ee)^i = o\left(\frac{\ff^{(n)} \circ \ll_{g_0}}{n!} (F_{\log} \circ \ee)^n\right)$$ in $\Omega$, for $n \in \NN$.  However, it follows from Corollary \ref{construction_cor}(1) that $\left|\ff^{(n+1)}(z)\right| \le e^{p|z|}$ for some $p \in \NN$ and sufficiently large $z \in \Omega$.  Also, since $T(g_0) \in \F_l'$ and $g_0$ is infinitely increasing, the leading monomial of $g_0$ belongs to $L'$, so the leading monomial of $\log \circ g_0$ is $\log_i$ for some $i \ge 1$; hence $\left|\ll_{g_0}(z)\right| \le q \log |z|$ for some $q \in \NN$ and sufficiently large $z \in \Omega$.  Finally, since $\ord(\epsilon)>0$, it follows that $|(F_{\log} \circ \ee)(z)| \le |z|^r|e^{-sz}|$ for sufficiently large $z \in \Omega$, where $s = \ord(F_{\log} \circ \epsilon)>0$ and $r \in \NN$.  Combining these three estimates with Taylor's formula, one obtaines
\begin{equation*}
	\left|\hh - \sum_{i=0}^n \frac{\ff^{(i)} \circ \ll_{g_0}}{i!} (F_{\log} \circ \ee)^i \right| \le K \left|\xx^t \bexp^{-(n+1)s}\right|
\end{equation*}	
in $\Omega$, for some $t \in \NN$ and $K>0$.  On the other hand, since $\left|\ff^{(n)}(z)\right| \ge e^{-p|z|}$ for some $p \in \NN$ and sufficiently large $z \in \Omega$, since $\left|\ll_{g_0}(z)\right| \le q\log |z|$ for some $q \in \NN$ and sufficiently large $z \in \Omega$, and since $|(F_{\log} \circ \ee)(z)| \ge |z|^{-r}|e^{-sz}|$ for sufficiently large $z \in \Omega$ for some $r \in \NN$, we have $$\left| \frac{\ff^{(n)} \circ \ll_{g_0}}{n!} (F_{\log} \circ \ee)^n \right| \ge K'\left|\xx^{-u} \bexp^{-ns}\right|$$ in $\Omega$, for some $u \in \NN$ and $K'>0$.  By Lemma \ref{strong_order_cor}, we have  $$\xx^t\bexp^{-(n+1)s} = o(\xx^{-u}\bexp^{-ns}) \quad\text{in } \Omega,$$ so the claim follows.

Finally, since $\ord(F_{\log} \circ \epsilon)^i \to \infty$ as $i \to \infty$, it follows from the inductive hypothesis, Proposition \ref{derivative_prop} and Lemma \ref{formal_taylor} that 
\begin{align*}
	T(h) &= \sigma(\tau_k(h)) \\ &= \sum_{i \in \NN} \frac{T\left(f^{(i)} \circ \log \circ g_0\right)}{i!} T(F_{\log} \circ \epsilon)^i \\ &= \sum_{i \in \NN} \frac{T(f)^{(i)} \circ \log \circ T(g_0)}{i!} F_{\log} \circ T(\epsilon)^i \\ &= T(f) \circ \left(\log \circ T(g_0) + F_{\log} \circ T(\epsilon)\right) \\ &= T(f) \circ \log \circ T(g),
\end{align*} 
so the theorem follows in this case as well. 
\end{proof}

\section{Concluding remarks}\label{conclusion_section}

As mentioned in the introduction, the purpose of this paper is to extend Ilyashenko's construction in \cite{Ilyashenko:1991fk} of the class of almost regular maps to obtain a qaa field containing them.  My reason for doing so is the conjecture that this class generates an o-minimal structure over the field of real numbers.  This conjecture, in turn, might lead to locally uniform bounds on the number of limit cycles in subanalytic families of real analytic planar vector fields all of whose singularities are hyperbolic; see \cite{Kaiser:2009ud} for explanations and a positive answer in the special case where all singularities are, in addition, non-resonant.  (For a different treatment of the general hyperbolic case, see Mourtada \cite{mourtada2009}.) 

My hope is to settle the general hyperbolic case by adapting the procedure in \cite{Kaiser:2009ud}, which requires three main steps:
\begin{enumerate}
	\item extend Ilyashenko's class $\A$ into a qaa algebra;
	\item construct such algebras in several variables, such that the corresponding system of algebras is stable under various operations (such as blowings-up, say);
	\item obtain o-minimality using a normalization procedure.
\end{enumerate}
While this paper contains a first successful attempt at Step (1), Step (2) poses some challenges.  For instance, it is not immediately obvious what the nature of logarithmic generalized power series in several variables should be; they should at least be stable under all the operations required for Step (3).  

In collaboration with Tobias Kaiser, I am taking the approach of enlarging the set of monomials itself, in such a way that this set is already stable under the required operations; a natural candidate for such a set of monomials is the set of all functions definable in the \hbox{o-minimal} structure $\Ranexp$ (see van den Dries and Miller \cite{Dries:1994eq} and van den Dries et al. \cite{Dries:1994tw}).  However, working with this large set of monomials requires us to revisit Step (1) and further adapt the construction discussed here to the corresponding generalized power series.  A joint paper (in collaboration with Tobias Kaiser and my student Zeinab Galal) addressing this generalization of Step (1) is in preparation.


\end{document}